\definecolor{blau}{rgb}{0.03,0.1,0.6}
\newcommand{\dom}{\operatorname{Dom}}
\newcommand{\id}{\mathrm{id}}
\newcommand{\cf}{\psi}
\renewcommand{\gg}{{g_\cf}} 
\newcommand{\Q}{Q}
\newcommand{\Id}{\mathrm{d}}
\newcommand{\IL}{\mathsf{L}}
\newcommand{\ICC}{\mathsf{C}}
\newcommand{\IC}{\mathbb{C}}
\newcommand{\IR}{\mathbb{R}}
\newcommand{\IHH}{\mathscr{H}}
\newcommand{\IJJ}{\mathscr{J}}
\newcommand{\ILL}{\mathscr{L}}
\newcommand{\IDD}{\mathscr{D}}
\newcommand{\IMM}{\mathscr{M}}
\newcommand{\z}{\mu}
\newcommand{\IT}{\mathrm{T}}
\newcommand{\ITs}{{\mathrm{T}^*\!}}
\newcommand{\exte}{\mathrm{ext}}
\newcommand{\inti}{\mathrm{int}}
\newcommand{\IN}{\mathbb{N}}
\newcommand{\question}[1]
{\leavevmode{\marginpar{\tiny
$\hbox to 0mm{\hspace*{-0.5mm}$\leftarrow$\hss}
\vcenter{\vrule depth 0.1mm height 0.1mm width \the\marginparwidth}
\hbox to 0mm{\hss$\rightarrow$\hspace*{-0.5mm}}$\\\relax\raggedright #1}}}
\DeclareMathOperator{\supp}{supp}
\newcommand{\Reell}{\mathbb{R}}
\newcommand{\vol}{\mathrm{vol}}
\newcommand{\scr}[2]{{\left(#1,#2\right)}}
\newcommand{\sce}[2]{{\left\langle #1,#2\right\rangle}}
\newtheorem{theorem}{Theorem}[section]
\newtheorem{definition}[theorem]{Definition}
\newtheorem{lemma}[theorem]{Lemma}
\newtheorem{corollary}[theorem]{Corollary}
\newtheorem{proposition}[theorem]{Proposition}
\newtheorem{remark}[theorem]{Remark}
\DeclareMathOperator{\inj}{inj}
\newcommand{\myinf}[1]{\inf{}_{\!#1}}
\def\yWedge{\mathchoice{\textstyle\bigwedge}{\bigwedge}{\bigwedge}{\bigwedge}}
\def\xWedge^#1{\yWedge^{\!\!#1}}
\def\Wedge{\@ifnextchar^{\xWedge}{\xWedge^{\,}}}
\newcommand{\slim}{\mathop{s\rule[0.5ex]{1ex}{0.1ex}\mathrm{lim}}}
\newenvironment{introenv}[2]{\medskip\textbf{#1~\ref{#2}.}\par\begin{it}}{\end{it}\medskip}
\title[Conformal Scattering]{Scattering theory of the Hodge-Laplacian under a conformal perturbation}
\author[F.~Bei]{Francesco Bei}
\address{Humboldt-Universit\"at zu Berlin,
Rudower Chaussee 25,
Institut f\"ur Mathematik,
12489 Berlin, Germany} \email{bei@math.hu-berlin.de}
\author[B.~G\"uneysu]{Batu G\"uneysu}
\address{Humboldt-Universit\"at zu Berlin,
Rudower Chaussee 25,
Institut f\"ur Mathematik,
12489 Berlin, Germany} \email{gueneysu@math.hu-berlin.de}
\author[J.~M\"uller]{J\"orn M\"uller}
\address{Humboldt-Universit\"at zu Berlin,
Rudower Chaussee 25,
Institut f\"ur Mathematik,
12489 Berlin, Germany} \email{jmueller@math.hu-berlin.de}
\begin{document}

\begin{abstract} Let $g$ and $\tilde{g}$ be Riemannian metrics on a noncompact manifold $M$, which are conformally equivalent. We show that under a very mild \emph{first order} control on the conformal factor, the wave operators corresponding to the Hodge-Laplacians $\Delta_g$ and $\Delta_{\tilde{g}}$ acting on differential forms exist and are complete. We apply this result to Riemannian manifolds with a bounded geometry and more specifically, to warped product Riemannian manifolds with a bounded geometry. Finally, we combine our results with some explicit calculations by Antoci to determine the absolutely continuous spectrum of the Hodge-Laplacian on $j$-forms for a large class of warped product metrics.
\end{abstract}

\maketitle
\section*{Introduction}
\noindent One of the most fundamental problems in geometry is the determination of the spectrum of the Laplace operator corresponding to a Riemannian manifold $(M,g)$. Here, one is particularly interested in the Hodge-Laplace operator $\Delta^{(j)}_g$ which acts on differential $j$-forms, as the latter is directly linked to the de Rham complex, thus the topology of $M$. If $M$ is compact, then the spectrum $\sigma(\Delta^{(j)}_g)$ of $\Delta^{(j)}_g$ consists of eigenvalues with a finite multiplicity and thus the situation is rather simple. On the other hand, if $M$ is noncompact, then $\sigma(\Delta^{(j)}_g)$ usually contains some continuous part, which cannot be controlled in general, that is, without any further assumptions on $(M,g)$.

A systematic approach to control the \emph{absolutely continuous} part $\sigma_{\mathrm{ac}}(\Delta^{(j)}_g)$ of $\sigma(\Delta^{(j)}_g)$ in the noncompact case is directly motivated by quantum mechanics, namely, the usage of scattering theory. Here the essential idea is as follows: Assume that there is a quasi-isometric metric $\tilde{g}$ on $M$ such that we have some good information about the absolutely continuous part $(\Delta^{(j)}_{\tilde{g}})_{\mathrm{ac}}$ of $\Delta^{(j)}_{\tilde{g}}$. Then once we can show that the wave operators
$W_{\pm}\big(H_{g},H_{\tilde{g}}\big)$ exist and are complete (cf. Theorem \ref{belo} for a precise definition of completeness), they induce unitary equivalences 
$$
(\Delta^{(j)}_{\tilde{g}})_{\mathrm{ac}}\sim (\Delta^{(j)}_{g})_{\mathrm{ac}},\>\text{ in particular, one has }\>\sigma_{\mathrm{ac}}(\Delta^{(j)}_{\tilde{g}})=\sigma_{\mathrm{ac}}(\Delta^{(j)}_g).
$$
Now in order to actually carry through the above program, a typical approach has been to assume that $M$ has a special topological structure and that both metrics $g$, $\tilde{g}$ are in some sense compatible with the latter, e.g.~in the situation of manifolds with cylindrical ends or cusp ends, see in particular \cite{gui} and more recently \cite{huns}. For further references we refer to the extensive literature cited in \cite{HPW}. This approach ultimately leads to the study of direct sums of Sturm-Liouville type operators, which is of course a classical and well-understood field.

A major new development in the scattering approach to spectral geometry has been the paper \cite{MS}, where the authors allow \emph{arbitrary} Riemannian manifolds. There the authors consider Laplacians acting on functions, that is $0$-forms, and their main result can be rephrased as follows (cf. Theorem 0.1 \cite{MS}), where from now on we assume $\dim(M)\geq 2$:

\medskip
\noindent\emph{Assume that $g$, $\tilde{g}$ are complete Riemannian metrics in $M$ with $|\mathrm{Sec}_{g}|, |\mathrm{Sec}_{\gg}|\leq L$ for some $L>0$, such that the covariant $\mathsf{C}^2$-deviation ${}^2|g-\tilde{g}|_g$ of $g$ from $\tilde{g}$ is bounded pointwise from above by a function $\beta: M\to (0,\infty)$ of moderate decay (in particular $g$ and $\tilde{g}$ are quasi-isometric), in a way such that for appropriate constants $a,b,c,C$ one has
$$
\beta^{a}\in\IL^1(M,g),\> \bigl|\beta^b(x){\widetilde{\inj}}_g(x)^{c}\bigr|\leq C\>\text{ for all $x$},
$$
where 
$$
\widetilde{\inj}_g(x):=
\min\bigl\{  \tfrac{\pi}{12 \sqrt{L}} ,\:  \inj_g(x) \bigr\}.
$$
Then the wave operators $W_{\pm}\big(\Delta^{(0)}_{g},\Delta^{(0)}_{\tilde{g}}\big)$ exist and are complete.}
\medskip

On the other hand, this scalar result has been generalized recently in \cite{HPW}, using harmonic radius estimates on the Sobolev scale from \cite{anderson}: There, using a certain decomposition formula (cf. Lemma 3.4 in \cite{HPW}) of the operator
\begin{align}\label{posts}
V^{(0)}=\big(\Delta^{(0)}_{\tilde{g}}+1\big)^{-n} \big(\Delta^{(0)}_{\tilde{g}}  - \Delta^{(0)}_g \big)\big(\Delta^{(0)}_{g}+1\big)^{-n},
\end{align}
the authors prove (cf. Theorem 3.7 in \cite{HPW}) that the assumptions of Belopol\rq{}skii-Birman\rq{}s theorem (cf. Theorem \ref{belo} below) are satisfied under an integrability condition of the form
\begin{align}\label{intpos}
\int_{M} \Id(g,\tilde{g})(x) h^{-(\dim(M)+2)}(x)\mathrm{vol}_g(\Id x)<\infty,
\end{align}
where $\Id(g,\tilde{g}):M\to (0,\infty)$ is a  function which only measures a \emph{zeroth order deviation} of the metrics (and not a \emph{second order} one), and where $h: M\to (0,1]$ is an arbitrary common lower bound on both Sobolev-harmonic radii $r_{g}, r_{\tilde{g}}$. Ultimately, the authors of \cite{HPW} end up with condition (\ref{intpos}), by using generally valid elliptic estimates of the form 
\begin{subequations}
\begin{align}
\bigl|\big(\Delta^{(0)}_{g}+1\big)^{-n} &f(x)\bigr|\leq C\min\{1,r_g(x)\}^{-\dim(M)/2}\|f\|_{\mathsf{L}^2(M,g)},\label{ellest1}\\
\bigl|\Id\big(\Delta^{(0)}_{g}+1\big)^{-n}&f(x)\bigr|_g \leq C\min\{1,r_g(x)\}^{-(\dim(M)/2 +1)}\|f\|_{\mathsf{L}^2(M,g)}, \label{ellest2}
\end{align}
\end{subequations}
where $n$ is large enough, in order to estimate the trace norm of $V^{(0)}$.

\vspace{3ex}

As these are all \emph{scalar results for functions}, the natural question which we address in this paper is: 

\medskip
\noindent\emph{To what extent can one prove a scattering result for the Hodge-Laplacian $\Delta^{(j)}_{*}$ on $j$-forms, which only requires a lower order control on the deviation of the metrics?}
\medskip

To this end, in order to make an effective use of Belopol\rq{}skii-Birman\rq{}s theorem as in \cite{HPW}, we restrict ourselves to the particularly important case of conformal perturbations. Ultimately, the restriction to conformal perturbations turns out to be not restrictive at all for many applications, as e.g. any two sufficiently well-behaved warped product metrics automatically are \lq\lq{}essentially conformally equivalent\rq\rq{} (see the proof of Proposition \ref{warpconfper} below for a precise statement).\\
In order to formulate our main results, we fix a Riemannian metric $g$ on $M$. If $\tilde g$ is another metric on $M$ which is quasi-isometric to $g$, then we denote with
$$
I=I_{g,\tilde g}:\Omega_{\IL^2}(M,g)\longrightarrow\Omega_{\IL^2}(M,\tilde g),\>\omega\longmapsto \omega
$$
the canonical identification operator. Let $\cf:M\to\Reell$ be smooth, so that the conformally equivalent metric $\gg:=\mathrm{e}^{2\cf} g$ is quasi-isometric to $g$, if and only if $\cf$ is bounded.

For any $K>0$ and any function $h:M\to (0,\infty)$, we introduce the following notation: $\IMM_{K,h}(M)$ stands for the space of complete metrics $g\rq{}$ on $M$ with $\min\{1,r'_g\}\geq h$, and with curvature endomorphism bounded from below by $-K$. 

Note that this definition is clearly motivated by the elliptic estimates \eqref{ellest1}, \eqref{ellest2}. Furthermore, given a Borel function $h:M\to (0,\infty)$, the conformal factor $\cf$ will be called an \emph{$h$-scattering perturbation of $g$,} if 
\begin{align}\label{inrt}
\int_{M} \Id(g,\cf)(x) h^{-(\dim(M)+2)}(x)\mathrm{vol}_g(\Id x)<\infty,
\end{align}
where now
\begin{align}\label{firts}
\Id(g,\cf)(x):=\max\bigl\{\sinh(2|\cf(x)|), \left|\Id\cf(x)\right|_{g}\bigr\}, \quad x\in M.
\end{align}
Then with $\Delta=\bigoplus_j\Delta^{(j)}$ the total Hodge-Laplacian, our main result reads as follows:

\begin{introenv}{Theorem}{main}
Let $\cf:M\to \IR$ be smooth with $\cf,|\Id \cf|_g $ bounded, and assume that $g,\gg\in\IMM_{K,h}(M)$ for some pair $(K,h)$, in a way such that $\cf$ is an $h$-scattering perturbation of $g$. Then the wave operators $W_{\pm}(\Delta_{\gg},\Delta_g, I)$ exist and are complete. Moreover, the $W_{\pm}\big(\Delta_{\gg},\Delta_{g}, I\big)$ are partial isometries with initial space $\mathrm{Im}\:P_{\mathrm{ac}}(\Delta_g)$ and final space $\mathrm{Im}\:P_{\mathrm{ac}}(\Delta_{\gg})$.
\end{introenv}

\noindent It is straightforward to check that this theorem applies to the case of arbitrary compactly supported perturbations (see Corollary \ref{cor:4.1}). Morover, combining this Theorem \ref{main} with a result from \cite{bunke} we get the following result, which states that under slightly stronger curvature assumptions, we can drop the conformal equivalence on a compact set:

\begin{introenv}{Corollary}{cor:bunke}
Let $(M,g)$ and $(M,\tilde g)$ be conformal at infinity, i.e.~there are a compact set $K\subset M$ and a smooth function $\cf: M\to \Reell$ such that $\tilde g=\mathrm{e}^{2\cf} g$ on $M\setminus K$. Assume that $\cf$, $|\Id\cf|_g$ are bounded, that $\mathrm{Sec}_g$ is bounded, and that $g,\gg\in\IMM_{L,h}(M)$ for some pair $(L,h)$, in a way such that $\cf$ is a $h$-scattering perturbation of $g$. \\
Then the wave operators 
$W_{\pm}(\Delta_{\tilde g},\Delta_g, I)$ exist and are complete; moreover they are partial isometries with inital space $\mathrm{Im} \: P_{\mathrm{ac}}(\Delta_g)$ and final space $\mathrm{Im} \: P_{\mathrm{ac}}(\Delta_{\tilde g})$.\end{introenv}

Corollary \ref{main2} in Section \ref{sec:3} below states that Theorem \ref{main} also holds in every differential form degree. Moreover when restricted to $0$-forms, it is still more general than the above mentioned Theorem 0.1 from \cite{MS} when applied to the conformal case. This follows from:

\begin{introenv}{Proposition}{ms}
Assume that $\cf:M\to \IR$ is a smooth bounded function, that $g$ is complete such that $|\mathrm{Sec}_{g}|, |\mathrm{Sec}_{\gg}|\leq L$ for some $L>0$, and furthermore that there is a function $\beta$ which is exponentially bounded from below (see Definition \ref{expbnd}), such that the following conditions are satisfied:
\begin{enumerate}[(i)]
\item For some constant $C>0$ one has ${}^1|g-\gg|\leq C\cdot \beta $.
\item There are constants $b\in (0,1)$ with $\beta^{b}\in\IL^1(M,g)$, and $C_1>0$ such that for all $x\in M$,
\begin{equation*}
\widetilde{\inj}_g(x)\ge C_1 \cdot \beta(x)^{\frac{1-b}{\dim(M)+2}}.
\end{equation*}
\end{enumerate}
Then the assumptions of Theorem \ref{main} are satisfied.
\end{introenv}

We also have the following consequence of Theorem \ref{main}:

\begin{introenv}{Corollary}{ffgg} Let $g$ be such that $|\mathrm{Sec}_g|$ is bounded and that $g$ has a positive injecitivity radius (in particular, $g$ is complete). Assume that $\psi:M\to \IR$ is smooth with $\max\{\psi,|\Id\psi|_g,|\mathrm{Hess}_g(\psi)|_g\}$ bounded, and
$$
\int_M  \max\{\sinh(2|\psi(x)|),|\Id\psi(x)|_g\}\mathrm{vol}_g(\Id x) <\infty.
$$
Then the wave operators $W_{\pm}(H_{\gg},H_g, I)$ exist and are complete. 
\end{introenv}

Corollary \ref{ffgg} can be brought into a very applicable form in the case of warped product metrics. Ultimately, as indicated above, we are going to use our scattering results together with results by Antoci \cite{antoci} to control the absolutely continuous $j$-form spectrum for a large class of warped product metrics, from the knowledge of the spectrum of \emph{one special} warped product metric. These facts are included in Section \ref{sec:4}.\\
The reader should notice that in all these results our assumptions on the deviation of the metrics are purely \emph{first order} ones.\\

\medskip
Let us add some remarks on the technical issues of the assumptions, and the proof of Theorem \ref{main}, which also indicate in what sense the case of differential forms is analytically much more involved than the case of functions. 

An effective use of a decomposition formula as (\ref{posts}) which reflects elliptic estimates such as \eqref{ellest2}, requires the underlying operators to be of the form $D^*\!D$. Thus we are led to work with total differential forms and not with forms of a fixed degree, so that we can use the underlying Dirac structure $\Delta_{g}=D_g^*D_g=D^2_g$, where $D_g$ is the Gauss-Bonnet operator. However, $D_g=\Id+\delta_g$ depends itself on $g$, while on functions it is just the differential $\Id$. Ultimately, this is the reason that now we have to require a first order control in the definition (\ref{firts}), which cannot be expected to be dropped. More specifically, in this setting the generalization of the decomposition formula for \eqref{posts} takes the following form:\vspace{2mm}

\begin{introenv}{Proposition}{decompo} Let $g$ be complete and let $\cf,|\Id \cf|_g$ be bounded. Then for $\lambda>0$, $n\geq 1$, the bounded operator
$$
V:=R_{\gg,\lambda}^n (\Delta_{\gg} I -I \Delta_g )R_{g,\lambda}^n:\Omega_{\IL^2}(M,g)\longrightarrow\Omega_{\IL^2}(M,\gg)
$$ 
can be decomposed as
\begin{multline*}
V= R_{\gg,\lambda}^n
\Big(
D_{\gg} \cdot 2\sinh(2\cf) I D_g 
+D_{\gg} I (1-\mathrm{e}^{-2\cf}) \Id -\Id \circ (1-\mathrm{e}^{2\cf}) I D_g \\ 
+D_{\gg}  \inti_{\gg}(\Id\cf)\,\tau\: I 
- \tau \:\inti_g(\Id\cf) D_g
\Big)R_{g,\lambda}^n,
\end{multline*}
where $R_{*,\lambda}:=(\Delta_*+\lambda)^{-1}$ denotes the resolvent, and where $\tau$ is multiplication by a constant in each degree.
\end{introenv}

Indeed, it is essential in the latter result to assume that $|\Id \cf|_g$ is bounded, already to make the right hand side of the formula for $V$ well-defined at all.

Next, we remark that in order to estimate the trace norm of the operator $V$ in terms of the quantitity \eqref{inrt}, the approach from \cite{HPW} would require \emph{first order} estimates as in \eqref{ellest2}, but now for $AR_{g,\lambda}^{n}$, where $A\in \{D_g, \Id, \delta_g\}$. Such estimates seem hard to establish in general. Instead, we take a different approach which relies on the commutator relations $[A, R^{n}_{g,\lambda}]=0$, and which allows us to restrict ourselves to the differential form analogue of the \emph{zeroth order} estimate \eqref{ellest1}. This is the content of:

\begin{introenv}{Proposition}{estim}
Assume that $g\in \IMM_{K,h}(M)$ for some pair $(K,h)$. Then for all sufficiently large $n=n(\dim(M))\in\IN$ there is a $C=C(n,\dim(M))>0$, such that for all sufficiently large $\lambda=\lambda(K,m)>0$ the operator $R^n_{g,\lambda}$ is an integral operator, with a Borel integral kernel 
$$
M\times M\ni (x,y)\longmapsto R^n_{g,\lambda}(x,y)\in \mathrm{Hom}(\Wedge^j \IT^*_y M,\Wedge^j \IT^*_x M)
$$
which satisfies
$$
\int_M\left|R^n_{g,\lambda}(x,y)\right|^2_{\IJJ^2}\vol_g(\Id y)\leq C\cdot h(x)^{-\dim(M)}\text{ for all $x\in M$},
$$
where $|\cdot|_{\IJJ^2}$ stands for the Hilbert-Schmidt norm on the fibers $\mathrm{Hom}(\Wedge^j \ITs_y M,\Wedge^j \ITs_xM)$ (w.r.t.~$g$).
\end{introenv}

\vspace{3ex}
\noindent This paper is organized as follows: In Section \ref{sec:1} we establish some geometric and functional analytic notation, and we provide the reader with some formulae from conformal geometry. In Section \ref{sec:2} we prove and collect some facts on Sobolev harmonic coordinates and the class of metrics $\IMM_{K,h}(M)$. Section \ref{sec:3} is devoted to the proofs of the above Proposition \ref{decompo}, Proposition \ref{estim}, as well as our main result Theorem \ref{main}. Finally,  Section \ref{sec:4} contains the above applications Corollary \ref{cor:bunke},  Proposition \ref{ms}, and Corollary \ref{ffgg}, as well as some explicit applications of Corollary \ref{ffgg} (such as warped product Riemannian manifolds and the above mentioned determination of absolutely continuous $j$-form spectra of warped product metrics).

\vspace{4mm}

\noindent\textbf{Acknowledgements.} The authors would like to thank Jochen Br\"uning for a helpful discussion. We would also like to thank the anonymous referee for very helpful hints that ultimately lead to the formulation of Proposition \ref{warpconfper}. This research has been financially supported by the SFB 647: Raum-Zeit-Materie.

\section{Setting and some facts from conformal Riemannian geometry}\label{sec:1}

Let $M$ be a connected smooth manifold without boundary, with $m:=\dim(M)\geq 2$. The tangent bundle $\IT M$ and all bundles that can be constructed in a smooth functorial way out of it will be considered as complexified, like for example the exterior product $\Wedge^j \ITs M$ and the full exterior bundle $\Wedge\ITs M=\bigoplus^m_{j=0} \Wedge^j \ITs M$, with the usual convention $\Wedge^0 \ITs M:=M\times\IC$. Given smooth complex vector bundles $E_1\to M$, $E_2\to M$, the complex linear space of smooth linear partial differential operators from $E_1$ to $E_2$ of order $\leq k\in\IN_{\geq 0}$ is denoted with $\IDD^{(k)}_{\ICC^{\infty}}(M;E_1,E_2)$, where we write $\IDD^{(k)}_{\ICC^{\infty}}(M;E_1)$ instead of $\IDD^{(k)}_{\ICC^{\infty}}(M;E_1,E_1)$. If nothing else is said, given $P\in \IDD^{(k)}_{\ICC^{\infty}}(M;E_1,E_2)$, $f\in\Gamma_{\IL^1_{\mathrm{loc}}}(M,E_1)$, the expression  $Pf$ is always understood in the distributional sense. For $\alpha\in \Omega^1_{\ICC^{\infty}}(M)$ we denote with 
$$
\exte(\alpha)\in \IDD^{(0)}_{\ICC^{\infty}}\left(M;\Wedge \ITs M\right)
$$
the operator of exterior multiplication with $\alpha$.

\emph{All Riemannian metrics on $M$ are understood to be smooth, and we fix once for all a Riemannian metric $g$ on $M$}. 

The metric is extended canonically to a Hermitian structure on all vector bundles  $E\to M$ that can be constructed in a \lq\lq{}smooth functorial way\rq\rq{} from $\IT M$ (like e.g. $E=\Wedge^j \IT^* M$), and this Hermitian structure will always be denoted by $(\cdot,\cdot)_g$, where then
\begin{align}\label{norm}
|\psi|_{g}:=\scr{\psi}{\psi}_{g}^{1/2}\>\text{ for any section $\psi$ in $E\to M$.}
\end{align}
denotes the corresponding fiber norm. Likewise, the Levi-Civita connection $\nabla_g$ extends to all such bundles to give a Hermitian covariant derivative. In the particular case of $E=\Wedge^j \IT^* M$ we will sometimes indicate the corresponding data by an index \lq\lq{}$j$\rq\rq{}, like e.g. $\nabla_{g,j}$, or $(\cdot,\cdot)_{g,j}$. For example, the Hessian of a smooth function $f:M\to\IC$ becomes $\mathrm{Hess}_g(f)=\nabla_{g,1}\Id f$.\\
We denote with $\z_g$ the Riemannian Borel measure on $M$, and with
$$
\Q_g\in\IDD^{(0)}_{\ICC^{\infty}}(M;\Wedge^2 \IT M)
$$
its curvature endomorphism, and with $\mathrm{Sec}_g$ the sectional curvature.\\
Recall that if $R_g$ stands for the usual Riemannian curvature, then $\Q_g$ is self-adjoint and determined by the equation
$$
\big(\Q_g (X\wedge Y), Z\wedge W\big)_{g}= (R_g(X,Y)W,Z)_g
$$
for all smooth vector fields $W,X,Y,Z$ on $M$ .\\
Moreover, $\inj_{g}(x)\in (0,\infty]$ stands for the $g$-injectivity radius at $x\in M$, $\Id_g(x,y)$ the geodesic distance, and the corresponding open geodesic balls will be denoted with $B_{g}(x,r)$, $r>0$, $x\in M$.\\
We will denote by $\Omega_{\IL^2}(M,g)$ the complex separable Hilbert space space of equivalence classes $\alpha$ of Borel forms on $M$ such that 
\begin{align*}
\left\|\alpha\right\|^2_{g}:=&\int_M|\alpha(x)|^2_{g} \:\z_g(\Id x) <\infty, \\
\text{ with its inner product }\quad\sce{\alpha}{\beta}_{g}=&\int_M \scr{\alpha(x)}{\beta(x)}_{g}  \:\z_g (\Id x),
\end{align*}
with an analogous notation for the Hilbert space of Borel $j$-forms $\Omega^j_{\IL^2}(M,g)$. In view of $\Wedge\ITs M=\bigoplus^m_{j=0}\Wedge^j\ITs M$, we also have $\Omega_{\IL^2}(M,g)=\bigoplus^m_{j=1}\Omega^j_{\IL^2}(M,g)$.\\
For any smooth $1$-form $\alpha$ on $M$, we get the formal adjoint corresponding to exterior muliplication with $\alpha$,
$$
\inti_{g}(\alpha):= \exte(\alpha)^{\dagger_g}\in \IDD^{(0)}_{\ICC^{\infty}}\left(M;\Wedge \ITs M\right),
$$
which is in fact nothing but contraction by the vector field that corresponds to $\alpha$ via $g$. Let us note (recalling the convention (\ref{norm})):

\begin{lemma}\label{inn} For any $\eta\in \Omega^1_{\ICC^{\infty}}(M),\omega\in \Omega_{\ICC^{\infty}}(M)$ one has the pointwise inequality
\begin{align}
|\inti_g(\eta)\omega|_g\leq |\eta|_{g}|\omega|_g,
\end{align}
in particular, as an operator on $\Omega_{\IL^{2}}(M)$, the norm of contraction with a one-form is bounded by
\[
\|\inti_g(\eta) \|_g \le \|\eta\|_{g,\infty}:=\sup_{x\in M}|\eta(x)|_{g}\in [0,\infty].
\]
\end{lemma}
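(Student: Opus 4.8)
The plan is to prove the pointwise inequality $|\inti_g(\eta)\omega|_g\leq |\eta|_g|\omega|_g$ and then deduce the operator-norm bound by integrating the square of this inequality over $M$. The pointwise statement is purely fiberwise, so I would fix a point $x\in M$ and work entirely in the fiber $\Wedge\ITs_xM$ equipped with the Hermitian inner product $(\cdot,\cdot)_g$. Since $\inti_g(\eta)$ is by definition the formal adjoint $\exte(\eta)^{\dagger_g}$ of exterior multiplication, and exterior multiplication and contraction are both zeroth-order operators acting fiberwise, the inequality reduces to a statement in linear algebra on a single finite-dimensional Hermitian vector space.

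First I would exploit the fact that everything is $\IC$-bilinear and fiberwise, so I may choose, at the fixed point $x$, a $g$-orthonormal coframe $e^1,\dots,e^m$ of $\ITs_xM$ adapted to $\eta$: if $\eta(x)\neq 0$ I normalize so that $\eta(x)=|\eta(x)|_g\, e^1$, and if $\eta(x)=0$ the inequality is trivial. With respect to the induced orthonormal basis of $\Wedge\ITs_xM$ given by the $e^I=e^{i_1}\wedge\cdots\wedge e^{i_k}$ for increasing multi-indices $I$, contraction $\inti_g(\eta)=|\eta(x)|_g\,\inti_g(e^1)$ acts by deleting the index $1$ from those $e^I$ containing it (up to a sign) and annihilating those that do not. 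Writing $\omega=\sum_I \omega_I\, e^I$, the components of $\inti_g(e^1)\omega$ are a subset of the components $\omega_I$ (those with $1\in I$), each appearing exactly once and with no others mapping to the same target basis vector. Hence $|\inti_g(e^1)\omega|_g^2=\sum_{I:\,1\in I}|\omega_I|^2\leq\sum_I|\omega_I|^2=|\omega|_g^2$, which gives $|\inti_g(\eta)\omega|_g\leq|\eta(x)|_g|\omega|_g$ at the point $x$; since $x$ was arbitrary this is the asserted pointwise inequality.

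For the operator-norm statement I would square the pointwise inequality, integrate against $\z_g$, and estimate $|\eta(x)|_g\leq\|\eta\|_{g,\infty}$ pointwise, obtaining
\[
\|\inti_g(\eta)\omega\|_g^2=\int_M|\inti_g(\eta)\omega(x)|_g^2\,\z_g(\Id x)\leq \|\eta\|_{g,\infty}^2\int_M|\omega(x)|_g^2\,\z_g(\Id x)=\|\eta\|_{g,\infty}^2\|\omega\|_g^2,
\]
valid for all $\omega\in\Omega_{\ICC^\infty}(M)$; taking square roots and passing to the operator norm over a dense subspace of $\Omega_{\IL^2}(M)$ yields $\|\inti_g(\eta)\|_g\leq\|\eta\|_{g,\infty}$.

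The main point requiring care — though it is elementary — is the fiberwise combinatorial step: verifying that contraction with a unit covector really maps the orthonormal monomial basis into an orthonormal set of monomials without collision, so that the sum of squared coefficients only shrinks. The sign factors from reordering wedge products do not affect absolute values, so they are harmless; the only genuine content is that distinct multi-indices $I$ containing $1$ yield distinct multi-indices $I\setminus\{1\}$, which is immediate. I do not expect a serious obstacle, as both operators are bounded of order zero and the estimate is saturated precisely when $\omega$ is supported on monomials containing the direction dual to $\eta$.
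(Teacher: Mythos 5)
Your proof is correct, but it takes a genuinely different route from the paper's. The paper argues coordinate-free via the Clifford-type anticommutation identity: using that contraction is an anti-derivation and that $\inti_g(\eta)\eta=|\eta|_g^2$, one gets the exact pointwise identity
\begin{equation*}
|\inti_g(\eta)\omega|_g^2=|\eta|_g^2|\omega|_g^2-|\exte(\eta)\omega|_g^2,
\end{equation*}
from which the inequality is immediate; your argument instead fixes a point, adapts an orthonormal coframe to $\eta$, and checks combinatorially that contraction with a unit covector maps the monomial basis into orthonormal monomials without collision. The paper's identity is slightly stronger output for the same effort (it exhibits the exact defect term $|\exte(\eta)\omega|_g^2$, hence also characterizes equality), and it avoids any choice of frame; your computation is more elementary and makes the equality case visible concretely ($\omega$ supported on monomials containing the direction dual to $\eta$). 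One small caveat for your version: the paper complexifies all bundles, so $\eta\in\Omega^1_{\ICC^\infty}(M)$ may be complex-valued, and then $\eta(x)$ need not be a positive multiple of a \emph{real} covector; the fix is to take $e^1,\dots,e^m$ to be a unitary basis of the complexified fiber $\ITs_xM$ with $e^1=\eta(x)/|\eta(x)|_g$. Since $\inti_g(\eta)$ is by definition the Hermitian adjoint of $\exte(\eta)$, the deletion description of its action on monomials of a unitary basis still holds, so your argument goes through verbatim after this adjustment (and in the paper's actual applications $\eta=\Id\cf$ is real anyway). The final integration step deducing the operator-norm bound is the same in both proofs.
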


\begin{proof} We omit the dependence on $g$ of several data in the notation. Because contraction is an anti-derivation, the pointwise equality
\begin{align*}
|\inti(\eta)\omega|^2
&=\scr{\exte(\eta)\inti(\eta)\omega}{\omega}
=\scr{(\inti(\eta)\eta) \omega}{\omega}-\scr{\inti(\eta)\exte(\eta) \omega}{\omega} \\
&
=|\eta|^2 |\omega|^2-|\exte(\eta) \omega|^2
\end{align*}
holds. This shows the first statement, and the second statement then follows from
\[
\big|[\inti(\eta)\omega](x)\big|\le \|\eta\|_{\infty} |\omega(x)|.\qedhere
\]
\end{proof}

\vspace{2ex}

We denote by 
\begin{align*}
& \Id^{(j)}\in\IDD^{(1)}_{\ICC^{\infty}}\bigl(M;\Wedge^j \ITs M,\Wedge^{j+1} \ITs M\bigr),\>\>\delta^{(j)}_g\in\IDD^{(1)}_{\ICC^{\infty}}\bigl(M;\Wedge^{j} \ITs M,\Wedge^{j-1} \ITs M\bigr)
\end{align*}
the exterior differential on $j$-forms and, respectively, the formal adjoint of $\Id^{(j-1)}$. Then we can form the Hodge-Laplacian
$$
\Delta^{(j)}_g:=\delta^{(j+1)}_g\Id^{(j)}+\Id^{(j-1)}\delta^{(j)}_g\in\IDD^{(2)}_{\ICC^{\infty}}\bigl(M;\Wedge^{j} \ITs M\bigr),
$$
whose Friedrichs realization in $\Omega^j_{\IL^2}(M,g)$ will be denoted with $H^{(j)}\geq 0$. With 
\begin{align*}
\Id:=\bigoplus^m_{j=0}\Id^{(j)},\>\delta_g:=\bigoplus^m_{j=0}\delta^{(j)}_g\in\IDD^{(1)}_{\ICC^{\infty}}\left(M;\Wedge \ITs M\right)
\end{align*}
we get the underlying Dirac type operator, and respectively the total Hodge Laplacian 
\begin{align*}
D_g:=\Id+\delta_g\in\IDD^{(1)}_{\ICC^{\infty}}\left(M;\Wedge \ITs M\right),\>\>\Delta_g:=D_g^2\in\IDD^{(2)}_{\ICC^{\infty}}\left(M;\Wedge \ITs M\right),
\end{align*}
where the Friedrichs realization of $\Delta_g$ in $\Omega_{\IL^2}(M,g)$ will be denoted with $H_g\geq 0$. In view of
$$
\Delta_g=\bigoplus^m_{j=0} \Delta^{(j)}_g, \>\text{ we also have }\> H_g=\bigoplus^m_{j=0}  H^{(j)}_g\>\text{  as self-adjoint operators}.
$$
If $g$ is (geodesically) complete, then $D_g$, $\Delta_g$ and $\Delta_g^{(j)}$ are essentially self-adjoint on the corresponding space of smooth compactly supported forms \cite{gromov,strich}. For $\lambda>0$, we denote the resolvents with
\begin{align*}
R^{(j)}_{g,\lambda}:=(H^{(j)}_g+\lambda)^{-1}\in\ILL(\Omega^j_{\IL^2}(M,g)),\>\>R_{g,\lambda}:=(H_g+\lambda)^{-1}\in\ILL(\Omega_{\IL^2}(M,g)).
\end{align*}

Finally, let $\mathcal{Q}_g$ denote the sesqui-linear form quadratic form corresponding to $H_g$: It is the closure of the form given by 
$$
(\alpha,\beta)\longmapsto \int_M (D_g\alpha(x), D_g\beta(x))_g \:\mu_g(\Id x), \>\>(\alpha,\beta)\in\Omega_{\ICC^{\infty}_{\mathrm{c}}}(M)\times \Omega_{\ICC^{\infty}_{\mathrm{c}}}(M),
$$
and by functional analytic facts one always has $\dom(\mathcal{Q}_g)=\dom(\sqrt{H_g})$. An observation that will be essential for us in the sequel is that the commutator of $D_g$ and a, say, smooth function $f$ on $M$ is given in terms of the underlying Clifford multiplication, namely,
\begin{equation}\label{diracf}
[D_g,f]=\mathrm{c}_g(\Id f):=\exte(\Id f)-\inti_g(\Id f)\in\IDD^{(0)}_{\ICC^{\infty}}\left(M;\Wedge \ITs M\right),
\end{equation}
which is ultimately equivalent to saying that $D_g$ is of Dirac type \cite{bgv}. Given a smooth function $\cf$ on $M$ we define 
\begin{align*}
\tau&:=\bigoplus^m_{j=0}(m-2j)1_{\Wedge^j \ITs M}\in \IDD^{(0)}_{\ICC^{\infty}}(M;\Wedge \ITs M),\\
\mathrm{e}^{\cf\tau}&:=\bigoplus^m_{j=0}\mathrm{e}^{(m-2j)\cf}1_{\Wedge^j \ITs M}\in \IDD^{(0)}_{\ICC^{\infty}}(M;\Wedge \ITs M).
\end{align*}

If $\tilde g$ a quasi-isometric metric, then we denote with
$$
I=I_{g,\tilde g}:\Omega_{\IL^2}(M,g)\longrightarrow\Omega_{\IL^2}(M, \tilde g),\>\omega\longmapsto \omega
$$
the canonical identification operator. Given a smooth function $\cf:M\to\IR$, we define another metric $\gg:=\mathrm{e}^{2\cf}g$, noting that $g$ and $\gg$ are quasi-isometric, if and only if $\cf$ is bounded. We will frequently use the following results for conformal perturbations:

\begin{proposition}\label{conf} Let $\cf:M\to\IR$ be smooth.\vspace{-1ex}
\begin{enumerate}[a)]
\item One has
\begin{subequations}
\begin{align}\allowdisplaybreaks
\scr{\cdot}{\cdot}_{\gg,j} &= \mathrm{e}^{-2j\cf}\scr{\cdot}{\cdot}_{g,j}\>\text{ for any $j\in \{0,\dots,m\}$,}\label{faser}\\
\mu_{\gg}&=\mathrm{e}^{m\cf}\mu_g\label{mass},\\
\inti_{\gg}(\alpha)&=\mathrm{e}^{-2\cf}\inti_g(\alpha)\>\text{ for any $\alpha\in\Omega^1_{\ICC^{\infty}}(M)$},\\
\nabla_{\gg,X}Y&=\nabla_{g,X}Y+\Id\cf(X) Y+ \Id\cf(Y)X-(X,Y)_g\operatorname{grad}_g(\cf)  \label{levi}\\
&\qquad \text{for all smooth vector fields $X$, $Y$ on $M$} \nonumber\\
\delta_{\gg}&=\mathrm{e}^{-2\cf} (\delta_g -\inti_g(\Id \cf)\tau)\label{deltaeq2},\\
\Delta_{\gg}&= \mathrm{e}^{-2\cf} \bigl(\Delta_g - 2\tau\: \operatorname{Lie}_{\operatorname{grad}_g(\cf)}
+2\inti_g( \Id\cf) \circ \Id   \nonumber\\
&\qquad+4\exte(\Id\cf)\inti_g( \Id\cf)\tau
-2 \exte( \Id\cf)\delta_g\bigr),\\
R_{g_{\psi}}&=\mathrm{e}^{-2\cf} \left(R_g-g \owedge \left(\mathrm{Hess}_g(\cf)- \Id \cf\otimes\Id \cf+\frac{1}{2}|\Id \cf|^2_g\right)  \right),\label{hessil}
\end{align}
where $\owedge$ denotes the Kulkarni-Nomizu tensor product.
\end{subequations}
\item\label{conf:b} If $\cf$ is bounded, then one has 
\begin{equation}\label{iadjoint}
I^*=  \mathrm{e}^{\cf \tau} I^{-1}.
\end{equation}
\item\label{conf:c} Assume that $\cf$ and $|\Id\cf|_g$ are
 bounded. Then one has $I\dom(\mathcal{Q}_g)= \dom(\mathcal{Q}_{\gg})$. 
\end{enumerate}
\end{proposition}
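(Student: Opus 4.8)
The plan is to realize both form domains as completions of the common smooth core $\Omega_{\ICC^{\infty}_{\mathrm{c}}}(M)$ and reduce everything to an equivalence of norms on that core. By definition $\mathcal{Q}_g$, $\mathcal{Q}_{\gg}$ are the closures of the indicated forms on $\Omega_{\ICC^{\infty}_{\mathrm{c}}}(M)$, so $\dom(\mathcal{Q}_g)$ is exactly the realization inside $\Omega_{\IL^2}(M,g)$ of the completion of $\Omega_{\ICC^{\infty}_{\mathrm{c}}}(M)$ in the norm
\[
\|\alpha\|_{\mathcal{Q}_g}^2 := \|\alpha\|_g^2 + \|D_g\alpha\|_g^2, \qquad \text{and likewise} \qquad \|\alpha\|_{\mathcal{Q}_{\gg}}^2 := \|\alpha\|_{\gg}^2 + \|D_{\gg}\alpha\|_{\gg}^2 .
\]
Since $\cf$ is bounded, \eqref{faser} and \eqref{mass} give $\|\alpha\|_{\gg}^2 = \sum_{j=0}^{m}\int_M \mathrm{e}^{(m-2j)\cf}\bigl|\alpha^{(j)}\bigr|_{g,j}^2\,\mu_g(\Id x)$, whence $\|\cdot\|_{\gg}\asymp\|\cdot\|_g$ uniformly on all forms and $I$ is a bounded isomorphism $\Omega_{\IL^2}(M,g)\to\Omega_{\IL^2}(M,\gg)$ with bounded inverse, acting as the identity on $\Omega_{\ICC^{\infty}_{\mathrm{c}}}(M)$. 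Hence it suffices to produce constants $c,C>0$ with $c\|\alpha\|_{\mathcal{Q}_g}\le\|I\alpha\|_{\mathcal{Q}_{\gg}}\le C\|\alpha\|_{\mathcal{Q}_g}$ for all $\alpha\in\Omega_{\ICC^{\infty}_{\mathrm{c}}}(M)$; comparing $\|\cdot\|_{\mathcal{Q}_g}$- and $\|\cdot\|_{\mathcal{Q}_{\gg}}$-Cauchy sequences in the two completions then yields $I\dom(\mathcal{Q}_g)=\dom(\mathcal{Q}_{\gg})$.

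For the upper bound I would exploit that $\Id$ is conformally invariant together with \eqref{deltaeq2} to write, on the core,
\[
D_{\gg}\alpha = \Id\alpha + \mathrm{e}^{-2\cf}\delta_g\alpha - \mathrm{e}^{-2\cf}\inti_g(\Id\cf)\tau\alpha .
\]
The $\IL^2$-parts of the two form norms are comparable by quasi-isometry, so only $\|D_{\gg}\alpha\|_{\gg}$ needs attention. Estimating the three summands in the $\gg$-norm via quasi-isometry and the boundedness of the multiplication operators $\mathrm{e}^{-2\cf}$ and $\tau$, the first two are controlled by $\|\Id\alpha\|_g$ and $\|\delta_g\alpha\|_g$, and both of these are $\le\|D_g\alpha\|_g$: indeed, since $\delta_g$ is the formal $\IL^2(g)$-adjoint of $\Id$ and $\Id^2=0=\delta_g^2$, the cross terms vanish and $\|D_g\alpha\|_g^2=\|\Id\alpha\|_g^2+\|\delta_g\alpha\|_g^2$ on compactly supported forms. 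The third, zeroth-order summand is bounded by Lemma \ref{inn} by $\lesssim\|\Id\cf\|_{g,\infty}\,\|\alpha\|_g$, which is finite precisely because $|\Id\cf|_g$ is bounded. Summing, $\|I\alpha\|_{\mathcal{Q}_{\gg}}\lesssim\|D_g\alpha\|_g+\|\alpha\|_g\asymp\|\alpha\|_{\mathcal{Q}_g}$.

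For the reverse bound I would argue by symmetry: $g=\mathrm{e}^{2(-\cf)}\gg$, and both $-\cf$ and $|\Id(-\cf)|_{\gg}$ are bounded, the latter because \eqref{faser} gives $|\Id\cf|_{\gg}^2=\mathrm{e}^{-2\cf}|\Id\cf|_g^2$, bounded since $\cf$ and $|\Id\cf|_g$ are. Thus the hypotheses hold with $g$ and $\gg$ interchanged and $I$ replaced by $I^{-1}=I_{\gg,g}$, and the already-established upper bound gives $\|I^{-1}\beta\|_{\mathcal{Q}_g}\lesssim\|\beta\|_{\mathcal{Q}_{\gg}}$; taking $\beta=I\alpha$ yields $\|\alpha\|_{\mathcal{Q}_g}\lesssim\|I\alpha\|_{\mathcal{Q}_{\gg}}$, completing the norm equivalence.

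I expect the pointwise computations to be routine once part a) is available; the two points needing care are the clean reduction to norm-equivalence on the shared smooth core (i.e.\ that the form domain genuinely is the form-norm completion of $\Omega_{\ICC^{\infty}_{\mathrm{c}}}(M)$), and the verification that the hypotheses transfer under $g\leftrightarrow\gg$ so that the symmetry step is legitimate. The crucial structural feature is that $D_{\gg}$ differs from $\Id+\mathrm{e}^{-2\cf}\delta_g$ only by the zeroth-order term $\mathrm{e}^{-2\cf}\inti_g(\Id\cf)\tau$, whose boundedness is exactly what forces — and is guaranteed by — the assumption that $|\Id\cf|_g$ is bounded.
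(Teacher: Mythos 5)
Your proposal is correct and follows essentially the same route as the paper's own proof: both realize $\dom(\mathcal{Q}_g)$ and $\dom(\mathcal{Q}_{\gg})$ as closures of $\Omega_{\ICC^{\infty}_{\mathrm{c}}}(M)$ in the respective Dirac graph norms, use the Pythagorean splitting $\|D_g\omega\|_g^2=\|\Id\omega\|_g^2+\|\delta_g\omega\|_g^2$ together with \eqref{deltaeq2} and Lemma \ref{inn} to compare the norms (the zeroth-order term $\mathrm{e}^{-2\cf}\inti_g(\Id\cf)\tau$ being bounded exactly because $|\Id\cf|_g$ is), and obtain the reverse inequality by the symmetry $g=\mathrm{e}^{-2\cf}\gg$. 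Your explicit check that $|\Id\cf|_{\gg}$ is bounded, via \eqref{faser}, is a detail the paper leaves implicit but is the same argument.
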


\begin{proof} The proof of part a) is straightforward, the formulas can be found in \cite{besse}, pg.~58f. Part b) then follows easily from (\ref{faser}) and (\ref{mass}).\\
For part c), note that $\dom(\mathcal{Q}_g)$ is the closure of $\Omega_{\ICC^{\infty}_{\mathrm{c}}}(M)$ w.r.t. the Dirac graph norm 
$$
\omega\longmapsto \bigl(\|\omega\|^2_g+\|D_g\omega\|_g^2\bigr)^{1/2}. 
$$
Moreover one has 
$$
\|D_g\omega\|^2_g=\|\Id\omega\|^2_g+\|\delta_g\omega\|^2_g.
$$
Applying \eqref{deltaeq2} and Lemma \ref{inn}, we obtain
\[
\|\delta_\gg\omega\|^2_{\gg}\le \| \mathrm{e}^{-2\cf}\|_{\infty}^2 \|\delta_g\omega\|^2_g+C_m\|\Id \mathrm{e}^{-2\cf}\|_{g,\infty}^2 \|\omega\|^2_g
\]
Writing $g=\mathrm{e}^{-2\cf}\gg$, the same argument shows
\[
\|\delta_g\omega\|^2_g\le \| \mathrm{e}^{+2\cf}\|_{\infty}^2 \|\delta_\gg\omega\|^2_{\gg}+C_m\|\Id \mathrm{e}^{+2\cf}\|_{\gg,\infty}^2 \|\omega\|^2_{\gg}
\]
and therefore that the graphs norms w.r.t $g$ and $\gg$ are equivalent. This proves the claim.
\end{proof}

\section{Harmonic Sobolev coordinates and the class of metrics $\IMM_{K,h}(M)$}\label{sec:2}

In this section, we collect and prove some facts on harmonic coordinates, that will play an essential for our main results. First we recall the classical definition of the Sobolev harmonic radius $r_g(x,p,q)$ from \cite{anderson}. 
\begin{definition} 
Let $p\in (m,\infty)$, $q\in (1,\infty)$, $x\in M$. Then the \emph{ $\mathsf{W}^{1,p}_g$-harmonic radius at $x$ with Euclidean distortion $q$}, $r_g(x,p,q)\in (0,\infty]$, is defined to be the supremum of all $r>0$ such that there is a $\Delta^{(0)}_g$-harmonic chart
$$
\Phi:B_g\big(x,r\big)\longrightarrow U\subset \IR^m
$$
which, with respect to the $\Phi$-coordinates,  satisfies the estimates
\begin{subequations}
\begin{align}
&q^{-1}(\delta_{ij}) \leq g_{ij}\leq q(\delta_{ij})\text{ as symmetric bilinear forms},\label{harm1}\\
&r^{1- \frac{m}{p}}\Bigl(\int_U |\partial_kg_{ij}(y)|^p\Id y\Bigr)^{1/p} \leq q-1\text{ for all $i,j,k\in\{1,\dots,m\}$}.\label{harm2}
\end{align}
\end{subequations}

\end{definition}

The following definitions will be convenient for the formulation of our main results. Recall that $\Q$ stands for the curvature endomorphism.

\begin{definition} 
\begin{enumerate}[a)]
\item For any $K>0$ and any function $h:M\to (0,\infty)$, let 
\begin{align*}
\IMM_{K,h}(M):=&\Big\{ \tilde{g}\>\Big|\> \text{\emph{$\tilde{g}$ is a complete metric on $M$ with $\Q_{\tilde{g}}\geq -K$}}\\
&  \text{  \emph{and $\min\{1,r_g(\cdot,p,q)\}\geq  h$ for some $p\in (m,\infty),q\in (1,\sqrt{2})$}}\Big\}.
\end{align*}

\item Given a Borel function $h:M\to (0,\infty)$ and a smooth function $\cf:M\to \IR$ define
\begin{align*}
\Id(g,\cf)(x)&:=\max\bigl\{\sinh(2|\cf(x)|), \left|\Id\cf(x)\right|_{g}\bigr\}, \quad x\in M,\\
\Id_{h}(g,\cf)&:=\int_M \Id(g,\cf)(x)h(x)^{-(m+2)} \; \mu_{g}(\Id x)\quad \in[0,\infty].
\end{align*}
Then $\cf$ is called a $h$-\emph{scattering perturbation of $g$}, if one has $\Id_h(g,\cf)<\infty$.
\end{enumerate}
\end{definition}

It is not obvious from the definition that $r_g(x,p,q)>0$, but ultimately this follows from classical elliptic PDE theory (cf. \cite{deturck}), or it can also by deduced from from applying Proposition \ref{prop:inj} below near $x$. Furthermore one has the following fact:

\begin{lemma}\label{lem:harm}
For all $p,q$, the capped $\mathsf{W}^{1,p}_g$-harmonic radius $\min\{1,r_g(\cdot,p,q)\}$ is $1$-Lipschitz continuous w.r.t. $g$, that is, for all $x,y\in M$ one has
\begin{equation} |\min\{1,r_g(x,p,q)\}-\min\{1,r_g(y,p,q)\}|\le \Id_g(x,y).\label{lip1}
\end{equation}
\end{lemma}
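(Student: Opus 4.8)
The plan is to deduce the capped statement from a monotonicity property of harmonic charts under shrinking the ball, combined with the elementary fact that $s\mapsto\min\{1,s\}$ is nondecreasing and $1$-Lipschitz on $[0,\infty]$. Throughout write $d:=\Id_g(x,y)$ and suppress the fixed parameters, $r_g(\cdot):=r_g(\cdot,p,q)$. The core of the argument is the (uncapped) bound
$$
r_g(y)\ge r_g(x)-d ,
$$
from which \eqref{lip1} will follow after capping.

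First I would fix any radius $r$ with $d<r<r_g(x)$ and pick, by definition of $r_g(x)$, a $\Delta^{(0)}_g$-harmonic chart $\Phi:B_g(x,r)\to U\subset\IR^m$ satisfying \eqref{harm1} and \eqref{harm2}. By the triangle inequality one has $B_g(y,r-d)\subseteq B_g(x,r)$, so the restriction $\Phi|_{B_g(y,r-d)}$ is a diffeomorphism onto the open set $U':=\Phi(B_g(y,r-d))\subseteq U$, and its coordinate functions stay $g$-harmonic because harmonicity is a local condition; hence it is a harmonic chart on $B_g(y,r-d)$. It remains to verify \eqref{harm1} and \eqref{harm2} for this chart with $r$ replaced by $r-d$. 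Estimate \eqref{harm1} is a pointwise bound on the $g_{ij}$ and passes to $U'\subseteq U$ unchanged. For \eqref{harm2} the decisive point is that $p>m$, so that $1-\tfrac{m}{p}>0$ and therefore $(r-d)^{1-m/p}\le r^{1-m/p}$; since also $\int_{U'}|\partial_k g_{ij}|^p\le\int_{U}|\partial_k g_{ij}|^p$, we get
$$
(r-d)^{1-\frac{m}{p}}\Bigl(\int_{U'}|\partial_k g_{ij}(w)|^p\,\Id w\Bigr)^{1/p}
\le r^{1-\frac{m}{p}}\Bigl(\int_{U}|\partial_k g_{ij}(w)|^p\,\Id w\Bigr)^{1/p}\le q-1 .
$$
This shows $r_g(y)\ge r-d$; letting $r\uparrow r_g(x)$ (the claim being vacuous when $r_g(x)\le d$, and forcing $r_g(y)=\infty$ when $r_g(x)=\infty$) yields the displayed uncapped bound.

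Then I would cap: since $s\mapsto\min\{1,s\}$ is nondecreasing and one checks elementarily that $\min\{1,a-d\}\ge\min\{1,a\}-d$ for all $a\ge 0$, $d\ge 0$, the uncapped bound gives
$$
\min\{1,r_g(y)\}\ge\min\{1,r_g(x)-d\}\ge\min\{1,r_g(x)\}-d .
$$
Exchanging the roles of $x$ and $y$ gives the reverse inequality, and the two together are exactly \eqref{lip1}.

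I expect the only genuinely delicate point to be the claim that restricting a harmonic chart to the concentric smaller ball preserves \emph{all} defining properties at once: harmonicity (immediate, being local), the pointwise metric bound \eqref{harm1} (trivially inherited by a subdomain), and above all the scale-invariant gradient bound \eqref{harm2}, whose monotonicity under shrinking hinges precisely on the sign condition $1-\tfrac{m}{p}>0$, i.e.\ on $p>m$. The remaining steps are triangle-inequality bookkeeping and the elementary Lipschitz property of the cap.
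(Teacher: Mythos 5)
Your proof is correct, and at its core it rests on the same fact as the paper's own proof: the ball-inclusion monotonicity of the harmonic radius, i.e.\ that a harmonic chart on $B_g(x,r)$ restricts to an admissible chart on $B_g(y,r-\Id_g(x,y))$. The difference lies in how much of this is actually carried out, and in the bookkeeping. The paper \emph{asserts} the key step without justification (namely that $y\in B_g(x,\tilde r(x))$ implies $r_g(y)\ge \tilde r(x)-\Id_g(x,y)$, with $\tilde r:=\min\{1,r_g(\cdot,p,q)\}$) and then deduces \eqref{lip1} through a four-case analysis according to whether $y\in B_g(x,\tilde r(x))$ and whether $x\in B_g(y,\tilde r(y))$. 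You, by contrast, prove the monotonicity step from the definition, and you correctly isolate the one point where it could fail: the scale factor $r^{1-m/p}$ in \eqref{harm2} is nondecreasing in $r$ precisely because $p>m$, while the pointwise bound \eqref{harm1} and harmonicity restrict trivially to the smaller ball. You then obtain the capped statement cleanly from the uncapped bound $r_g(y)\ge r_g(x)-\Id_g(x,y)$ via the elementary inequality $\min\{1,a-d\}\ge\min\{1,a\}-d$ together with symmetry in $x$ and $y$, instead of the paper's case distinctions. So your writeup is both more complete (it supplies the chart-restriction argument the paper leaves implicit) and structurally simpler; and since you handle the edge cases $r_g(x)\le \Id_g(x,y)$ and $r_g(x)=\infty$ explicitly, nothing is lost relative to the paper's arrangement.
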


\begin{proof}
We omit all $g$\rq{}s, fix $p,q$ and set, $r(x):=r(x,p,q)$, $\tilde{r}(x):=\min\{1,r(x)\}$. \\
Let $x\in M$ and let $y\in B(x,\tilde{r}(x))$. This implies that $r(y)\geq \tilde{r}(x)-\Id(x,y).$ Moreover $0<\tilde{r}(x)-\Id(x,y)<1$ because $\tilde{r}(x)=\min\{1,r(x)\}$ and $\Id(x,y)< \tilde{r}(x)$. Therefore we can conclude that 
$$
\min\{1,r(y)\}\geq \min\{r(x),1\}-\Id(x,y)\ \text{that is}\ \tilde{r}(y)\geq \tilde{r}(x)-\Id(x,y).
$$ 
If $\tilde{r}(x)\geq \tilde{r}(y)$ then we can conclude that $|\tilde{r}(x)-\tilde{r}(y)|\leq \Id(x,y)$.  If $\tilde{r}(x)<\tilde{r}(y)$ then $x\in B(y,\tilde{r}(y))$. This implies that $r(x)\geq \tilde{r}(y)-\Id(x,y)$ and this inequality, as before, leads to the conclusion that $\tilde{r}(x)\geq \tilde{r}(y)-\Id(x,y)$ which in turn implies that  $|\tilde{r}(x)-\tilde{r}(y)|\leq \Id(x,y)$.\\Suppose now that $y\notin B(x,\tilde{r}(x))$. If $x\notin B(y,\tilde{r}(y))$ as well then we can conclude immediately that $|\tilde{r}(x)-\tilde{r}(y)|\leq \Id(x,y)$.  If $x\in B(y,\tilde{r}(y))$ then, as above, we have $r(x)\geq \tilde{r}(y)-\Id(x,y)$ that is $r(x)\geq \min\{r(y),1\}-\Id(x,y)$ which in turn implies $\min\{r(x),1\} \geq \min\{r(y),1\}-\Id(x,y)$ that is $\tilde{r}(x)\geq \tilde{r}(y)-\Id(x,y)$. Finally in this last case we have $\tilde{r}(y)>\tilde{r}(x)$ and so we can conclude that $|\tilde{r}(x)-\tilde{r}(y)|\leq \Id(x,y)$.
\end{proof}

In Proposition \ref{prop:inj} below we provide the reader with harmonic radius estimates under lower bounds on the Ricci curvature, that are required for the class $\IMM_{K,h}(M)$. These estimates heavily rely on classical results from \cite{anderson,HPW}. In order to make contact with our main results on scattering below, we add:

\begin{remark} If one has $\Q_g\geq -K$ for some $K>0$, then one automatically has $\mathrm{Ric}_g\geq -K(m-1)$.
\end{remark}

Now we can prove:

\begin{proposition}\label{prop:inj} Assume that
\[
\mathrm{Ric}_g(x)\geq -\frac{1}{\beta^2}\quad \text{and}\quad \inj_g(x)\geq \tilde h(x)\>\text{ for all $x\in M$,}
\]
where $\beta>0$ is a constant and $\tilde h:M\to (0,\infty)$ is a 
continuous function (as $x\mapsto \inj_g(x)$ itself is continuous, such a function always exists).

\begin{enumerate}[a)]
\item If $\tilde h$ is $g$-Lipschitz, then for any $p,q$ there is $C=C(m,p,q)>0$ such that for all $x\in M$ one has 
$$
\min\{r_g(x,p,q),1\}\geq C\min\biggl\{1,\; \frac{\tilde h(x)}{1+\|\Id \tilde h\|_{\infty,g}},\; \beta \biggr\}.
$$
\item If there is a point $x_0\in M$, and constants $c_1>0$, $c_2\geq 0$ such that $\tilde{h}\geq c_1\mathrm{e}^{-c_2\Id_g(\cdot,x_0)}$, then for any $p,q$ there is $C=C(m,p,q)>0$ such that for all $x\in M$ one has 
\[
\min\{r_g(x,p,q),1\}\geq C\min\Bigl\{1,\;\frac{c_1}{\mathrm{e}^{c_2}}\mathrm{e}^{-c_2\Id_g(x,x_0)},\; \beta \Bigr\}.
\]
\end{enumerate}
\end{proposition}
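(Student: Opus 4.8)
The engine of the proof is the quantitative harmonic radius estimate of Anderson, in the form extracted in \cite{anderson,HPW}: for every $m$, $p\in(m,\infty)$, $q\in(1,\infty)$ there is a constant $C_0=C_0(m,p,q)\in(0,1]$ with the following property. If $(M,g)$ is complete, $x\in M$ and $\iota,\beta>0$ are such that $\mathrm{Ric}_g\geq -\beta^{-2}$ and $\inj_g\geq \iota$ hold on the ball $B_g(x,\min\{\iota,\beta\})$, then $r_g(x,p,q)\geq C_0\min\{\iota,\beta\}$. (Since the definition \eqref{harm1}--\eqref{harm2} only constrains the first derivatives of $g$ in $\IL^p$, a one-sided Ricci bound together with a ball-wise injectivity radius bound suffices; all dimensional factors such as $\sqrt{m-1}$ are absorbed into $C_0$.) In our setting the lower Ricci bound $\mathrm{Ric}_g\geq -\beta^{-2}$ is global, so the plan is simply to produce, for each $x\in M$, a radius $\iota(x)>0$ for which $\inj_g\geq \iota(x)$ holds on the \emph{entire} ball $B_g(x,\iota(x))$; feeding this $\iota(x)$ and the global $\beta$ into the estimate above and capping by $1$ then gives the claim, since $B_g(x,\min\{\iota(x),\beta\})\subseteq B_g(x,\iota(x))$.

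For part a) I would take $\iota(x):=\tilde h(x)\big/\big(2(1+\|\Id\tilde h\|_{\infty,g})\big)$. For $y\in B_g(x,\iota(x))$ the Lipschitz bound on $\tilde h$ gives $\tilde h(y)\geq \tilde h(x)-\|\Id\tilde h\|_{\infty,g}\,\Id_g(x,y)\geq \tilde h(x)/2\geq \iota(x)$, whence $\inj_g(y)\geq \tilde h(y)\geq \iota(x)$ throughout the ball. The estimate above then yields $\min\{r_g(x,p,q),1\}\geq C_0\min\{1,\iota(x),\beta\}$, and pulling the factor $\tfrac12$ out of the minimum turns this into the asserted bound with $C=C_0/2$.

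For part b) I would take $\iota(x):=\min\{1,\tfrac{c_1}{\mathrm{e}^{c_2}}\mathrm{e}^{-c_2\Id_g(x,x_0)}\}$. Because $\iota(x)\leq 1$, every $y\in B_g(x,\iota(x))$ satisfies $\Id_g(y,x_0)\leq \Id_g(x,x_0)+1$, so $\tilde h(y)\geq c_1\mathrm{e}^{-c_2\Id_g(y,x_0)}\geq \tfrac{c_1}{\mathrm{e}^{c_2}}\mathrm{e}^{-c_2\Id_g(x,x_0)}\geq \iota(x)$, and again $\inj_g\geq\iota(x)$ on $B_g(x,\iota(x))$. The harmonic radius estimate then gives $\min\{r_g(x,p,q),1\}\geq C_0\min\{1,\iota(x),\beta\}$, which is exactly the claimed inequality.

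The only non-formal ingredient is the Anderson estimate itself, whose hypothesis demands an injectivity radius lower bound on a whole ball rather than merely at its centre. Securing such a self-consistent bound on a ball of radius comparable to $\iota(x)$ is precisely where the Lipschitz continuity (resp.~the exponential lower bound) of $\tilde h$ is used; the remaining steps are elementary applications of the triangle inequality, so I expect the main obstacle to lie in correctly invoking the cited harmonic radius estimate, not in the geometry of the two cases.
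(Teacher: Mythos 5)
Your proof is correct and follows essentially the same route as the paper: the key external ingredient is the Anderson/HPW harmonic radius lower bound under a Ricci lower bound together with a ball-wise injectivity radius bound, and your explicit self-consistent radius $\iota(x)$ is precisely a lower bound for the ``homogenized injectivity radius'' $\sup\{t>0 \mid \inf_{B_g(x,t)}\inj_g\geq t\}$ that the paper estimates (via Proposition 2.3 of \cite{HPW}) with the same Lipschitz, respectively exponential, computations. The only differences are cosmetic: the paper names this homogenized radius explicitly, and your part a) carries a harmless extra factor of $\tfrac12$ absorbed into the constant.
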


\begin{proof} We will omit the dependence of $g$ in the notation. Assume that the strictly positive continuous function $r_0$ be a lower bound of the homogenized injectivity radius $\iota(x)=\iota_g(x)$ as defined in \cite{HPW,anderson}.
Then a direct consequence of Proposition 2.3 of  \cite{HPW} (which heavily relies on estimates from \cite{anderson}) is that
 there is a $C'=C'(m,p,q)>0$ such that for all $x\in M$ the harmonic radius is bounded from below by
\begin{equation}
r(x,p,q)\geq C'\cdot h(x),\qquad h(x)=\min\bigl\{1,\; r_0(x),\; \beta \bigr\},\label{homgen}\tag{$\ast$}
\end{equation}
so that 
$$
\min\{r(x,p,q),1\}\geq \min\{C',1\}\cdot h(x).
$$

In the cases \emph{a)} and \emph{b)} we can estimate  the homogenized injectivity radius  $\iota(x)$ at $x\in M$ and find an explicit expression for $r_0(x)$.
First we recall the definition of $\iota(x)$.
For any continuous function $f:M\to\IR$ and $t>0$ let  
\[
\myinf{t} f(x):=\inf_{y\in B(x,t)}f(y),
\]
then 
\[
\iota(x):=\sup\{ t>0 \mid \myinf{t} \inj(x)\geq t \}.
\]
Note that $t\mapsto \myinf{t} f(x)$ is non-increasing, and for $t>0$ one has 
$$
\myinf{t} \inj(x)\geq \myinf{t}\,\tilde h(x) 
$$
We will choose $r_0(x)$ such that
\begin{align*}
\iota(x)&=\sup\{ t>0 \mid \myinf{t} \inj(x)\geq t \}
\geq \sup \{ t>0 \mid \myinf{t} \tilde h(x) \geq t \} \ge r_0(x).
\end{align*}

\emph{a)}
Let $L:=\|\Id \tilde h\|_{\infty}$. Then $ \myinf{t}\,\tilde h(x) \geq \tilde h(x)- Lt$
so that 
\begin{align*}
\iota(x)
\geq \sup \{ t>0 \mid \tilde h(x)-Lt\geq t \} = \frac{\tilde h(x)}{1+L}=: r_0(x).
\end{align*}

\emph{b)} Let $b(x):=\Id(x,x_0)$. For $\tilde h(x)=c_1 \mathrm{e}^{-c_2 b(x)}$,
\begin{align*}
\iota(x)&\ge \sup \{ t>0 \mid \myinf{t} \tilde h(x) \geq t \}
= \sup \bigl\{ t>0 \mid \inf_{y\in B(x,t)} c_1 \mathrm{e}^{-c_2 b(y)} \geq t \bigr\} \\
&\ge \sup \bigl\{ t>0 \mid c_1 \mathrm{e}^{-c_2 (b(x)+t)} \geq t \bigr\}
=\sup \bigl\{ t>0 \mid c_1 \mathrm{e}^{-c_2 b(x)} \geq t \mathrm{e}^{c_2 t} \bigr\}
\end{align*}
Because we are only interested in $r_0(x)\le 1$, we conclude further
\begin{align*}
\min\{\iota(x),1\} &\ge \sup \bigl\{ t\in (0,1] \mid c_1 \mathrm{e}^{-c_2 b(x)} \geq t \mathrm{e}^{c_2} \bigr\}
=\frac{c_1}{\mathrm{e}^{c_2}}\mathrm{e}^{-c_2 b(x)}=:r_0(x).
\end{align*}
This completes the proof.\end{proof}

\section{Main results: The existence of the wave operators}\label{sec:3}

This section is completely devoted to the formulation and the proof of our main result Theorem \ref{main} below, which deals with the existence and the completeness of the wave operators $W_{\pm}(H_{\gg},H_g, I)$.\\
The following two propositions are the main technical tools for the proof of Theorem \ref{main}. The first is a decomposition formula for the operator $R_{\gg,\lambda}^n (H_{\gg} I -I H_g )R_{g,\lambda}^n$:

\begin{proposition}\label{decompo} In the situation of Proposition \ref{conf}\ref{conf:c}), let $\lambda>0$, $n\geq 1$ and let $g$ (and thus $\gg$) be complete. Then the bounded operator
$$
R_{\gg,\lambda}^n (H_{\gg} I -I H_g )R_{g,\lambda}^n\longrightarrow \Omega_{\IL^2}(M,g)\>\text{ to }\>\Omega_{\IL^2}(M,\gg)
$$ 
can be decomposed as
\begin{multline}\label{vdec}
R_{\gg,\lambda}^n (H_{\gg} I -I H_g )R_{g,\lambda}^n= \\
R_{\gg,\lambda}^n
\Big(
D_{\gg} \cdot 2\sinh(2\cf) I D_g 
+D_{\gg} I (1-\mathrm{e}^{-2\cf}) \Id -\Id \circ (1-\mathrm{e}^{2\cf}) I D_g \\ 
+D_{\gg}  \inti_{\gg}(\Id\cf)\,\tau\: I 
- \tau \:\inti_g(\Id\cf) D_g
\Big)R_{g,\lambda}^n.
\end{multline}
\end{proposition}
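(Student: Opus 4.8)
The plan is to peel off the resolvents and the identification $I$, reduce the claim to a local identity between the differential operators $\Delta_\gg$ and $\Delta_g$ on smooth forms, and then produce that identity by factoring the second order difference into first order pieces whose conformal behaviour is governed by \eqref{deltaeq2}.

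First I would check that every operator on the right of \eqref{vdec} is bounded, so that it suffices to verify the identity on a dense set. Since $H_g=D_g^2$, the spectral calculus shows $D_g R_{g,\lambda}^n=D_g(H_g+\lambda)^{-n}$ is bounded, and splitting $\|D_g\omega\|_g^2=\|\Id\omega\|_g^2+\|\delta_g\omega\|_g^2$ gives the same for $\Id R_{g,\lambda}^n$ and $\delta_g R_{g,\lambda}^n$, and likewise with $\gg$ in place of $g$; the multiplications $2\sinh(2\cf)$, $1-\mathrm{e}^{\pm2\cf}$ are bounded because $\cf$ is, and the contractions $\inti_g(\Id\cf)$, $\inti_\gg(\Id\cf)$ are bounded by Lemma \ref{inn} because $|\Id\cf|_g$ is (the two $\IL^2$-structures being comparable by Proposition \ref{conf}\ref{conf:c})). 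As $I$ is the identity on the level of forms, the statement then reduces to the pointwise identity of differential operators
\begin{multline*}
\Delta_\gg-\Delta_g=D_\gg\,2\sinh(2\cf)\,D_g+D_\gg(1-\mathrm{e}^{-2\cf})\Id\\
-\Id(1-\mathrm{e}^{2\cf})D_g+D_\gg\inti_\gg(\Id\cf)\tau-\tau\,\inti_g(\Id\cf)D_g
\end{multline*}
on smooth forms: completeness of $g$ makes $\Omega_{\ICC^\infty_{\mathrm c}}(M)$ a core for $H_g$ and $H_\gg$, and for $\omega=R_{g,\lambda}^n\alpha$ with $\alpha$ smooth and compactly supported one has $\omega\in\dom(H_g^n)$ and $\omega$ smooth by elliptic regularity, so the distributional identity forces $I\omega\in\dom(H_\gg)$ with $H_\gg I\omega-IH_g\omega$ equal to the right-hand side; boundedness then extends the conclusion to all of $\Omega_{\IL^2}(M,g)$.

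For the differential identity I would feed the telescoping
\[
\Delta_\gg-\Delta_g=D_\gg^2-D_g^2=D_\gg(D_\gg-D_g)+(D_\gg-D_g)D_g
\]
with the conformal transformation of the Gauss--Bonnet operator. From \eqref{deltaeq2} and $\inti_\gg(\Id\cf)=\mathrm{e}^{-2\cf}\inti_g(\Id\cf)$ one obtains
\[
D_\gg=\mathrm{e}^{-2\cf}D_g+(1-\mathrm{e}^{-2\cf})\Id-\inti_\gg(\Id\cf)\tau,
\]
and solving for $D_g$ the companion relation $D_g=\mathrm{e}^{2\cf}D_\gg+(1-\mathrm{e}^{2\cf})\Id+\inti_g(\Id\cf)\tau$. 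The idea is to rewrite $D_\gg-D_g$ by the first relation inside the summand $D_\gg(D_\gg-D_g)$, producing a factor $\mathrm{e}^{-2\cf}$ between the two Dirac operators, and by the second relation inside $(D_\gg-D_g)D_g$, producing a factor $\mathrm{e}^{2\cf}$; after commuting one scalar past $D_\gg$ the two contributions merge in the cross term as a multiple of $\mathrm{e}^{2\cf}-\mathrm{e}^{-2\cf}=2\sinh(2\cf)$ (the constant parts cancelling), which is exactly where the hyperbolic sine is born.

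The real obstacle is the commutator bookkeeping needed to collapse the surviving terms into precisely the stated five. Each time a scalar $\mathrm{e}^{\pm2\cf}$ is moved past a Dirac operator one picks up, by \eqref{diracf}, a Clifford multiplication $[D_\ast,f]=\mathrm{c}_\ast(\Id f)=\exte(\Id f)-\inti_\ast(\Id f)$; I expect the $\exte(\Id f)$-parts to cancel among themselves and against the reordering of the two $\Id$-terms, while the $\inti_\ast(\Id f)$-parts, using $\Id\,\mathrm{e}^{\pm2\cf}=\pm2\,\mathrm{e}^{\pm2\cf}\Id\cf$ and the rescaling $\inti_\gg=\mathrm{e}^{-2\cf}\inti_g$, reassemble into the two contraction terms. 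One must also invoke the noncommutativity $\tau\,\inti_g(\Id\cf)=\inti_g(\Id\cf)\tau+2\,\inti_g(\Id\cf)$ (as $\tau$ acts by $m-2j$ in degree $j$ and contraction lowers the degree by one), which is what turns the naturally produced $\inti_g(\Id\cf)(\tau+2)$ into the term $\tau\,\inti_g(\Id\cf)$. As a robustness check on the second order part I would compare principal symbols: $\Delta_\gg$ and $\Delta_g$ carry symbols $\mathrm{e}^{-2\cf}|\xi|_g^2$ and $|\xi|_g^2$, and the cross and $\Id$-terms are evaluated from $\mathrm{c}_g(\xi)^2=-|\xi|_g^2$ together with the anticommutation of $\exte(\xi)$ and $\inti_g(\xi)$, which pins down the scalar coefficients and confirms the second order match. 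Reinstating $I$ and conjugating by $R_{\gg,\lambda}^n(\,\cdot\,)R_{g,\lambda}^n$ then yields \eqref{vdec}.
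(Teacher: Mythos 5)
Your route is genuinely different from the paper's. The paper never establishes a pointwise operator identity at all: it computes the pairing $\sce{R^n_{\gg,\lambda}(H_\gg I-IH_g)R^n_{g,\lambda}f_1}{f_2}_{\gg}$ weakly, moving $D_\gg$, $D_g$ and the multiplication operators across the inner products by means of $I^*=\mathrm{e}^{\cf\tau}I^{-1}$, \eqref{deltaeq2} and \eqref{diracf}, so that only $Ih_1\in\dom(\mathcal{Q}_{\gg})$ (Proposition \ref{conf}\ref{conf:c})) is ever required. Your strong formulation can in principle be made to work, and its algebraic ingredients are correct: both of your conformal relations for the Gauss--Bonnet operators follow from \eqref{deltaeq2}, the $\exte$-contributions do cancel as you predicted, and $\tau\,\inti_g(\Id\cf)=\inti_g(\Id\cf)(\tau+2)$ is exactly the right degree bookkeeping.

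The genuine gap is that the sign bookkeeping --- which you yourself call ``the real obstacle'' --- is never carried out, and carried out correctly it does \emph{not} give \eqref{vdec}. Your two relations yield
\begin{align*}
D_\gg(D_\gg-D_g)&=D_\gg(\mathrm{e}^{-2\cf}-1)D_g+D_\gg(1-\mathrm{e}^{-2\cf})\Id-D_\gg\inti_\gg(\Id\cf)\tau,\\
(D_\gg-D_g)D_g&=D_\gg(1-\mathrm{e}^{2\cf})D_g-\Id\circ(1-\mathrm{e}^{2\cf})D_g-\tau\,\inti_g(\Id\cf)D_g,
\end{align*}
so the cross terms merge to $D_\gg(\mathrm{e}^{-2\cf}-\mathrm{e}^{2\cf})D_g=-D_\gg\,2\sinh(2\cf)\,D_g$, and the contraction term from the first line is $-D_\gg\inti_\gg(\Id\cf)\tau$: two of the five summands carry the sign opposite to \eqref{vdec}. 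The discrepancy is real, not notational. Test the middle identity on $0$-forms with $\cf$ a nonzero constant: all contraction terms vanish, $D_gu=\Id u$, $D_\gg \Id u=\mathrm{e}^{-2\cf}\Delta_g u$, and \eqref{vdec} would force
\begin{equation*}
(\mathrm{e}^{-2\cf}-1)\Delta_g u=\bigl[2\sinh(2\cf)+1-\mathrm{e}^{-2\cf}\bigr]\mathrm{e}^{-2\cf}\Delta_g u,
\end{equation*}
which is false, while the minus-sign version is an identity; your proposed principal-symbol check, if actually performed, likewise pins the cross coefficient to $\mathrm{e}^{-2\cf}-\mathrm{e}^{2\cf}$, not $+2\sinh(2\cf)$. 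So your plan, executed correctly, proves a sign-corrected version of the proposition and cannot prove the literal \eqref{vdec}. (You are in good company: the paper's own intermediate computation produces exactly these minus signs --- in the expansion of $\sce{D_\gg Ih_1}{D_\gg h_2}_{\gg}$ the contraction term appears with a minus, and the displayed cross terms combine to $\phi^{-2}-\phi^{2}$ --- so the plus signs in \eqref{vdec} appear to be typos introduced when the proof is assembled; nothing downstream is affected, since the proof of Theorem \ref{main} only uses Schatten-norm bounds on each summand separately.) A second, smaller gap: the step ``the distributional identity forces $I\omega\in\dom(H_\gg)$'' is not automatic; it needs each right-hand term applied to $\omega=R^n_{g,\lambda}\alpha$ to lie in $\Omega_{\IL^2}$ (e.g.\ $\Id\delta_g\omega\in\Omega_{\IL^2}$), which on complete manifolds requires Gaffney-type domain identities --- a point the paper's weak formulation sidesteps entirely.
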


\begin{proof} Let us first note that if $\tilde{g}$ is a complete metric, then 
$$
\dom(\mathcal{Q}_{\tilde{g}})=\big\{\alpha\left|\alpha\in\Omega_{\IL^2}(M,\tilde{g}), D_{\tilde{g}}\alpha\in \Omega_{\IL^2}(M,\tilde{g})\big\}\right. ,
$$
It follows from Proposition \ref{conf}\ref{conf:c}) that
$$
I\dom(\mathcal{Q}_{g})=\dom(\mathcal{Q}_{g_{\psi}}).
$$
We set $g_1:=g$, $g_2:=\gg$, and $R_j:=R_{g_j,\lambda}$. Then, with an obvious notation, let 
$$
V:=R_{2}^n (H_{2} I -I H_1 )R_{1}^n,
$$
and let arbitrary $f_j\in\Omega_{\IL^2}^k(M,g_j), j=1,2$ be given. We further define 
$$
h_j:=R^n_j f_j\in \dom({H_j}^n)\subset \dom(\sqrt{H_j})=\dom(\mathcal{Q}_{j}). 
$$
Then with $\phi:=\mathrm{e}^{\cf}$ we can calculate 
{\allowdisplaybreaks
\begin{align*}
\sce{V f_1}{f_{2}}_{2}&= \sce{R_{2}^n (H_{2} I -I H_1 )R^n_1 f_1}{f_{2}}_{2} \\
&= \sce{D_{2} I h_1}{D_{2} h_{2}}_{2}-\sce{I H_1 h_1}{h_{2}}_{2} \\
\sce{D_{2} I h_1}{D_{2} h_{2}}_{2}
&=\sce{ I (\Id+\phi^{-2}[\delta-\inti_1(\Id\phi/\phi)\tau]) h_1}{D_{2} h_{2}}_{2} \\
&=\sce{ D_{2} I (\Id+\phi^{-2}[\delta-\inti_1(\Id\phi/\phi)\tau]) h_1}{ h_{2}}_{2}\\
&=\sce{ D_{2} I \phi^{-2}(D_1-\inti_1(\Id\phi/\phi)\tau) h_1}{ h_{2}}_{2}
+\sce{ D_{2} I (1-\phi^{-2})\Id  h_1}{ h_{2}}_{2} \\
&=\sce{ D_{2} I \phi^{-2}D_1 h_1}{ h_{2}}_{2}
+\sce{ D_{2} I (1-\phi^{-2})\Id  h_1}{ h_{2}}_{2} \\
&\qquad -\sce{ D_{2} I \phi^{-2}\inti_1(\Id\phi/\phi)\tau h_1}{ h_{2}}_{2}\\
\sce{I H_1 h_1}{h_{2}}_{2} &=\sce{H_1 h_1}{\phi^{m-2k} I^{-1} h_{2} }_1 =\sce{D_1 h_1}{D_1 (\phi^{m-2k} I^{-1} h_{2}) }_1 \\
&\stackrel{\eqref{diracf}}{=}\sce{D_1 h_1}{\phi^{m-2k} D_1 I^{-1} h_{2}}_1+ \sce{D_1 h_1}{(\exte-\inti_1)(\Id \phi^{m-2k}) I^{-1} h_{2} }_1 \\
&=\sce{D_1 h_1}{\phi^{m-2k}  I^{-1} D_1 h_{2}}_1+ \sce{D_1 h_1}{ (\exte-\inti_1)(\Id\phi/\phi) \tau I^* h_{2} }_1 \\
&=\sce{D_1 h_1}{I^*(\phi^2 \Id+\phi^{-2}\delta) h_{2}}_1+ \underbrace{\sce{\tau I (\inti_1-\exte)(\Id\phi/\phi)D_1 h_1}{h_{2} }_{2}}_{=:A} \\
&=\sce{ I D_1 h_1}{(\phi^2 \Id+\delta_{2}-\inti_1(\Id\phi^{-2})\tfrac{\tau}{2}) h_{2}}_{2}+ A\\
&=\sce{ I D_1 h_1}{(\delta_{2}+\phi^2 [\Id-\inti_{2}(\Id\phi^{-2})\tfrac{\tau}{2}]) h_{2}}_{2}+ A,\text{ \small(since $\inti_1 = \phi^2 \inti_{2}$)} \\
&=\sce{  ([\delta_{2}-\tfrac{\tau}{2} \cdot\exte(\Id(\phi^{-2}))]\phi^2\id+ \Id ) I D_1 h_1}{ h_{2}}_{2}+ A \\
&=\sce{  (\delta_{2}\circ\phi^2 \id +\tau \cdot\exte(\Id\phi/\phi)+ \Id ) I D_1 h}{ h_{2}}_{2}+A \\
&=\sce{ D_{2}\circ\phi^2  I D_1 h}{ h_{2}}_{2}
+\sce{  (\Id (1-\phi^2) I D_1 h_1}{ h_{2}}_{2} \\
&\quad +\sce{\tau\cdot \exte(\Id\phi/\phi) I D_1 h_1}{ h_{2}}_{2}
+A \\
&=\sce{ D_{2}\circ\phi^2  I D_1 h_1}{ h_{2}}_{2}
+\sce{  (\Id (1-\phi^2) I D_1 h_1}{ h_{2}}_{2} \\
&\quad+\sce{\tau \cdot\inti_1(\Id\phi/\phi)I D_1 h_1}{h_{2} }_{2}
\end{align*}
}
Altogether we get the decomposition
\begin{multline*}
V=R_{2}^n
\Big(
D_{2} I \phi^{-2}D_1
+D_{2} I (1-\phi^{-2})\Id 
+D_{2} I \phi^{-2}\inti_1(\Id\phi/\phi)\tau \\
-D_{2}\circ\phi^2  I D_1 
- \Id (1-\phi^2) I D_1 
- \tau \inti_1(\Id\phi/\phi)I D_1
\Big)R^n_1,
\end{multline*}
and the claimed formula follows from  $\Id\phi/\phi=\Id\cf$.
\end{proof}

In the sequel the symbol $\IJJ^p$ denotes the $p$-th Schatten class, $p\in [1,\infty]$, of bounded operators acting between two Hilbert spaces (so that $p=1$ is the trace class, $p=2$ is the Hilbert-Schmidt class and $p=\infty$ is the compact class etc.). We will freely use the following well-known facts, valid for all bounded operators $A$, $B$, $C$ whose image and preimage spaces fit together:
\begin{align}\label{schatten}
&\|A\|_{\IJJ^p}=\|A^*\|_{\IJJ^p}, \>\|ABC\|_{\IJJ^p}\leq \|A\|\|B\|_{\IJJ^p}\|C\|\text{ for all $p\in [1,\infty]$}\\
&\|AB\|_{\IJJ^1}\leq \|A\|_{\IJJ^p}\|B\|_{\IJJ^q}\text{ for all $p,q\in (1,\infty)$ with $\tfrac{1}{p}+\tfrac{1}{q}=1$.}
\end{align}

Note that we will apply the above notation fiberwise, as well as in the $\IL^2$-sense.\\
For any smooth vector bundle $E\to M$ let
$$
E^*\boxtimes E=\bigsqcup_{(x,y)\in M\times M}\hspace{-2ex} E^*_y \otimes E_x \longrightarrow M\times M
$$
denote the corresponding (smooth) exterior bundle. One has:

\begin{proposition}\label{estim} Assume that $g\in \IMM_{K,h}(M)$ for some pair $(K,h)$. Then for all $n\in\IN$ with $n\geq m/4+2$ there is a $C=C(m,n)>0$, such that for all 
\begin{align}
\lambda>K \lceil\tfrac{m}{2}\rceil\lfloor\tfrac{m}{2}\rfloor+1=K\max_{j=0,\dots,m}j(m-j) +1,\label{ew}
\end{align}
the operator $R^n_{g,\lambda}$ is an integral operator, with a Borel integral kernel
$$
M\times M\ni (x,y)\longmapsto R^n_{g,\lambda}(x,y)\in \mathrm{Hom}\left(\Wedge \IT^*_y M,\Wedge \IT^*_x M\right)\subset \Wedge\IT^* M\boxtimes\Wedge\IT M
$$
which satisfies
$$
\int_M\left|R^n_{g,\lambda}(x,y)\right|^2_{\IJJ^2}\mu_g(\Id y)\leq Ch(x)^{-m}\text{ for all $x\in M$}.
$$
\end{proposition}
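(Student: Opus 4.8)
The plan is to reduce the assertion to a pointwise ``ultracontractive'' bound for $R^n_{g,\lambda}$ and then pass to the Hilbert--Schmidt kernel estimate by duality. Concretely, I would first establish that there is $C=C(m,n)>0$ such that for every $\lambda$ satisfying \eqref{ew} and every $f\in\Omega_{\IL^2}(M,g)$,
\[
\bigl|R^n_{g,\lambda}f(x)\bigr|_g\leq C\,h(x)^{-m/2}\,\|f\|_g\qquad\text{for all }x\in M.
\]
Granting this, the kernel and its bound are obtained fibrewise: for fixed $x$ the map $\Lambda_x\colon f\mapsto R^n_{g,\lambda}f(x)$ is bounded from $\Omega_{\IL^2}(M,g)$ into the finite-dimensional space $\Wedge\ITs_x M$ with $\|\Lambda_x\|\leq C\,h(x)^{-m/2}$, hence Hilbert--Schmidt; writing $R^n_{g,\lambda}(x,y)^{\ast}\xi=(\Lambda_x^{\ast}\xi)(y)$ for $\xi\in\Wedge\ITs_x M$ and summing over an orthonormal basis of $\Wedge\ITs_x M$ gives
\[
\int_M\bigl|R^n_{g,\lambda}(x,y)\bigr|_{\IJJ^2}^2\,\mu_g(\Id y)\leq 2^m\,\|\Lambda_x\|^2\leq 2^m C^2\,h(x)^{-m},
\]
the factor $2^m=\dim\Wedge\ITs_x M$ being absorbed into $C(m,n)$. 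Borel measurability of $(x,y)\mapsto R^n_{g,\lambda}(x,y)$ follows from the local representations produced in the pointwise step.

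To prove the pointwise bound I would first remove the zeroth-order curvature by the Bochner--Weitzenb\"ock formula $\Delta^{(j)}_g=\nabla_{g,j}^{\ast}\nabla_{g,j}+\mathcal{W}^{(j)}_g$. Since $\Q_g\geq -K$, the Weitzenb\"ock endomorphism on $j$-forms satisfies $\mathcal{W}^{(j)}_g\geq -K\,j(m-j)$ (the case $j=1$ being precisely the Remark preceding Proposition~\ref{prop:inj}, where $j(m-j)=m-1$). Consequently the threshold \eqref{ew} is chosen exactly so that $H^{(j)}_g+\lambda\geq\nabla_{g,j}^{\ast}\nabla_{g,j}+1\geq1$ in every degree. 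In particular $\lambda\geq1$, so the spectral calculus yields the uniform operator bounds
\[
\bigl\|H_g^{k}R^n_{g,\lambda}\bigr\|\leq\sup_{t\geq0}\frac{t^{k}}{(t+1)^{n}}=:C(n,k)<\infty,\qquad 0\leq k\leq n,
\]
independent of the (large) parameter $\lambda$; this is what will keep the final constant free of $\lambda$.

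For the pointwise estimate itself, fix $x$ and put $r:=\min\{1,r_g(x,p,q)\}\geq h(x)$, working in a $\Delta^{(0)}_g$-harmonic chart on $B_g(x,r)$. There \eqref{harm1}--\eqref{harm2} hold, so after trivializing $\Wedge\ITs M$ over the chart the operator $H_g$ becomes a uniformly elliptic second-order system whose coefficients are controlled in $\IL^\infty$ and $\mathsf{W}^{1,p}$ by $(m,p,q)$, and covariant derivatives are comparable to coordinate derivatives through Christoffel symbols bounded in $\IL^p$ by the same data. Setting $u:=R^n_{g,\lambda}f$ and rescaling $B_g(x,r)$ to the unit ball, I would iterate interior $\IL^2$-elliptic regularity $n$ times to bound the rescaled $\mathsf{W}^{2n,2}$-norm of $u$ on a concentric smaller ball by $\sum_{k=0}^{n}\|H_g^{k}u\|_g\leq C(n)\,\|f\|_g$, using the bounds of the previous paragraph. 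Since $n\geq m/4+2$ forces $2n>m/2$, the Sobolev embedding $\mathsf{W}^{2n,2}\hookrightarrow\ICC^{0}$ applies; carrying the radius-scaling of the elliptic and Sobolev constants through the rescaling, the lowest-order term contributes the factor $r^{-m/2}\leq h(x)^{-m/2}$, which yields the claimed pointwise inequality.

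The principal difficulty is the systems/bundle character of the problem: in contrast to the scalar estimate \eqref{ellest1}, here $H_g$ acts on the full bundle $\Wedge\ITs M$, so both the iterated elliptic constants and the comparison of $\nabla_g$ with coordinate derivatives must be made to depend only on $(m,p,q,n)$ and to scale correctly in $r$. This is exactly where the harmonic-coordinate bounds \eqref{harm1}--\eqref{harm2} from \cite{anderson,HPW} enter, in tandem with the Weitzenb\"ock shift that converts the lower bound $\Q_g\geq -K$ into the spectral gap encoded in \eqref{ew}; the $\lambda$-independence of $C$ then rests on the uniform bound $\|H_g^{k}R^n_{g,\lambda}\|\leq C(n,k)$ valid for all $\lambda\geq1$.
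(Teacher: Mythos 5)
Your overall architecture --- first a pointwise bound $\bigl|R^n_{g,\lambda}f(x)\bigr|_g\leq C\,h(x)^{-m/2}\|f\|_g$, then fibrewise Riesz representation and summation over a frame of $\Wedge\ITs_xM$ to convert it into the Hilbert--Schmidt kernel estimate --- is the same as the paper's, and that duality step is correct as you state it (your normalization $h^{-m/2}$ is indeed the one that squares to the claimed $h^{-m}$). The gap is in the step that carries all the weight: your proof of the pointwise bound. You assert that in a $\mathsf{W}^{1,p}_g$-harmonic chart, after trivializing $\Wedge\ITs M$, the operator $H_g$ becomes a uniformly elliptic system ``whose coefficients are controlled in $\IL^\infty$ and $\mathsf{W}^{1,p}$ by $(m,p,q)$''. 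This is false. The bounds \eqref{harm1}--\eqref{harm2} control only $g_{ij}$ and $\partial_k g_{ij}$; the Hodge Laplacian on forms, written in any such chart, has zeroth-order coefficients containing $\partial^2 g$ (derivatives of Christoffel symbols, equivalently the Weitzenb\"ock term $V_j=\Delta^{(j)}-\nabla^{\dagger}_{j}\nabla_{j}$), and the hypothesis $g\in\IMM_{K,h}(M)$ provides only the one-sided bound $\Q_g\geq -K$, with no $\IL^\infty$ or $\IL^p$ control whatsoever on curvature. (The scalar case is special precisely because in harmonic coordinates $\Delta^{(0)}_g=-g^{ij}\partial_i\partial_j$ has no lower-order terms; that is what makes the estimate \eqref{ellest1}, i.e. Theorem B.1 of \cite{HPW}, available under mere $\mathsf{W}^{1,p}$ control.) Compounding this, iterating interior $\IL^2$-regularity up to $\mathsf{W}^{2n,2}$ would require roughly $2n-2$ derivatives of the coefficients, and the classical substitute for rough coefficients --- De Giorgi--Nash--Moser $\IL^2\to\IL^\infty$ bounds --- is a strictly scalar tool that fails for systems. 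So your pointwise estimate is not established, and it cannot be established along this route under the stated hypotheses.

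The idea you are missing, and the one the paper uses, is to avoid elliptic theory for the system altogether: by Bochner--Lichnerowicz--Weitzenb\"ock and the Gallot--Meyer estimate $V_j\geq -K\,j(m-j)$ (valid under $\Q_g\geq-K$), semigroup domination for covariant Schr\"odinger semigroups (Theorem 2.13 of \cite{guen}) yields $\bigl|\mathrm{e}^{-tH^{(j)}}\alpha(x)\bigr|\leq \mathrm{e}^{-t(H^{(0)}-jK(m-j))}|\alpha|(x)$, and the Laplace-transform representation of $(S+\lambda)^{-n}$ transfers this domination to resolvent powers. After that, only the \emph{scalar} operator $(H^{(0)}+1)^{-n}$, applied to the nonnegative function $|\alpha|$, needs a harmonic-coordinate estimate; the threshold \eqref{ew} is exactly what absorbs the degree-dependent shift $jK(m-j)$. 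If you insert this domination step, your uniform-in-$\lambda$ spectral bounds and your duality argument can be kept essentially as written; your measurability claim should, however, be replaced by an explicit argument, e.g. the paper's orthonormal-basis expansion of $(x,y)\mapsto R^n_{g,\lambda}(x,y)$.
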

\begin{proof} We will omit the $g$'s in the notation. By the
Bochner-Lichnerowicz-Weitzenb\"ock formula, one has that
\begin{align}\label{pot1}
V_{j}:=\Delta^{(j)}-\nabla^{\dagger}_{j}\nabla_{j}\in\IDD^{(0)}_{\ICC^{\infty}}\bigl(M;\Wedge^j
\ITs M\bigr).
\end{align}
Moreover, the Gallot-Meyer estimate \cite{gallot}  states that under $\Q\geq -K$ one has
\begin{align}
V_{j}\geq -K\cdot j (m-j).\label{pot2}
\end{align}
Then it follows from \eqref{pot1}, \eqref{pot2}
and semigroup domination for covariant Schr\"odinger semigroups (cf.
Theorem 2.13 in \cite{guen}) that
\begin{equation}
\left|\exp\left[-t H^{(j)}\right]\alpha (x)\right|\leq  \exp\left[-t
\big(H^{(0)}-jK (m-j)\big)\right]\left|\alpha \right|(x)\label{semi}
\end{equation}
for all $t\geq 0$, $\alpha\in\Omega^j_{\IL^2}(M)$, where on the rhs,
$x\mapsto \left|\alpha(x) \right|$ is considered as a nonnegative
element of $\IL^2(M)$. But as for any self-adjoint operator $S$ with
$S\geq c$ for some $c\in\IR$, and any $z\in\IR$ with $z<c$ one has
$$
(S-z)^{-n}=\Gamma(n)\int^{\infty}_0 t^{n-1} \mathrm{e}^{tz }\mathrm{e}^{-t
S}\Id t,
$$
we immediately obtain from \eqref{semi} the pointwise inequality
\[
\left|( H^{(j)}+\lambda)^{-n}\alpha (x)\right|\leq  \big(H^{(0)}-jK
(m-j)+\lambda\big)^{-n}\left|\alpha \right|(x).
\]
Next, from a scalar elliptic resolvent estimate in harmonic coordinates
(cf. Theorem B.1 in \cite{HPW}) and the assumption $g\in\IMM_{K,h}(M)$ one
gets a $C_2=C_2(m,n)>0$ such that
\[
( H^{(0)}+1)^{-n}\left|\alpha \right|(x)\leq C_2 
\left\|\alpha\right\|h(x)^{-m},
\]
so putting everything together, keeping (\ref{ew}) in mind, we have for
all $\alpha\in\Omega_{\IL^2}^j(M)$ the estimate
\[
\left|( H^{(j)}+\lambda)^{-n}\alpha (x)\right|\leq  C_2 
\left\|\alpha\right\|h(x)^{-m},
\]
which using
\[
R^n_{\lambda}=( H+\lambda)^{-n}=\bigoplus^m_{j=0}( H^{(j)}+\lambda)^{-n}
\]
implies the existence of a $C_3=C_3(m,n)>0$ such that for all
$\alpha\in\Omega_{\IL^2}(M)$ one has
\begin{align}\label{bound1}
\left|R^n_{\lambda}\alpha (x)\right|\leq  C_2 
\left\|\alpha\right\|h(x)^{-m}.
\end{align}
Now let $\{e_{J}\}_J$ denote a globally defined Borel measurable
$g$-orthonormal frame for $\Wedge\ITs M$ (which of course cannot be chosen
smooth in general, but will not need any further regularity than
measurability). Combining (\ref{bound1}) with Riesz-Fischer\rq{}s
representation theorem for bounded functionals, keeping in mind that $R^n_{\lambda}\alpha$ has a continuous (in fact, a $\mathsf{C}^4$-) representative by the Sobolev embedding theorem, for any $x\in M$ and any
index $J$ we get a unique $R^{n}_{\lambda,x,J}\in \Omega_{\IL^2}(M)$ such
that for all $\alpha\in\Omega_{\IL^2}(M)$ one has 
$$
\int_M\big(R^{n}_{\lambda,x,J}(y),\alpha(y)\big) \mu(\Id y) =
\big(R^n_{\lambda}\alpha (x), e_{J}(x)\big).
$$
Moreover the norm of $R^{n}_{\lambda,x,J}$ is bounded according to $\|R^{n}_{\lambda,x,J}\|\leq C_2$. Defining the
Borel section
\begin{align*}
M\ni x\longmapsto  R^{n}_{\lambda,x}(y) \in  \mathrm{Hom}\left(\Wedge
\IT^*_y M,\Wedge
\IT^*_x M\right),\>R^{n}_{\lambda,x}(y)^*e_{J}(x):=R^{n}_{\lambda,x,J}(y),
\end{align*}
we end up with the formula
$$
R^n_{\lambda}\alpha(x)=\int_MR^{n}_{\lambda,x}(y)\alpha(y)\mu(\Id y).
$$
It remains to show that $ (x,y)\mapsto R^n_{\lambda}(x,y)$ has a jointly
Borel $\mu$-version: To this end, it is sufficient to prove that $(x,y)\mapsto
R^{n}_{\lambda,x,J}(y)$ has a jointly Borel $\mu$- version. Pick a countable ONB
$(\phi_l)_{l\in\IN}$ of $\Omega_{\IL^2}(M)$. In view of
\[
\sce{R^{n}_{\lambda,x,J}}{\phi_l}=
\int_M\big(R^{n}_{\lambda,x,J}(y),\phi_l(y)\big) \mu(\Id y)
=\big(R^n_{\lambda}\phi_l (x), e_{J}(x)\big),
\]
we know that $x\mapsto \left\langle
R^{n}_{\lambda,x,J},\phi_l\right\rangle$ is Borel for all $l$. But now the
$\IL^2$-expansion 
$$
R^{n}_{\lambda,x,J}=\sum_{l\in\IN}\left\langle
R^{n}_{\lambda,x,J},\phi_l\right\rangle  \phi_l
$$
implies that $(x,y)\mapsto R^n_{\lambda}(x,y)$ can be chosen jointly Borel, as the rhs of the latter equation \emph{is} jointly Borel, and the proof is complete.
\end{proof}

Now we can formulate and prove our main result on the existence and completeness of the wave operators $W_{\pm}(H_{\gg},H_g, I)$. We refer the reader to Section \ref{wave} for some corresponding functional analytic notation.

\begin{theorem}\label{main} Let $\cf:M\to \IR$ be smooth with $\cf$, $|\Id\cf|_g$ bounded, and assume that $g,\gg\in\IMM_{K,h}(M)$ for some pair $(K,h)$, in a way such that $\cf$ is a $h$-scattering perturbation of $g$. Then the wave operators 
$$
W_{\pm}(H_{\gg},H_g, I)=\slim_{t\to\pm\infty}\mathrm{e}^{\mathrm{i}tH_{\gg}}I\mathrm{e}^{-\mathrm{i}tH_{g}}P_{\mathrm{ac}}(H_g)
$$
exist and are complete. Moreover, the $W_{\pm}\big(H_{\gg},H_{g}, I\big)$ are partial isometries with inital space $\mathrm{Im}  P_{\mathrm{ac}}(H_g)$ and final space $\mathrm{Im}  P_{\mathrm{ac}}(H_{\gg})$.
\end{theorem}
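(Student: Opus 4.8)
The plan is to invoke the Belopol'skii--Birman theorem (Theorem \ref{belo}), whose hypotheses are: (i) the identification operator $I$ maps $\dom(\mathcal{Q}_g)$ to $\dom(\mathcal{Q}_{\gg})$ and is ``asymptotically isometric'' in the appropriate sense relative to $P_{\mathrm{ac}}$; and (ii) the difference $R_{\gg,\lambda}^n(H_{\gg}I - I H_g)R_{g,\lambda}^n$ lies in the trace class $\IJJ^1$ for some $n$ and some $\lambda>0$. Condition (i) is already essentially supplied by Proposition \ref{conf}, parts \ref{conf:b}) and \ref{conf:c}): since $\cf$ and $|\Id\cf|_g$ are bounded, one has $I\dom(\mathcal{Q}_g)=\dom(\mathcal{Q}_{\gg})$ and $I^* = \mathrm{e}^{\cf\tau}I^{-1}$, from which the quasi-isometry of $g$ and $\gg$ gives that $I$ is bounded with bounded inverse and that $I^*I-\id$ is controlled by $\sinh(2|\cf|)$ (via the fiberwise expansion of $\mathrm{e}^{\cf\tau}$). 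So the main work is entirely in verifying the trace-class condition (ii).

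For condition (ii) I would first apply the decomposition formula of Proposition \ref{decompo}, which writes $V:=R_{\gg,\lambda}^n(H_{\gg}I - IH_g)R_{g,\lambda}^n$ as a sum of five terms, each of the schematic form $R_{\gg,\lambda}^n \cdot A \cdot B \cdot I \cdot C \cdot R_{g,\lambda}^n$ where the middle factor $B$ is multiplication by one of the functions $2\sinh(2\cf)$, $(1-\mathrm{e}^{\mp2\cf})$, or $\inti(\Id\cf)$, all of which are pointwise bounded by a constant times $\Id(g,\cf)(x)$ from \eqref{firts}, and where $A,C$ are among the first-order operators $D_g$, $\Id$, $\tau\,\inti(\Id\cf)$. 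The key trick, flagged in the introduction, is the commutation $[A,R_{g,\lambda}^n]=0$ whenever $A$ commutes with the Laplacian (e.g.\ $A=D_g$ commutes with $\Delta_g=D_g^2$); this lets me move each surviving differential factor off to the side and absorb it into the bounded operators $D_g R_{g,\lambda}^{1/2}$-type expressions, leaving a core of the form $R_{\gg,\lambda}^{n}\,\Id(g,\cf)\,R_{g,\lambda}^{n}$ (times bounded factors) whose trace norm I must control.

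To estimate that core I would split $V = V\cdot R_{g,\lambda}^{n}\cdots$ so that it factors as (bounded)$\,\circ R_{\gg,\lambda}^{n}\,\Id(g,\cf)^{1/2}$ composed with $\Id(g,\cf)^{1/2}R_{g,\lambda}^{n}\circ$(bounded), and estimate each factor in the Hilbert--Schmidt norm $\IJJ^2$, using $\|V\|_{\IJJ^1}\le\|V_1\|_{\IJJ^2}\|V_2\|_{\IJJ^2}$ from \eqref{schatten}. Here Proposition \ref{estim} is the decisive input: it gives the integral kernel bound $\int_M|R_{g,\lambda}^n(x,y)|_{\IJJ^2}^2\,\mu_g(\Id y)\le C\,h(x)^{-m}$, and an identical bound holds for $\gg$ since $\gg\in\IMM_{K,h}(M)$ too. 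Multiplying the kernel of $\Id(g,\cf)^{1/2}R_{g,\lambda}^n$ and integrating, the Hilbert--Schmidt norm squared becomes $\int_M \Id(g,\cf)(x)\,h(x)^{-m}\,\mu_g(\Id x)$ up to the extra $h^{-2}$ from the first-order factors, which is exactly the finite quantity \eqref{inrt} defining an $h$-scattering perturbation. The main obstacle I anticipate is the bookkeeping in making the commutator manipulation rigorous as an identity of bounded (not merely formally defined) operators, and in carefully matching the two metrics $g,\gg$: the middle factors and the measures $\mu_g$ versus $\mu_\gg$ must be converted using Proposition \ref{conf}a), and one must check that the boundedness of $\cf$ keeps all conversion factors $\mathrm{e}^{\pm c\cf}$ uniformly bounded so that nothing escapes the integrability estimate. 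Once $V\in\IJJ^1$, Belopol'skii--Birman yields existence and completeness of $W_\pm(H_{\gg},H_g,I)$, and the partial-isometry statement with the stated initial and final spaces follows from the general structure theory of complete wave operators.
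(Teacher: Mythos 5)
Your handling of the trace-class condition follows the paper's own proof almost verbatim: the decomposition of Proposition \ref{decompo}, the commutation $[D_k,R^n_{k,\lambda}]=0$ under completeness, splitting the middle multiplication operators into two square-root factors, and estimating each half in $\IJJ^2$ via the kernel bound of Proposition \ref{estim} is exactly how the paper establishes \eqref{spur}. (One small bookkeeping remark: your ``extra $h^{-2}$ from the first-order factors'' does not actually occur --- the whole point of the commutator trick is that the factors $D_kR^{n/2}_{k,\lambda}$ are absorbed as \emph{bounded} operators by the spectral theorem, so the resulting bound is $\int_M\Id(g,\cf)\,h^{-m}\,\mu_g(\Id x)$, which is finite because $h\le 1$ makes it dominated by \eqref{inrt}.)

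The genuine gap is in your condition (i). Theorem \ref{belo} requires $(I^*I-1)E_{H_g}(S)\in\IJJ^{\infty}$, i.e.\ \emph{compactness}, for every bounded interval $S$, and this does not follow from Proposition \ref{conf}. That proposition only yields $I^*I=\mathrm{e}^{\cf\tau}$ and hence the operator-norm control by $\sinh(2\|\cf\|_\infty)$ that you describe; but a mere norm bound on $I^*I-1$ cannot give compactness --- multiplication by a bounded function times a spectral projection with infinite-dimensional range is generically non-compact when $H_g$ has continuous spectrum. The paper closes this by proving the stronger Hilbert--Schmidt statement \eqref{hilbert}, namely $\|(I^*I-1)R^n_{g,\lambda}\|_{\IJJ^2}<\infty$: one projects the kernel estimate of Proposition \ref{estim} to each form degree $j$, uses the pointwise inequality $|\mathrm{e}^{(m-2j)\cf(x)}-1|^2\le C(m,\cf)\sinh(2|\cf(x)|)$, and integrates the kernel bound against this, obtaining a bound by $C\,\Id_h(g,\cf)<\infty$. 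In other words, the $h$-scattering hypothesis enters a \emph{second} time here, not only in the trace-class estimate; compactness of $(I^*I-1)E_{H_g}(S)$ then follows from writing $E_{H_g}(S)=R^n_{g,\lambda}(H_g+\lambda)^nE_{H_g}(S)$ with the second factor bounded. Your proposal, which declares condition (i) ``already essentially supplied by Proposition \ref{conf},'' would fail at exactly this step; the fix requires the same kernel-estimate-plus-integrability machinery that you correctly deploy for condition (ii).
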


\begin{proof} Let $g_1:=g$, $g_2:=\gg$. In view of Proposition \ref{conf}\ref{conf:b}), and keeping in mind (\ref{schatten}) and that for all bounded intervals $S\subset \IR$, $\ell\in\IR$, $r>0$, one has
$$
E_{H_j}(S)(H_j+r)^{\ell}=(H_j+r)^{\ell}E_{H_j}(S)\in\ILL(\Omega_{\IL^2}(M,g_j)),
$$
we see that all assumptions of the Belopolskii-Birman theorem (cf. Theorem \ref{belo} below) are satisfied, once we can show that for all $\lambda$ as in Proposition \ref{estim} and all even $n\geq m/2+4$ one has
\begin{align}
\label{hilbert}
&\left\|\big(I^*I-1\big)R^{n}_{1,\lambda}\right\|_{\IJJ^2}<\infty,\\
&\label{spur}
\left\|R_{2,\lambda}^n \big(H_{2} I -I H_1 \big)R_{1,\lambda}^n\right\|_{\IJJ^1}<\infty.
\end{align}

In order to see (\ref{hilbert}), we just have to note $I^*I=\mathrm{e}^{\cf\tau}$, and that by projecting, Proposition \ref{estim} shows that at each degree the operator $R^{(j),n}_{1,\lambda}$ is an integral operator, with a Borel integral kernel
$$
M\times M\ni (x,y)\longmapsto R^{(j),n}_{1,\lambda}(x,y)\in \mathrm{Hom}\bigl(\Wedge^j \ITs_y M,\Wedge^j \ITs_xM\bigr)
$$
that satisfies the same estimate as $R^{n}_{1,\lambda}(x,y)$. It follows that $\bigl(\mathrm{e}^{(m-2j)\cf}-\mathrm{id}\bigr)R^{(j),n}_{1,\lambda}$ is an integral operator, and thus we get
\begin{align*}
&\left\|\big(\mathrm{e}^{\cf\tau}-1\big)R^{n}_{1,\lambda}\right\|_{\IJJ^2}^2\\
&\leq C_1(m)\int_M\int_M \left|(\mathrm{e}^{(m-2j)\cf(x)}-1)\right|^2\bigl|R^{(j),n}_{1,\lambda}(x,y)\bigr|^2_{\IJJ^2}\mu_1(\Id x)\mu_1(\Id y)\\
&\leq C_2(\cf) \int_M h(x)^{-m}\Id(g_1,\cf)(x) \mu_1(\Id x)=C_2(\cf) \Id_{h}(g_1,\cf)<\infty,
\end{align*}
where we have used that $\cf$ is bounded, so that
\[
|\mathrm{e}^{(m-2j)\cf(x)}-1|^2\le \|\mathrm{e}^{(m-2j)\cf}-1\|_\infty |\mathrm{e}^{(m-2j)\cf(x)}-1|
\le C(m,\cf) \sinh(2|\cf(x)|).
\]
It remains to prove \eqref{spur}, which will be shown using the decomposition formula \eqref{vdec}. We only show how to estimate the first summand (noting that in view of Lemma \ref{inn} the other summands can be treated analogously): \\
Let $S(x)=\sinh(2\cf(x))^{1/2}$ be a (complex) square root.
Since under completeness one has $[D_j ,R^n_{j,\lambda}]=0$, we have
\begin{align*}
R^n_{2,\lambda}D_2\sinh(2\cf) I D_1R^n_{1,\lambda}=D_2R^{n/2}_{2,\lambda}R^{n/2}_{2,\lambda}S(x)S(x) I R^{n/2}_{1,\lambda}R^{n/2}_{1,\lambda}D_1
\end{align*}
As $D_k R^n_{k,\lambda}=R^n_{k,\lambda}D_k$ and $I^{-1}$ are bounded, it follows from $SI=I^{-1}S$ that
\begin{align*}
&\bigl\|R^n_{2,\lambda}D_2\sinh(2\cf)  I D_1R^n_{1,\lambda}\bigr\|_{\IJJ^1} \\
&\leq \bigl\|D_2R^{n/2}_{2,\lambda}\bigr\|\bigl\|R^{n/2}_{2,\lambda}S\bigr\|_{\IJJ^2}\bigl\|S I R^{n/2}_{1,\lambda}\bigr\|_{\IJJ^2}\bigl\|R^{n/2}_{1,\lambda}D_1\bigr\|\\
&\leq C\bigl\|R^{n/2}_{2,\lambda}S\bigr\|_{\IJJ^2}\bigl\|S R^{n/2}_{1,\lambda}\bigr\|_{\IJJ^2}
= C\bigl\|\bigl(R^{n/2}_{2,\lambda}S\bigr)^*\bigr\|_{\IJJ^2}\bigl\|S R^{n/2}_{1,\lambda}\bigr\|_{\IJJ^2}\\
&= C\bigl\|S R^{n/2}_{2,\lambda}\bigr\|_{\IJJ^2}\bigl\|S R^{n/2}_{1,\lambda}\bigr\|_{\IJJ^2}.
\end{align*}
However, by Proposition \ref{estim}, the operator $S R^{n/2}_{k,\lambda}$ is an integral operator which satisfies
$$
\bigl\|S R^{n/2}_{k,\lambda}\bigr\|_{\IJJ^2}\leq C(m,n) \sqrt{\Id_{h}(g_1,\cf)}<\infty.
$$ 
This completes the proof.
\end{proof}

\begin{corollary}\label{main2} Under the assumptions of Theorem \ref{main}, let $j\in \{0,\dots,m\}$ and let 
$$
I^{(j)}=I^{(j)}_{g,g_{\cf}}:\Omega^j_{\IL^2}(M,g)\longrightarrow\Omega^j_{\IL^2}(M,\gg),\>\omega\longmapsto \omega
$$
be the canonical identification acting on $j$-forms. Then the wave operators 
$$
W_{\pm}\bigl(H^{(j)}_{\gg},H^{(j)}_g, I^{(j)}\bigr)=\slim_{t\to\pm\infty}\mathrm{e}^{-\mathrm{i}tH^{(j)}_{\gg}}I^{(j)}\mathrm{e}^{\mathrm{i}tH^{(j)}_{g}}P_{\mathrm{ac}}\left(H^{(j)}_g\right)
$$
exist and are complete. Moreover, the $W_{\pm}\big(H^{(j)}_{\gg},H^{(j)}_{g}, I\big)$ are partial isometries with inital space $\mathrm{Im}  P_{\mathrm{ac}}(H^{(j)}_g)$ and final space $\mathrm{Im}  P_{\mathrm{ac}}(H^{(j)}_{\gg})$.
\end{corollary}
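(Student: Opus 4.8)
The plan is to deduce the degreewise statement directly from Theorem \ref{main}, exploiting the fact that every operator occurring in the definition of the total wave operators respects the grading $\Omega_{\IL^2}(M,g)=\bigoplus_{j=0}^m\Omega^j_{\IL^2}(M,g)$. First I would record that, since the conformal identification $\omega\mapsto\omega$ preserves the degree of a form, the operator $I$ is block-diagonal with $I=\bigoplus_{j=0}^m I^{(j)}$, and likewise $H_g=\bigoplus_j H^{(j)}_g$, $H_{\gg}=\bigoplus_j H^{(j)}_{\gg}$. Consequently the unitary groups split as $\mathrm{e}^{-\mathrm{i}tH_g}=\bigoplus_j\mathrm{e}^{-\mathrm{i}tH^{(j)}_g}$ and $\mathrm{e}^{\mathrm{i}tH_{\gg}}=\bigoplus_j\mathrm{e}^{\mathrm{i}tH^{(j)}_{\gg}}$, and the absolutely continuous projections split as $P_{\mathrm{ac}}(H_g)=\bigoplus_j P_{\mathrm{ac}}(H^{(j)}_g)$ and $P_{\mathrm{ac}}(H_{\gg})=\bigoplus_j P_{\mathrm{ac}}(H^{(j)}_{\gg})$, the latter being the standard fact that the absolutely continuous subspace of a finite orthogonal direct sum of self-adjoint operators is the direct sum of the absolutely continuous subspaces.

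Next I would combine these decompositions to write, for each $t$,
\[
\mathrm{e}^{\mathrm{i}tH_{\gg}}I\mathrm{e}^{-\mathrm{i}tH_g}P_{\mathrm{ac}}(H_g)=\bigoplus_{j=0}^m \mathrm{e}^{\mathrm{i}tH^{(j)}_{\gg}}I^{(j)}\mathrm{e}^{-\mathrm{i}tH^{(j)}_g}P_{\mathrm{ac}}(H^{(j)}_g).
\]
Because the sum over $j$ is \emph{finite} and every block is a contraction, strong convergence of the left-hand family as $t\to\pm\infty$ is equivalent to strong convergence of each of the finitely many blocks separately, with the limit being the orthogonal direct sum of the block limits. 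Since Theorem \ref{main} already guarantees that the left-hand side converges strongly, each block $W_\pm\bigl(H^{(j)}_{\gg},H^{(j)}_g,I^{(j)}\bigr)$ exists, and $W_\pm(H_{\gg},H_g,I)=\bigoplus_{j=0}^m W_\pm\bigl(H^{(j)}_{\gg},H^{(j)}_g,I^{(j)}\bigr)$. The circumstance that the corollary's sign convention in the exponents is opposite to the one in Theorem \ref{main} is immaterial: the substitution $t\mapsto -t$ merely interchanges the roles of $W_+$ and $W_-$.

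Finally I would transfer the partial-isometry structure — which encodes completeness — blockwise. Writing $W_\pm:=W_\pm(H_{\gg},H_g,I)=\bigoplus_j W_\pm^{(j)}$, the operator $W_\pm^*W_\pm$ is an orthogonal projection; being block-diagonal, this forces each $(W_\pm^{(j)})^*W_\pm^{(j)}$ to be a projection, so every block $W_\pm^{(j)}$ is itself a partial isometry, whose initial and final spaces are precisely the degree-$j$ components of those of $W_\pm$. Comparing the two orthogonal decompositions degree by degree — each summand lying in the corresponding $\Omega^j_{\IL^2}$, so that projecting onto $\Omega^j_{\IL^2}$ matches them — together with $\mathrm{Im}\,P_{\mathrm{ac}}(H_g)=\bigoplus_j\mathrm{Im}\,P_{\mathrm{ac}}(H^{(j)}_g)$, $\mathrm{Im}\,P_{\mathrm{ac}}(H_{\gg})=\bigoplus_j\mathrm{Im}\,P_{\mathrm{ac}}(H^{(j)}_{\gg})$ and the initial and final spaces of $W_\pm$ furnished by Theorem \ref{main}, I would conclude that $W_\pm^{(j)}$ has initial space $\mathrm{Im}\,P_{\mathrm{ac}}(H^{(j)}_g)$ and final space $\mathrm{Im}\,P_{\mathrm{ac}}(H^{(j)}_{\gg})$; in particular each $W_\pm^{(j)}$ is complete. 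This argument uses no analysis beyond Theorem \ref{main}; the only point requiring (routine) care is the equivalence, for a finite direct sum, between strong convergence of the whole and of each block, so I do not anticipate a genuine obstacle here.
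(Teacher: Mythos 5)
Your proposal is correct and takes essentially the same approach as the paper: the paper's own (much terser) proof likewise decomposes $H_g$, $H_{\gg}$, their spectral measures, and $I$ into degreewise blocks and reads the statement off from Theorem \ref{main}. The details you supply — the equivalence of strong convergence for a finite direct sum with blockwise strong convergence, the blockwise transfer of the partial-isometry structure and of the initial/final spaces, and the remark that the sign discrepancy in the exponents merely swaps $W_+$ and $W_-$ — are exactly the routine verifications the paper leaves implicit.
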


\begin{proof} This follows from Theorem \ref{main}, noting that one has
$$
H_g=\bigoplus^m_{j=0}H^{(j)}_g,\>H_\gg=\bigoplus^m_{j=0}H^{(j)}_\gg
$$
so that we get the corresponding orthogonal decompositions of the spectral measures
$$
E_{H_g}=\bigoplus^m_{j=0}E_{H^{(j)}_g},\>E_{H_\gg}=\bigoplus^m_{j=0}E_{H^{(j)}_\gg},
$$
so that $I=\bigoplus^m_{j=0}I^{(j)}$ completes the proof.
\end{proof}

\section{Applications and examples}\label{sec:4}

This section is devoted to some abstract applications of Theorem \ref{main} which are then applied to some explicit examples. 

\subsection{Applications} 

Firstly, we can handle the prototypical case of metric perturbations, namely topological perturbations:

\begin{corollary}\label{cor:4.1} Assume that $g$ is complete with $\Q_g\geq -K$ for some $K>0$ and that $\tilde{g}$ is a complete metric on $M$ which is conformally equivalent to $g$ and which coincides with $g$ at infinity. Then the assumptions of Theorem \ref{main} (and thus of Corollary \ref{main2}) are satisfied.
\end{corollary}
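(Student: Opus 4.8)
The plan is to verify the three hypotheses of Theorem~\ref{main} for the pair $(g,\tilde g)$, since $\tilde g$ is conformally equivalent to $g$ by assumption, say $\tilde g = \mathrm{e}^{2\cf}g$ for a smooth $\cf:M\to\IR$, and $\tilde g$ coincides with $g$ outside a compact set $K_0\subset M$. The phrase ``coincides with $g$ at infinity'' means precisely that $\cf$ vanishes identically on $M\setminus K_0$; in particular $\cf$ has compact support. The three things to check are: (a) that $\cf$ and $|\Id\cf|_g$ are bounded; (b) that both $g$ and $\tilde g$ lie in a common class $\IMM_{L,h}(M)$ for some pair $(L,h)$; and (c) that $\cf$ is an $h$-scattering perturbation of $g$.

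Item (a) is immediate: a smooth, compactly supported function and its smooth differential are bounded on $M$. For item (c), I would note that $\Id(g,\cf)(x)=\max\{\sinh(2|\cf(x)|),|\Id\cf(x)|_g\}$ is supported in $\supp(\cf)\subset K_0$ and hence is a bounded, compactly supported function. Therefore, once $h$ is chosen so that $h^{-(m+2)}$ is locally integrable against $\mu_g$ on the compact set $K_0$ --- which holds for the $h$ furnished in the next step, since $h$ will be continuous and strictly positive there --- the integral $\Id_h(g,\cf)=\int_M \Id(g,\cf)(x)h(x)^{-(m+2)}\mu_g(\Id x)$ reduces to an integral over the compact set $K_0$ of a bounded integrand, and is thus finite.

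The substantive step is item (b), constructing a common class $\IMM_{L,h}(M)$. Here I would invoke the hypothesis $\Q_g\geq -K$, and observe that since $\tilde g = g$ outside the compact set $K_0$, the curvature endomorphism $\Q_{\tilde g}$ differs from $\Q_g$ only on $K_0$; being continuous on the compact set $\overline{K_0}$, it is bounded below there, so $\Q_{\tilde g}\geq -L_0$ for some $L_0>0$, and enlarging the constant to $L:=\max\{K,L_0\}$ gives $\Q_g,\Q_{\tilde g}\geq -L$. Completeness of both metrics is assumed. For the common lower bound $h$ on the capped harmonic radii, I would fix $p,q$ and set $h:=\min\{1,r_g(\cdot,p,q),r_{\tilde g}(\cdot,p,q)\}$; by Lemma~\ref{lem:harm} each capped harmonic radius is $1$-Lipschitz (hence continuous) and strictly positive, so $h$ is a continuous, strictly positive function with $\min\{1,r_g(\cdot,p,q)\}\geq h$ and $\min\{1,r_{\tilde g}(\cdot,p,q)\}\geq h$. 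Thus $g,\tilde g\in\IMM_{L,h}(M)$.

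The main obstacle --- and the only point requiring genuine care --- is confirming that the $h$ just constructed is positive and continuous, so that both the membership $g,\tilde g\in\IMM_{L,h}(M)$ and the finiteness in item (c) go through with the \emph{same} $h$. Strict positivity of $r_g(\cdot,p,q)$ and $r_{\tilde g}(\cdot,p,q)$ at each point is the nontrivial input, but this is exactly the content remarked after the definition of $\IMM_{K,h}(M)$ (it follows from elliptic theory, or from Proposition~\ref{prop:inj}); the capping at $1$ and the Lipschitz property from Lemma~\ref{lem:harm} then guarantee continuity, and hence local integrability of $h^{-(m+2)}$ over the compact $K_0$. With (a), (b), (c) verified, Theorem~\ref{main} applies directly and yields the conclusion.
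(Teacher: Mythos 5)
Your proposal is correct and follows essentially the same route as the paper: compact support of $\cf$ gives boundedness of $\cf$ and $|\Id\cf|_g$, a lower bound on $\Q_{\tilde g}$ (the paper gets this from the conformal perturbation formulas, you from locality of curvature plus compactness --- both fine), the common bound $h:=\min\{1,r_g(\cdot,p,q),r_{\tilde g}(\cdot,p,q)\}$ via Lemma~\ref{lem:harm}, and finiteness of $\Id_h(g,\cf)$ because the integrand is bounded and supported in a compact set on which $h$ is continuous and positive. No gaps; this matches the paper's argument.
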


\begin{proof} Let $\tilde{g}=\mathrm{e}^{2\cf} g$. By assumption, $\cf$ is compactly supported, so that $\cf$ and $|\Id\cf|_{g}$ are bounded, and in view of \eqref{levi}, $\Q_\gg$ is bounded from below. As the harmonic radius depends continuously on $x$ by Lemma \ref{lem:harm}, it follows that all assumptions of Theorem \ref{main} are satisfied with
$$
h(x):=\min\{1,r_g(x,p,q),r_\gg(x,p,q)\}\text{ for all $p>m$, $1<q<\sqrt{2}$, }
$$
noting that
$$
\Id_h(g,\cf)\leq  C_{\cf}\int_{\supp(\Id(g,\cf))}\hspace{-2ex}h(x)^{-m-2} \mu_g(\Id x) <\infty,
$$
where we have used again that $\cf$ is compactly supported.
\end{proof}

Combining with Theorem 4.1 in \cite{bunke} for the Gauss-Bonnet operator $D_g=\Id+\delta_g$, we obtain 
the following variant of Theorem \ref{main}, which essentially states that under slightly stronger curvature assumptions, we can drop the conformal equivalence on compact subsets:

\begin{corollary}\label{cor:bunke}
Let $(M,g)$ and $(M,\tilde g)$ be conformal at infinity, i.e.~there are a compact set $K\subset M$ and a smooth function $\cf: M\to \Reell$ such that $\tilde g=\mathrm{e}^{2\cf} g$ on $M\setminus K$. Assume further that $\cf$, $|\Id\cf|_g$ are bounded, that $\mathrm{Sec}_g$ is bounded, and that $g,\gg\in\IMM_{L,h}(M)$ for some pair $(L,h)$, in a way such that $\cf$ is a $h$-scattering perturbation of $g$. Then the wave operators 
$
W_{\pm}(H_{\tilde g},H_g, I)$
exist and are complete; moreover they are partial isometries with inital space $\mathrm{Im} \: P_{\mathrm{ac}}(H_g)$ and final space $\mathrm{Im} \: P_{\mathrm{ac}}(H_{\tilde g})$.
\end{corollary}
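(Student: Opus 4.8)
The plan is to interpolate between $g$ and $\tilde g$ through the globally conformal metric $\gg=\mathrm{e}^{2\cf}g$, which by hypothesis coincides with $\tilde g$ on $M\setminus K$, and to factor the desired wave operator through $\gg$ along the chain of metrics $g$, $\gg$, $\tilde g$. First I would note that the pair $(g,\gg)$ meets \emph{all} the hypotheses of Theorem \ref{main}: indeed $\cf,|\Id\cf|_g$ are bounded, $g,\gg\in\IMM_{L,h}(M)$, and $\cf$ is an $h$-scattering perturbation of $g$ (the extra assumption that $\mathrm{Sec}_g$ is bounded is not needed at this stage). Theorem \ref{main} therefore yields at once that $W_\pm(H_\gg,H_g,I_{g,\gg})$ exist, are complete, and are partial isometries with initial space $\mathrm{Im}\,P_{\mathrm{ac}}(H_g)$ and final space $\mathrm{Im}\,P_{\mathrm{ac}}(H_\gg)$.

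Next I would treat $(\gg,\tilde g)$ as a perturbation supported on the compact set $K$. Since $\tilde g$ agrees with the complete metric $\gg$ off $K$, it is itself complete and quasi-isometric to $\gg$, so $I_{\gg,\tilde g}$ is bounded with bounded inverse; moreover the Gauss--Bonnet operators $D_\gg=\Id+\delta_\gg$ and $D_{\tilde g}=\Id+\delta_{\tilde g}$ coincide as differential operators on $M\setminus K$ (only the codifferentials differ, and only where the metrics do). The assumed boundedness of $\mathrm{Sec}_g$, together with that of $\cf$ and $|\Id\cf|_g$, provides the bounded-geometry control at infinity needed to place us in the setting of Theorem 4.1 of \cite{bunke}, which I would apply to the Gauss--Bonnet operator to obtain existence and completeness of the wave operators for the pair $(D_\gg,D_{\tilde g})$. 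Passing to the squares $H_\gg=D_\gg^2$ and $H_{\tilde g}=D_{\tilde g}^2$ via the invariance principle for the map $t\mapsto t^2$, I would conclude that $W_\pm(H_{\tilde g},H_\gg,I_{\gg,\tilde g})$ exist, are complete, and are partial isometries between $\mathrm{Im}\,P_{\mathrm{ac}}(H_\gg)$ and $\mathrm{Im}\,P_{\mathrm{ac}}(H_{\tilde g})$.

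Finally I would assemble the two factors by the multiplication theorem for wave operators. The identification operators compose as $I_{\gg,\tilde g}\,I_{g,\gg}=I_{g,\tilde g}=I$, since all three are the identity $\omega\mapsto\omega$ on forms with only the ambient $\IL^2$-space changing; and the final space $\mathrm{Im}\,P_{\mathrm{ac}}(H_\gg)$ of the first partial isometry is exactly the initial space of the second. Hence the product
$$
W_\pm(H_{\tilde g},H_\gg,I_{\gg,\tilde g})\,W_\pm(H_\gg,H_g,I_{g,\gg})
$$
exists, equals $W_\pm(H_{\tilde g},H_g,I)$, is complete, and is a partial isometry with initial space $\mathrm{Im}\,P_{\mathrm{ac}}(H_g)$ and final space $\mathrm{Im}\,P_{\mathrm{ac}}(H_{\tilde g})$, which is the assertion.

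I expect the second step to be the main obstacle. One must check that coincidence of the metrics outside a compact set, together with the stated curvature bounds, genuinely fits the hypotheses of Bunke's theorem, and --- more delicately --- one must justify the transition from the Gauss--Bonnet operators to their squares. As $t\mapsto t^2$ is not globally monotone, the invariance principle has to be applied separately on the spectral subspaces $E_{[0,\infty)}(D)$ and $E_{(-\infty,0]}(D)$, where it is respectively increasing and decreasing; on the decreasing piece the roles of $W_+$ and $W_-$ are exchanged, so one genuinely needs the existence and completeness of \emph{both} Dirac wave operators $W_\pm(D_{\tilde g},D_\gg,I_{\gg,\tilde g})$ in order to recover those for $H_{\tilde g}$ and $H_\gg$. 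Once this is in place, the composition step is routine.
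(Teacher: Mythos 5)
Your factorization through $\gg$ is exactly the paper's proof: Theorem \ref{main} handles the pair $(g,\gg)$, Theorem 4.1 of \cite{bunke} handles $(\gg,\tilde g)$, and the chain rule for wave operators assembles $W_\pm(H_{\tilde g},H_g,I)$ with $I=I_{\gg,\tilde g}\circ I_{g,\gg}$. Two points differ in the execution. First, the paper invokes Bunke's theorem directly for the Hodge Laplacians: it is a statement about generalized Dirac operators agreeing off a compact set, applied here to the Gauss--Bonnet operator, and as used in the paper its conclusion is already the existence and completeness of $W_\pm(H_{\tilde g},H_\gg,I_{\gg,\tilde g})$; so the invariance-principle detour through $t\mapsto t^2$, which you flag as the main obstacle, does not arise. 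Second, and more substantively, your endgame --- deducing completeness and the partial-isometry property of the composite by composing two partial isometries with matching final/initial spaces --- requires Bunke's theorem to deliver not merely existence and completeness but also that $W_\pm(H_{\tilde g},H_\gg,I_{\gg,\tilde g})$ is a partial isometry with initial space $\mathrm{Im}\, P_{\mathrm{ac}}(H_\gg)$ and final space $\mathrm{Im}\, P_{\mathrm{ac}}(H_{\tilde g})$; note that in two-Hilbert-space scattering, completeness (a statement about $\mathrm{Ker}\, W_\pm$ and $\overline{\mathrm{Im}\, W_\pm}$) does \emph{not} by itself imply the partial-isometry property, since the identification operator is not unitary. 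The paper extracts strictly less from \cite{bunke}: completeness of the composite is obtained from Proposition XI.5(c) of \cite{reed}, i.e.\ from the existence of the reverse wave operators $W_\pm(H_g,H_{\tilde g},I^{-1})$, which again follow from the chain rule applied to the inverse identifications; and the partial-isometry statement is proved separately by checking, as in the proof of \eqref{hilbert}, that $(I^*I-1)E_{H_g}(S)$ is compact for every bounded interval $S$, and then citing Proposition 5(d) and Lemma 2 of \cite[chapter XI.3]{reed}. If Bunke's theorem indeed yields partial isometries with the stated spaces (plausible, as it rests on a Belopol'skii--Birman type argument), your version closes; since you cannot verify this blind, the safer route is the paper's, which only needs existence and completeness from the cited theorem.
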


\begin{proof}
From Theorem \ref{main} we know that $W_{\pm}(H_{\gg},H_g, I_{g,\gg})$ exist and are complete, and Theorem 4.1 of \cite{bunke} shows that $W_{\pm}(H_{\tilde g},H_\gg,  I_{\gg,\tilde g})$ exist and are complete.

The chain rule for wave operators
\[
W_{\pm}(H_{\tilde g},H_g, I_{\gg,\tilde g}\circ I_{g,\gg})=W_{\pm}(H_{\tilde g},H_\gg,  I_{\gg,\tilde g})W_{\pm}(H_{\gg},H_g, I_{g,\gg})
\]
implies that $W_{\pm}(H_{\tilde g},H_g, I)$, where by definition
$$
I:=I_{g,\tilde g}=I_{\gg,\tilde g}\circ I_{g,\gg}
$$
exists. Using Proposition XI.5(c) from \cite{reed}, we get that $W_{\pm}(H_{\tilde g},H_g,  I)$ is complete from the existence of
\[
W_{\pm}(H_g,H_{\tilde g}, I^{-1})=W_{\pm}(H_g,H_{\gg}, I^{-1}_{g,\gg})W_{\pm}(H_\gg, H_{\tilde g}, I^{-1}_{\gg,\tilde g}).
\]
As in the proof of \eqref{hilbert} we conclude that $(I^{*} I-1) E_{H_g}(S)$ is compact for any bounded interval $S$. Then using Proposition 5(d) and Lemma 2 from \cite[chapter XI.3]{reed}, we get the statement about the partial isometries.
\end{proof}

As a more sophisticated application, we show that the assumptions of our main result are weaker (in the conformal case) than those of the main result of \cite{MS}, where only functions, that is $0$-forms, are treated. To this end, we add the simple:

\begin{definition}\label{expbnd} We say that a continuous decreasing function $\beta: [0,\infty)\to (0,\infty)$, with $\beta<1$ in the complement of compact set, is \emph{exponentially bounded from below}, if there are $C_1>0$, $C_2\geq 0$ such that 
$$
\beta(r)\geq C_1\mathrm{e}^{-C_2 r}\text{ for all $r\geq 0$.}
$$ 
\end{definition}

Now we can prove:

\begin{proposition}\label{ms} Assume that $\cf:M\to \IR$ is a smooth bounded function, that $g$ is complete such that sectional curvatures are bounded $|\mathrm{Sec}_{g}|, |\mathrm{Sec}_{\gg}|\leq L$ for some $L>0$, and furthermore that there is a function $\beta$ which is exponentially bounded from below, and a point $x_0\in M$ such that with $\beta(x):=\beta(1+\Id_g(x,x_0))$ the following conditions are satisfied:
\begin{enumerate}[(i)]
\item For some constant $C>0$ one has 
\begin{equation}
{}^1|g-\gg|:=|g-\gg|_g+|\nabla_g-\nabla_\gg|_g \leq C\cdot \beta \label{normdiffbound}
\end{equation}
\item\label{zwo} There are constants $b\in (0,1)$ with $\beta^{b}\in\IL^1(M,g)$, and $C_1>0$ such that for all $x\in M$,
\begin{equation}
\widetilde{\inj}_g(x):=
\min\bigl\{  \tfrac{\pi}{12 \sqrt{L}} ,\:  \inj_g(x) \bigr\}
\ge C_1 \cdot \beta(x)^{\frac{1-b}{m+2}}.\label{lowinjbound}
\end{equation}
\end{enumerate}
Then the assumptions of Theorem \ref{main} are satisfied, i.e.~one has $|\Id\cf|_g\in \IL^{\infty}(M)$, and $g,\gg\in\IMM_{K,h}(M)$ for some pair $(K,h)$, in a way such that $\cf$ is a $h$-scattering perturbation of $g$.
\end{proposition}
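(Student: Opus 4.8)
The plan is to verify, one by one, the three hypotheses of Theorem \ref{main}: that $|\Id\cf|_g$ is bounded, that $g,\gg\in\IMM_{K,h}(M)$ for a suitable common pair $(K,h)$, and that $\Id_h(g,\cf)<\infty$. The organizing principle is that the first-order bound \eqref{normdiffbound} controls $\Id(g,\cf)$ pointwise by $\beta$, while the injectivity estimate \eqref{lowinjbound} forces both capped harmonic radii to be bounded below by a fixed power of $\beta$; choosing $h$ to be exactly that power makes $\Id_h(g,\cf)$ comparable to $\int_M\beta^b\,\mu_g$, which is finite by \eqref{zwo}.

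First I would read $|\Id\cf|_g$ off \eqref{normdiffbound}. By \eqref{levi} the difference of connections $\nabla_\gg-\nabla_g$ is the algebraic tensor $(X,Y)\mapsto\Id\cf(X)Y+\Id\cf(Y)X-(X,Y)_g\operatorname{grad}_g(\cf)$, and the linear map $\Id\cf\mapsto(\nabla_\gg-\nabla_g)$ is injective for $m\geq2$ (if the tensor vanishes, evaluating on an orthonormal pair $X\perp Y$ forces $\Id\cf(X)=0$). Hence $|\Id\cf|_g\leq C(m)\,|\nabla_g-\nabla_\gg|_g\leq C(m)\cdot{}^1|g-\gg|\leq C(m)C\beta$, which is bounded since $\beta$ is decreasing and hence $\leq\beta(1)$. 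For the zeroth-order contribution, $|g-\gg|_g=\sqrt m\,|1-\mathrm e^{2\cf}|$, and since $\cf$ is bounded the ratio $\sinh(2|\cf|)/|1-\mathrm e^{2\cf}|$ stays bounded, so $\sinh(2|\cf|)\leq C'|1-\mathrm e^{2\cf}|\leq C''\cdot{}^1|g-\gg|\leq C'''\beta$. Combining, I obtain the pointwise bound $\Id(g,\cf)=\max\{\sinh(2|\cf|),|\Id\cf|_g\}\leq C\beta$ on all of $M$; this is the only way the metric deviation will enter the final integral.

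Next I would establish $g,\gg\in\IMM_{K,h}(M)$. Completeness of $\gg$ follows from quasi-isometry, and the two-sided bound $|\mathrm{Sec}_g|,|\mathrm{Sec}_\gg|\leq L$ bounds the full curvature tensors, hence the curvature endomorphisms, by some $C(m)L$; thus $\Q_g,\Q_\gg\geq-K$ with $K:=C(m)L$. For the harmonic radius I invoke Proposition \ref{prop:inj}a): the capped injectivity radius $\widetilde{\inj}_g=\min\{\pi/(12\sqrt L),\inj_g\}$ is $1$-Lipschitz (a minimum of a constant and the $1$-Lipschitz function $\inj_g$) and satisfies $\inj_g\geq\widetilde{\inj}_g$, while $\mathrm{Ric}_g\geq-(m-1)L=:-\beta_0^{-2}$. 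Part a) then yields $\min\{r_g(\cdot,p,q),1\}\geq C\min\{1,\tfrac12\widetilde{\inj}_g,\beta_0\}$, and since $\widetilde{\inj}_g$ is bounded above by the constant $\pi/(12\sqrt L)$ this minimum is $\geq c_*\widetilde{\inj}_g\geq c_*C_1\,\beta^{\frac{1-b}{m+2}}$ by \eqref{lowinjbound}. The identical scheme applies to $\gg$ once an injectivity estimate for $\gg$ is available.

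This last point is the main obstacle: \eqref{lowinjbound} controls only $\widetilde{\inj}_g$, whereas I also need $\widetilde{\inj}_\gg\gtrsim\beta^{\frac{1-b}{m+2}}$. I would obtain it by a comparison-geometry argument transferring the injectivity bound across the quasi-isometry: the conjugate radius of $\gg$ is $\geq\pi/\sqrt L$ by the upper sectional bound, and the length of the shortest $\gg$-geodesic loop at a point is comparable, up to the factor $\mathrm e^{\pm\|\cf\|_\infty}$, to its $g$-counterpart, so that $\inj_\gg\geq c\min\{1,\inj_g\}$ and hence $\widetilde{\inj}_\gg\geq c'\widetilde{\inj}_g\geq c''\beta^{\frac{1-b}{m+2}}$ (that $\beta$ is built from $d_g$ rather than $d_\gg$ costs only a change of constants, again by quasi-isometry). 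Applying Proposition \ref{prop:inj}a) to $\gg$ then gives $\min\{r_\gg(\cdot,p,q),1\}\geq c_1'\beta^{\frac{1-b}{m+2}}$. Setting $h:=\tilde c\,\beta^{\frac{1-b}{m+2}}$ with $\tilde c>0$ small enough that $h\leq\min\{1,r_g\}$ and $h\leq\min\{1,r_\gg\}$, we get $g,\gg\in\IMM_{K,h}(M)$, and finally, using $\Id(g,\cf)\leq C\beta$ together with $h^{-(m+2)}=\tilde c^{-(m+2)}\beta^{-(1-b)}$,
\[
\Id_h(g,\cf)=\int_M\Id(g,\cf)(x)\,h(x)^{-(m+2)}\,\mu_g(\Id x)\leq C\tilde c^{-(m+2)}\int_M\beta(x)^b\,\mu_g(\Id x)<\infty
\]
by \eqref{zwo}, which verifies that $\cf$ is an $h$-scattering perturbation and completes the reduction to Theorem \ref{main}.
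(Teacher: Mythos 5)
Your overall strategy coincides with the paper's: bound $\Id(g,\cf)$ pointwise by $\beta$ via \eqref{normdiffbound} (your injectivity argument for the algebraic map $\Id\cf\mapsto\nabla_\gg-\nabla_g$ replaces the paper's explicit orthonormal-frame computation; both are fine), get the lower curvature bounds from $|\mathrm{Sec}|\le L$, produce a common lower bound $h$ for the capped harmonic radii via Proposition \ref{prop:inj}, and then integrate. The genuine difference is which half of Proposition \ref{prop:inj} you use. You feed part a) the $1$-Lipschitz function $\widetilde{\inj}_g$ (and its transfer to $\gg$), which yields $h\asymp\beta^{\frac{1-b}{m+2}}$ directly; then
$\Id_h(g,\cf)\lesssim\int_M\beta\cdot\beta^{-(1-b)}\,\mu_g(\Id x)=\int_M\beta^b\,\mu_g(\Id x)<\infty$
is immediate, and, worth noting, the hypothesis that $\beta$ be exponentially bounded from below is never used in your argument. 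The paper instead replaces $\beta^{\frac{1-b}{m+2}}$ by its exponential minorant $c_1\mathrm{e}^{-c_2\frac{1-b}{m+2}\Id_g(\cdot,x_0)}$ (this is exactly where \lq\lq{}exponentially bounded from below\rq\rq{} enters) and applies part b); this produces a smaller, exponential $h$, and the final integration then has to convert $h^{-(m+2)}$ back into a power of $\beta$ on the region where $h$ is exponential. Your choice of Lipschitz minorant keeps $h$ comparable to the quantity appearing in \eqref{lowinjbound}, avoids that back-and-forth conversion entirely, and is the cleaner route.

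The one step where your argument is not yet a proof is the transfer of the injectivity bound to $\gg$. The statement you need, $\inj_\gg\gtrsim\min\{1,\widetilde{\inj}_g\}$ for quasi-isometric metrics both with $|\mathrm{Sec}|\le L$, is true, but your justification (\lq\lq{}the length of the shortest $\gg$-geodesic loop is comparable, up to $\mathrm{e}^{\pm\|\cf\|_\infty}$, to its $g$-counterpart\rq\rq{}) does not stand as stated: being a geodesic loop is a metric-dependent condition, so the shortest $\gg$-geodesic loop cannot be compared to $g$-geodesic loops simply by rescaling lengths, and a short $\gg$-geodesic loop lying inside a ball on which $\exp_g$ is a diffeomorphism produces no contradiction, since geodesic loops may well be contractible. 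The rigorous argument goes through volumes of balls in the spirit of Cheeger--Gromov--Taylor (bounded curvature plus a short geodesic loop forces small volume, and volumes transfer under quasi-isometry); this is precisely Proposition D.1 of \cite{HPW}, which the paper simply cites at this point. Replace your loop-comparison sketch by that citation, note that the resulting minorant $c\,\widetilde{\inj}_g$ of $\inj_\gg$ is Lipschitz with respect to $\gg$ with constant $c\,\mathrm{e}^{\|\cf\|_\infty}$ (so that part a) of Proposition \ref{prop:inj} indeed applies to $\gg$ as well), and your proof is complete.
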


\begin{proof} Let us first check that $g,\gg\in \IMM_{K,h}$ for some $K>0$ and an appropriate $h:M\to (0,\infty)$.\\
Clearly, by the curvature assumption, both curvature endomorphisms are bounded from below by a constant.
To construct $h$, choose $0<\eta\le 1$ with
\[
\eta g \le \gg \le \eta^{-1} g,
\]
then  Proposition D.1 from \cite{HPW} together with \eqref{lowinjbound} implies that one has
\[
\inj_\gg(x) \geq \min\Bigl\{ \tfrac{\eta^2 \pi}{12\sqrt{L}} \: ,\:
C_1\tfrac{\eta}{2}\beta(x)^{\frac{1-b}{m+2}}   \Bigr\}=:\tilde h(x).
\]
Thus we can use Proposition \ref{prop:inj} to  conclude that for all $p,q$ one has
\begin{equation}\label{hbound1}
\min\{1,r_g(x,p,q),r_\gg(x,p,q)\}\geq h(x):=\min\bigl\{ c_1 \mathrm{e}^{-c_2 \frac{1-b}{m+2} \Id(x,x_0)} ,\:c_3\bigr\}
\end{equation}
with $c_1,c_3$ depending on $\eta,L, p, q$. Finally it remains to show that $\Id_{h}(g,\cf)<\infty$. 
To that end we will first show
\begin{equation}
\Id_{h}(g,\cf)(x)\leq C_3 \cdot {}^1|g-\gg|(x)\le \tilde C\beta(x).\label{einsnorm}
\end{equation}
Clearly $|g-\gg|_g=|\mathrm{e}^{2\cf}-1| $, so that 
\[
\sinh(2 |\cf(x)|)\le C_4 |g-\gg|(x).
\]
Furthermore
 recall from \eqref{levi} that for any smooth vector field $Y$ on $M$ one has
$$ 
(\nabla_\gg-\nabla_g)(Y)=\Id\cf(\cdot)\otimes Y+\Id\cf(Y)(\cdot) -g(\cdot,Y) \text{grad}_g(\cf) \quad 
$$
Let $\{X_i\}$ be a smooth local orthonormal frame of vector fields w.r.t. $g$. Then
\begin{align*}
|\Id\cf|_g^2&=\sum_\ell |X_\ell(\cf)|^2, \\
|\nabla_\gg-\nabla_g|_g^2&=\sum_{j,k=1}^m |(\nabla_\gg-\nabla_g)(X_j,X_k)|^2 \\
&=\sum_{j,k}^m | X_j(\cf) X_k +X_k(\cf) X_j-\delta_{jk} \sum_\ell X_\ell(\cf) X_\ell|^2\\
&=\sum_{j<k} 2 | X_j(\cf) X_k +X_k(\cf) X_j|^2 + \sum_\ell |X_\ell(\cf)|^2,\\
\text{i.e.} \qquad\qquad
|\Id\cf|_g &\le |\nabla_\gg-\nabla_g|_g.
\end{align*}
Together with \eqref{hbound1} this shows \eqref{einsnorm} (and also that $|\Id\cf|_g$ is bounded).\\
Now the proof of $\Id_{h}(g,\cf)<\infty$ is almost the same as in  Remark 3.9 from \cite{HPW}; for the convenience of the reader we repeat the short argument. \\
We decompose $M=M_1\sqcup M_2$, where $M_1=\{x\in M \mid h(x)=c_3\}$ with $c_3$ from \eqref{hbound1}, and $M_2=M\setminus M_1$.
On $M_2$ we know 
\[
h^{-(m+2)}(x)=c_5 \mathrm{e}^{c_2 \frac{1-b}{m+2} \Id_g(x,q)}\le c_6 \beta(x)^{- \frac{1-b}{m+2}},\quad x\in M_2
\]
and from \eqref{einsnorm}, \eqref{normdiffbound}
\begin{align*}
\Id_{h}(g,\cf) 
&=\int_M \Id(g,\cf)(x)h(x)^{-(m+2)} \; \mu_{g}(\Id x) \\
&\le C \int_{M_2} \beta(x)^{b } \; \mu_{g}(\Id x)
+\hat C \int_{M_1} \beta(x) \; \mu_{g}(\Id x) <\infty
\end{align*}
where we have used \eqref{zwo} and that $\beta<1$ outside a compact set.
\end{proof}

\begin{remark} Note that although it deals with differential forms, the assumptions of Proposition \ref{ms} are still weaker than the ones from the main result Theorem 0.1 from \cite{MS} which only deals with functions (which however treats not necessarily conformal perturbations!) Firstly, and this is the main point of our results, we only have to assume a \emph{first order condition} on the deviations of the metrics, whereas their assumption $g\sim^2_{\beta} \gg$ (cf. Definition 1.9 from \cite{MS}) is a \emph{second order} one of the form 
$$
{}^2|g-\gg|(x):=|g-\gg|_g(x)+\sum^1_{i=0} |\nabla^i_g (\nabla_g-\nabla_\gg)|_g (x)\leq C\cdot\beta(x).
$$
Secondly, we can allow a larger class of \lq\lq{}control functions\rq\rq{} $\beta$. Indeed, Theorem 0.1 from \cite{MS} requires the function $\beta$ to be of \lq\lq{}a moderate decay\rq\rq{} (cf. Definition 1.4 in \cite{MS}), which is a stronger assumption than ours on $\beta$. 
\end{remark}

\subsection{Examples}

Finally, let us come to some explicit examples which satisfy the assumptions of the above results. Firstly, general manifolds with bounded geometry can be treated as follows:

\begin{corollary}\label{ffgg} Let $g$ be such that $|\mathrm{Sec}_g|$ is bounded and that $g$ has a positive injecitivity radius (in particular, $g$ is complete). Assume that $\psi:M\to \IR$ is smooth with $\max\{\psi,|\Id\psi|_g,|\mathrm{Hess}_g(\psi)|_g\}$ bounded, and
$$
\int_M  \max\{\sinh(2|\psi(x)|),|\Id\psi(x)|_g\}\mu_g(\Id x) <\infty.
$$
Then the wave operators $W_{\pm}(H_{\gg},H_g, I)$ exist and are complete. 
\end{corollary}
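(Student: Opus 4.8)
The plan is to verify that the hypotheses of Corollary \ref{ffgg} imply all the assumptions of Theorem \ref{main}, so that the existence and completeness of the wave operators follows immediately. The key observation is that the integrability condition stated here is precisely the requirement that $\psi$ be an $h$-scattering perturbation of $g$ for the \emph{constant} function $h\equiv c$ for a suitable $c\in(0,1]$; the hypothesis $\int_M \max\{\sinh(2|\psi|),|\Id\psi|_g\}\,\mu_g(\Id x)<\infty$ is exactly $\int_M \Id(g,\psi)(x)\,\mu_g(\Id x)<\infty$, and multiplying the integrand by the constant $h^{-(m+2)}=c^{-(m+2)}$ does not affect finiteness. So the heart of the matter reduces to producing a single pair $(K,h)$ with $h$ bounded below by a positive constant such that both $g$ and $\gg\in\IMM_{K,h}(M)$.

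\textbf{First I would} establish $g\in\IMM_{K,h}(M)$ for a constant $h$. By assumption $g$ has bounded sectional curvature, hence $\Q_g$ is bounded, in particular $\Q_g\geq -K$ for some $K>0$; and $g$ has positive injectivity radius, say $\inj_g\geq\iota_0>0$. Applying Proposition \ref{prop:inj}\emph{a)} with the constant Lipschitz function $\tilde h\equiv\iota_0$ (so $\|\Id\tilde h\|_{\infty,g}=0$) and with $\beta$ chosen from the lower bound $\mathrm{Ric}_g\geq -(m-1)K$ yields $\min\{r_g(\cdot,p,q),1\}\geq c'>0$ for a constant $c'$ depending only on $m,p,q,\iota_0$ and the curvature bound. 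Thus $\min\{1,r_g(\cdot,p,q)\}\geq h:=\min\{c',1\}>0$.

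\textbf{Next I would} handle $\gg=\mathrm{e}^{2\psi}g$. Since $\psi$ is bounded, $g$ and $\gg$ are quasi-isometric, and I can fix $0<\eta\leq 1$ with $\eta g\leq\gg\leq\eta^{-1}g$. The curvature transformation formula \eqref{hessil}, together with the boundedness of $\psi$, $|\Id\psi|_g$, and $|\mathrm{Hess}_g(\psi)|_g$, shows that $\Q_\gg$ is bounded from below, so after enlarging $K$ we retain $\Q_\gg\geq -K$. For the harmonic radius of $\gg$ I would bound $\inj_\gg$ from below using a comparison estimate (as in Proposition D.1 of \cite{HPW}) from the uniform bounds $|\mathrm{Sec}_g|\leq L$, the quasi-isometry constant $\eta$, and $\inj_g\geq\iota_0$; this gives $\inj_\gg\geq\iota_1>0$ for a constant $\iota_1$. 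Applying Proposition \ref{prop:inj}\emph{a)} once more with the constant $\tilde h\equiv\iota_1$ gives $\min\{1,r_\gg(\cdot,p,q)\}$ bounded below by a positive constant as well. Shrinking $h$ to the minimum of the two constants, I obtain $g,\gg\in\IMM_{K,h}(M)$ with $h$ a positive constant.

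\textbf{The main obstacle} I expect is the curvature boundedness of $\gg$ needed to control $\inj_\gg$ and to place $\gg$ in $\IMM_{K,h}(M)$: the formula \eqref{hessil} shows that $\Q_\gg$ (and hence $\mathrm{Sec}_\gg$) involves $\mathrm{Hess}_g(\psi)$, which is exactly why the hypothesis of Corollary \ref{ffgg} demands $|\mathrm{Hess}_g(\psi)|_g$ bounded even though this quantity plays no role in the scattering integral \eqref{inrt}. Once this uniform two-sided curvature and injectivity-radius control on $\gg$ is in hand, the remaining verification is routine: with the constant lower bound $h$ the scattering integral $\Id_h(g,\psi)=h^{-(m+2)}\int_M\Id(g,\psi)\,\mu_g(\Id x)$ is finite by hypothesis, so all assumptions of Theorem \ref{main} hold and the conclusion follows directly.
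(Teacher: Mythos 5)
Your proposal is correct and follows essentially the same route as the paper's own proof: reduce to Theorem \ref{main} by showing $g,\gg\in\IMM_{K,h}(M)$ for \emph{constant} $K,h>0$, use the conformal curvature formula \eqref{hessil} together with the boundedness of $\psi$, $|\Id\psi|_g$, $|\mathrm{Hess}_g(\psi)|_g$ to bound $|\mathrm{Sec}_{\gg}|$, and invoke Proposition D.1 of \cite{HPW} to get $\inj_{\gg}>0$ from quasi-isometry and the two-sided curvature bounds. The only difference is that you spell out, via Proposition \ref{prop:inj}\emph{a)}, the passage from bounded curvature and positive injectivity radius to a constant lower bound on the harmonic radius, a step the paper's proof treats as clear.
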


\begin{proof} In view of Theorem \ref{main}, it is sufficient to prove that $g,\gg\in \IMM_{K,h}$ for some \emph{constants} $K>0,h>0$. Clearly this is the case for $g$. On the other hand, $g_{\psi}$ is complete, and as $|\mathrm{Hess}_g(\psi)|_g$ and $|\Id\psi|_g$ are bounded it follows from the perturbation formula (\ref{hessil}) that $|\mathrm{Sec}_{g_{\psi}}|$ is bounded. Now as both metrics are complete and quasi-isometric and both with bounded curvature,  $ \mathrm{inj}_g(M)>0$ automatically implies  $\mathrm{inj}_{g_{\psi}}(M)>0$ (see e.g. Proposition D.1 from \cite{HPW}), which completes the proof of $g,\gg\in \IMM_{K,h}$ for some constants $K>0,h>0$. 
\end{proof}

More specifically, Corollary \ref{ffgg} can be brought into the following convenient form in the particularly important case where the \lq\lq{}known\rq\rq{} Riemannian manifold $(M,g)$ has a {\em warped product structure}:

To this end let $M$ be a smooth connected manifold (without boundary) $\dim(M)=n+1$, let $U\subset M$ be a smooth compact submanifold with boundary and $\dim(N)=n$. Let us label by $N:=\partial U$ the boundary of $U$ and by $U'$ the interior of $U$. Assume that there exists a smooth diffeomorphism: $F:M\setminus U'\rightarrow [1,\infty)\times N$. Finally consider a smooth metric $g$ on $M$ such that $(F^{-1})^*(g|_{M\setminus U'})=h^2\Id r^2+f^2g_N$ where $f:[1,\infty)\rightarrow [0,\infty)$ and $h:[1,\infty)\rightarrow [0,\infty)$ are smooth and $g_N$ is a smooth metric on $N$.  

\begin{proposition}
\label{condwar}
Let $M$, $N$, $U$, $U'$, g and $F:M\setminus U'\rightarrow [1,\infty)\times N$ be as above. Then the following assertions hold: 
\begin{itemize}
\item If $\inf h>0$, then $g$ is complete.
\item If $\inf\min\{h,f\}>0$, then there exists $\epsilon>0$ such that 
$$
\inf_{x\in M } \mu_g(B_g(x,\epsilon))>0.
$$
\item Assume that $h=1$ and that $(\log(f))''$, $ (\log(f)')^2$ and $1/f^{2}$ are bounded functions on $[1,\infty)$. Then $(M,g)$ is complete with bounded sectional curvatures  and positive injectivity radius.
\end{itemize}
\end{proposition}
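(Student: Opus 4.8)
The three assertions concern only the end $M\setminus U'\cong[1,\infty)\times N$, on which $g=h^2\Id r^2+f^2 g_N$; on the compact piece $U$ every statement is immediate by compactness, so throughout I would fix a compact $K\supset U$ of the form $U\cup\{r\le 2\}$ and argue uniformly for large values of the radial coordinate $r$. I set $\rho(r):=\int_1^r h(s)\,\Id s$.

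\emph{First assertion.} I would invoke Hopf--Rinow in the form: if every closed metric ball is compact, then $g$ is complete. For any curve $\gamma$ one has $|\dot\gamma|_g^2\ge h^2\,((r\circ\gamma)')^2$, so the length of $\gamma$ dominates the total variation of $\rho\circ r$ along the part of $\gamma$ lying in the end; hence a curve joining a fixed basepoint $x_0\in K$ to a point $x$ with $r(x)$ large satisfies $\Id_g(x_0,x)\ge\rho(r(x))-\rho(2)$. Since $\inf h>0$ forces $\rho(r)\to\infty$, the radial coordinate is bounded on any bounded set, so a closed ball $\overline{B_g(x_0,R)}$ is a closed subset of a compact set $U\cup\{r\le r_*\}$, hence compact.

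\emph{Second assertion.} This is the step I expect to be the real obstacle, because only lower bounds on $h,f$ are available and $f$ may be unbounded, so no quasi-isometry comparison is possible. The idea is to place, inside each ball, an explicit ``cylinder'' of uniformly controlled volume by exploiting a scale cancellation. Let $\min\{h,f\}\ge c>0$, fix a small $\epsilon>0$ and $p=(r_0,y_0)$, and pick $r_1$ with $\int_{r_0}^{r_1}h=\epsilon/3$ (possible since $h\ge c$). Concatenating a radial segment with a fibre geodesic shows
\[
B_g(p,\epsilon)\supseteq A:=\Bigl\{(r,y):r_0\le r\le r_1,\ \Id_{g_N}(y,y_0)<\tfrac{\epsilon}{3f(r)}\Bigr\}.
\]
Since $\mu_g=h f^n\,\Id r\,\mu_{g_N}$, the fibre integral gives
\[
\mu_g(A)=\int_{r_0}^{r_1}h(r)\,f(r)^n\,\mathrm{vol}_{g_N}\!\bigl(B_{g_N}(y_0,\tfrac{\epsilon}{3f(r)})\bigr)\,\Id r ,
\]
and the crucial point is the scale identity
\[
f^n\,\mathrm{vol}_{g_N}\!\bigl(B_{g_N}(y_0,\tfrac{\epsilon}{3f})\bigr)=\mathrm{vol}_{f^2g_N}\!\bigl(B_{f^2g_N}(y_0,\tfrac{\epsilon}{3})\bigr)\ge v(\epsilon/3)>0 ,
\]
with the bound uniform in $f\ge c$ and $y_0$: for $\tfrac{\epsilon}{3f}\le\inj_{g_N}$ it follows from the uniform estimate $\mathrm{vol}_{g_N}(B_{g_N}(\cdot,s))\ge C_N s^n$ on the compact manifold $N$, and otherwise from $f\ge c$ together with $\mathrm{vol}_{g_N}(B_{g_N}(\cdot,\inj_{g_N}))\ge C_N\inj_{g_N}^{\,n}$. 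Hence $\mu_g(B_g(p,\epsilon))\ge v(\epsilon/3)\int_{r_0}^{r_1}h=v(\epsilon/3)\,\epsilon/3$, a uniform positive bound, which combined with compactness of $U$ yields $\inf_x\mu_g(B_g(x,\epsilon))>0$.

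\emph{Third assertion.} Completeness is immediate from the first assertion, since $h\equiv1$. For bounded curvature I would use the O'Neill formulas for the warped product $\Id r^2+f^2 g_N$ (see e.g.\ \cite{besse}): in the $g$-orthonormal frame $\{\partial_r,f^{-1}E_i\}$, with $\{E_i\}$ a $g_N$-orthonormal frame, the only nonvanishing curvature components are of radial--vertical type, equal to $-f''/f$, and of purely vertical type, built from $f^{-2}R_N$ and $(f'/f)^2$. Writing $f''/f=(\log f)''+((\log f)')^2$ and $(f'/f)^2=((\log f)')^2$, each such component is bounded by hypothesis, using in addition that $|R_N|$ is bounded (as $N$ is compact) and that $1/f^2$ is bounded; together with compactness of $U$ this gives $\mathrm{Sec}_g$ bounded. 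Finally, boundedness of $1/f^2$ gives $\inf f>0$, so $\inf\min\{h,f\}>0$ and the second assertion provides a uniform lower volume bound for $\epsilon$-balls; a complete manifold with bounded sectional curvature and such a non-collapsing bound has positive injectivity radius by the Cheeger--Gromov--Taylor theorem. Note that the quasi-isometry comparison used in Corollary \ref{ffgg} is unavailable here precisely because $f$ need not be bounded above, which is why the genuine non-collapsing input of the second assertion is needed.
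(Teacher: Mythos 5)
Your proof is correct. For the third assertion it follows the paper's route exactly: warped-product curvature formulas (the paper cites \cite{pli}, you cite the O'Neill formulas from \cite{besse}, which give the same components $-f''/f$ and $f^{-2}K_N-(f'/f)^2$) together with the Cheeger--Gromov--Taylor theorem \cite{CGT}, fed by completeness and the non-collapsing bound from the second assertion. The first two assertions are where you diverge. For completeness, the paper does not estimate curve lengths at all: it extends the radial coordinate to a smooth proper function $\tilde{x}$ on $M$, observes that its gradient is bounded (of size $1/h$), and invokes Gordon's criterion \cite{gordon1,gordon2}, which characterizes completeness by the existence of such a function; your Hopf--Rinow argument via $\Id_g(x_0,x)\geq \rho(r(x))-\rho(2)$ is the elementary, self-contained version of the same mechanism and avoids that citation. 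For the second assertion, the paper merely remarks that the claim \lq\lq{}follows immediately\rq\rq{} from compactness of $U$ and the product form $\mu_{g'}=h(r)f^n(r)\Id r\,\mu_{g_N}$, leaving the actual non-collapsing estimate implicit; your cylinder construction with the scale cancellation
$$
f^n\,\mathrm{vol}_{g_N}\bigl(B_{g_N}(y_0,\tfrac{\epsilon}{3f})\bigr)=\mathrm{vol}_{f^2g_N}\bigl(B_{f^2g_N}(y_0,\tfrac{\epsilon}{3})\bigr)
$$
supplies precisely the argument the paper omits, and it is the right one: since $f$ may be unbounded above, a $g$-ball of fixed radius need not contain any product set of fixed $g_N$-radius, so the $1/f$ shrinking of the fibre radius compensated by the $f^n$ in the volume density is genuinely needed. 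In short, your write-up is a correct proof that is more detailed than the paper's at its tersest point and more elementary at another.
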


\begin{proof}
Let $\tilde{x}:M\rightarrow (0,\infty)$ be a smooth function such that $\tilde{x}|_{M\setminus U'}=x\circ F$. Then $\tilde{x}$ is a proper function with bounded gradient and the completeness statement follows using Gordon's completeness criterion \cite{gordon1,gordon2} (which states that completeness is equivalent to the existence of a smooth proper function with bounded gradient). The second statement follows immediately observing that $U$ is compact and that on $[1,\infty)\times N$ (which through $F$ is isometric to $M\setminus U'$) we have 
$$
\mu_{g'}(\Id p,\Id r)=h(r)f^n(r)\Id r \mu_{g_N}(\Id p)\>\text{ where $g':=(F^{-1})^*(g|_{M\setminus U'})$}.
$$
 Finally we deal with the third statement. According to the formulas for the curvature of warped product metrics proved in \cite{pli} page 393, the assumptions on $f$ guarantee that the sectional curvatures of $g$ are bounded. This property, together with the second property, implies that $\mathrm{inj}_g(M)>0$ which follows from a classical result of Cheeger-Gromov-Taylor \cite{CGT}, page 47. The proof is thus completed.
\end{proof}

We point out that $f(r):=r^{b}$, with $b\geq 0$, satisfies all the assumptions of the last statement of Proposition \ref{condwar}.\\
 Here the case $b=0$ (with $h=1$) corresponds to Riemannian manifolds \emph{with cylindrical ends} (these kind of metrics belong to the family of Melrose's $\mathbf{b}$-metrics which are intensively studied in \cite{cyl}). \\
The case $b=1$ (with $h=1$) corresponds to so called Riemannian manifolds with \emph{conical ends}. This kind of metric is  a particular case of a more general class of metrics called \emph{scattering metrics} which are intensively studied in \cite{con}.  \\
Another function that satisfies all the assumptions of the last statement of Proposition \ref{condwar} is given by  $f(r)=\mathrm{e}^{r}$. This kind of metric (with $h=1$) is isometric to a conformally compact metric defined on the interior of $U$ and the latter kind of metrics are studied for instance in \cite{com}.

\begin{corollary}
\label{warconf} Assume in the above warped product situation  that $|\mathrm{Sec}_g|$ is bounded, that $g$ has a positive injectivity radius  and that $\beta:[1,\infty)\to (0,\infty)$ is a bounded  Borel function with $\beta\in \IL^1([1,\infty),h(r)f^{n}(r)\Id r)$. Then for any bounded smooth function $\psi:M\to\IR$ with bounded $g$-Hessian and 
$$
\max\{ \sinh(|2\psi|),|\Id\psi|_g\}|_{ F^{-1} (r,q)}\leq  \beta(r)\>\text{ for all $(r,q)\in [1,\infty)\times N$,}
$$
the wave operators $W_{\pm}(H_{\gg},H_g, I)$ exist and are complete.
\end{corollary}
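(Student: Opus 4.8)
The plan is to derive the statement from Corollary \ref{ffgg}, whose conclusion is exactly the claimed existence and completeness of $W_{\pm}(H_{\gg},H_g,I)$; thus it suffices to verify the hypotheses of that corollary. The conditions that $|\mathrm{Sec}_g|$ be bounded and that $\inj_g(M)>0$ are assumed verbatim here, so the only remaining work is to control the conformal factor $\psi$: one must show that $\max\{\psi,|\Id\psi|_g,|\mathrm{Hess}_g(\psi)|_g\}$ is bounded, and that $\max\{\sinh(2|\psi|),|\Id\psi|_g\}$ is $\mu_g$-integrable over $M$.

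Boundedness of $\psi$ and of $|\mathrm{Hess}_g(\psi)|_g$ is part of the present hypotheses, so only $|\Id\psi|_g\in\IL^\infty(M)$ requires an argument. On the end $M\setminus U'$, which $F$ identifies with $[1,\infty)\times N$, the pointwise assumption gives $|\Id\psi|_g\le\max\{\sinh(2|\psi|),|\Id\psi|_g\}\le\beta(r)\le\sup_{r\ge 1}\beta(r)<\infty$ since $\beta$ is bounded; on the compact set $U$ the continuous function $|\Id\psi|_g$ is automatically bounded. Hence $|\Id\psi|_g$ is bounded on all of $M$, and all pointwise hypotheses of Corollary \ref{ffgg} are met.

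For the integrability I would decompose $M=U\cup(M\setminus U')$ along $N=\partial U$, a set of $\mu_g$-measure zero. Over the compact set $U$ the integrand is continuous, hence integrable. Over $M\setminus U'$ I would change variables along $F$ and invoke the volume factorization from the proof of Proposition \ref{condwar}, namely that $F$ carries $\mu_g|_{M\setminus U'}$ to $h(r)f^n(r)\,\Id r\,\mu_{g_N}$ on $[1,\infty)\times N$; together with the pointwise bound $\le\beta(r)$ and Fubini this gives
\[
\int_{M\setminus U'}\max\{\sinh(2|\psi|),|\Id\psi|_g\}\,\mu_g\le \vol_{g_N}(N)\int_1^\infty\beta(r)\,h(r)f^n(r)\,\Id r,
\]
which is finite because $N$ is compact, so $\vol_{g_N}(N)<\infty$, and $\beta\in\IL^1([1,\infty),h(r)f^n(r)\,\Id r)$ by hypothesis. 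Adding the two contributions yields the required global integral bound, and Corollary \ref{ffgg} applies.

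I expect no genuine analytic obstacle here: the substantive content is already packaged in Corollary \ref{ffgg} and in the volume computation of Proposition \ref{condwar}. The only points needing care are bookkeeping ones — ensuring that $|\Id\psi|_g$ (and not merely $\sinh(2|\psi|)$) is globally bounded, so that the full pointwise hypothesis of Corollary \ref{ffgg} holds, and separating off the compact piece $U$ before applying the warped-product volume factorization, which is valid only on $M\setminus U'$.
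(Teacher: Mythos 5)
Your proposal is correct and follows essentially the same route as the paper's own proof: reduce to Corollary \ref{ffgg}, note that $|\Id\psi|_g$ is bounded because $\beta$ is bounded (and $U$ is compact), and establish the integrability by splitting off the compact piece $U$ and using the warped-product volume form $h(r)f^n(r)\,\Id r\,\mu_{g_N}$ on $M\setminus U'$ together with the pointwise bound by $\beta(r)$ and compactness of $N$. The only difference is presentational; the paper states these steps more tersely, while you spell out the bookkeeping (Fubini, the measure-zero boundary $N$, and the distinction between the two terms in the maximum) explicitly.
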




\begin{proof} We are going to use Corollary \ref{ffgg}. Note first that $|\Id\psi|_g$ is bounded, as $\beta$ is so. By Corollary \ref{ffgg} and the compactness of $U$ we only have to check that 
$$
\int_{M\setminus U\rq{}}\max\{\sinh(2|\psi(x)|),|\Id\psi(x)|_g\}\mu_g(\Id x)<\infty,
$$ 
but clearly
\begin{align*}
&\int_{M\setminus U\rq{}}\max\{\sinh(2|\psi(x)|),|\Id\psi(x)|_g\}\mu_g(\Id x)\\
&\leq \int_N\int^{\infty}_{1}\beta(r) h(r)f^n(r)\Id r\Id \mu_{g_N}( q)\\
&\leq \mu_{g_N}(N)\int^{\infty}_{1}\beta(r) h(r)f^n(r)\Id r<\infty,
\end{align*}
so that all assumptions of Corollary \ref{ffgg} are satisfied by $(g,\psi)$.
\end{proof}

Let us specify the condition on the \lq\lq{}control function\rq\rq{} $\beta$ from Corollary \ref{warconf} for the above mentioned three special cases of warped products: \\
If $g$ has a cylindrical end, then we simply have to require that $\beta\in \IL^1([1,\infty),\Id r)$.\\
If $g$ has a conical end, then our  condition on $\beta$ becomes $\beta\in \IL^1([1,\infty),r^n\Id r)$.\\ 
Finally, in case $h=1$ and $f(r)=\mathrm{e}^{r}$, our condition on $\beta$ becomes $\beta\in \IL^1([1,\infty),\mathrm{e}^{nr}\Id r)$.\vspace{2mm}

We can now formulate a more sophisticated application of our main result, which is motivated by observing that any two sufficiently well-behaved warped product metrics are \lq\lq{}essentially conformally equivalent\rq\rq{} (see the proof of Proposition \ref{warpconfper} below for precise statements).

\begin{proposition}
\label{warpconfper}
Let $M$, $N$, $U$, $U'$ and $F:M\setminus U'\rightarrow [1,\infty)\times N$ be as in Proposition \ref{condwar}. Let $g$ and $\tilde{g}$ be two warped product metrics such that 
\begin{itemize}
\item one has
$$
(F^{-1})^*(g|_{M\setminus U'})=\Id r^2+r^{2b}g_N\>\text{ for some $b$ with $0\leq b \leq 1$} 
$$
\item one has
$$
(F^{-1})^*(\tilde{g}|_{M\setminus U'})=\Id r^2+f^2g_N\>\text{ with $f:[1,\infty) \rightarrow [1,\infty)$ smooth}
$$
\item the function $\max\{f^{-2}, (\log(f))'',(\log(f)')^2\}$ is bounded
\item with $\phi=\phi_{f,b}:[1,\infty)\to [0,\infty)$ defined by
$$
\phi(r):=\begin{cases}
&(1-b)^{\frac{1}{1-b}}\left(\int_1^r\frac{1}{f(s)} \Id s\right)^{\frac{1}{1-b}},\>\text{ if $0\leq b<1$}\\
&\mathrm{e}^{\int_1^r\frac{1}{f(s)}\Id s},\>\text{ if $b=+1$}
\end{cases}
$$
assume that 
\begin{equation}
\label{hessian}
\left|\frac{\phi'''\phi'-(\phi'')^2}{(\phi')^2}\right|\>\>\text{ is bounded}
\end{equation}
 and that 
\begin{equation}
\label{secondbound}
\max\left\{ \sinh\Big(|\log\big((\phi')^2\big)|\Big),\left|\frac{\phi''}{\phi'}\right|\right\}\leq  \beta
\end{equation}
for some bounded $\beta\in \IL^1([1,\infty), f^{n}\Id r)$.
\end{itemize}
Then the wave operators $W_{\pm}(H_g,H_{\tilde{g}}, I)$ exist and are complete.
\end{proposition}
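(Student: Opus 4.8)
The plan is to show that, after an explicit reparametrisation of the radial variable, the metric $g$ becomes conformally equivalent to $\tilde g$ near infinity, thereby reducing the statement to the conformal scattering result Corollary \ref{cor:bunke}, combined with the unitary implemented by the reparametrising diffeomorphism and the chain rule for wave operators.

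First I would carry out the reparametrisation. Seeking a fibre-preserving diffeomorphism $\Phi$ of $M$ which near infinity has the form $(r,q)\mapsto(\phi(r),q)$ in the coordinates furnished by $F$, one computes, writing $g=\Id r^2+r^{2b}g_N$ in its own radial coordinate, that
\[
\Phi^* g=(\phi')^2\,\Id r^2+\phi^{2b}g_N \quad\text{on } M\setminus U'.
\]
Requiring this to be a conformal multiple of $\tilde g=\Id r^2+f^2g_N$ forces $\phi^{2b}=(\phi')^2 f^2$, i.e. $\phi'=\phi^{b}/f$; integrating this ODE with $\phi(1)=0$ (resp. $\phi(1)=1$ when $b=1$) yields precisely the function $\phi=\phi_{f,b}$ of the statement, and then $\Phi^* g=(\phi')^2\tilde g$. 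Setting $\hat g:=\Phi^* g$ we thus obtain $\tilde g=\mathrm{e}^{2\psi}\hat g$ on $M\setminus U'$ with conformal factor $\psi:=-\log\phi'$. The role of the conditions \eqref{hessian} and \eqref{secondbound} now becomes transparent: since $\tilde g$ is radially normalised one has $|\Id\psi|_{\tilde g}=|\psi'|=|\phi''/\phi'|$, while $\sinh(2|\psi|)=\sinh(|\log((\phi')^2)|)$ and the quantity in \eqref{hessian} equals $|\psi''|$. Hence \eqref{secondbound} bounds the zeroth- and first-order deviation $\Id(\tilde g,\psi)$ by $\beta$, and \eqref{hessian} supplies the bounded second derivative needed to control the Hessian of $\psi$ and thereby the curvature.

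Next I would verify the hypotheses of Corollary \ref{cor:bunke} for the pair $(\hat g,\tilde g)$. The metric $g$, hence its isometric copy $\hat g$, has bounded sectional curvature and positive injectivity radius by the third assertion of Proposition \ref{condwar} (applied with $h=1$, $f=r^{b}$); the same proposition applied to $\tilde g$ gives bounded geometry for $\tilde g$, using that $\max\{f^{-2},(\log f)'',((\log f)')^2\}$ is bounded. As in the proof of Corollary \ref{ffgg}, bounded geometry forces $\hat g,\tilde g\in\IMM_{L,h}(M)$ for some constant pair $(L,h)$, while the quasi-isometry $\tilde g\sim\hat g$ (from boundedness of $\psi$) makes $I$ bounded and shows $\psi,|\Id\psi|_{\hat g}$ bounded. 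Finally, since $h$ is constant and $\mu_{\tilde g}=f^{n}\,\Id r\,\mu_{g_N}$ on the end, the scattering integral reduces, up to the bounded conformal factors relating $\hat g$ and $\tilde g$, to
\[
\Id_h(\hat g,\psi)\le C\int_{M\setminus U'}\beta\,\mu_{\tilde g}(\Id x)+C'\le C\,\mu_{g_N}(N)\int_1^\infty\beta(r)f(r)^{n}\,\Id r+C'<\infty,
\]
which is finite by the assumption $\beta\in\IL^1([1,\infty),f^{n}\,\Id r)$. Thus Corollary \ref{cor:bunke} applies and yields existence and completeness of $W_\pm(H_{\tilde g},H_{\hat g},I_{\hat g,\tilde g})$.

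It remains to transport this back to $g$. Since $\Phi$ is an isometry $(M,\hat g)\to(M,g)$, the pullback $(\Phi^{-1})^*:\Omega_{\IL^2}(M,\hat g)\to\Omega_{\IL^2}(M,g)$ is unitary and intertwines $H_{\hat g}$ with $H_g$, so $W_\pm(H_g,H_{\hat g},(\Phi^{-1})^*)$ trivially exist and are complete. Combining this with the completeness just obtained (which, via Proposition XI.5(c) of \cite{reed} exactly as in the proof of Corollary \ref{cor:bunke}, furnishes $W_\pm(H_{\hat g},H_{\tilde g},I_{\tilde g,\hat g})$), the chain rule for wave operators gives existence and completeness of $W_\pm(H_g,H_{\tilde g},(\Phi^{-1})^*\circ I_{\tilde g,\hat g})$; as this composite identification differs from the canonical $I$ only by the spectrum-preserving unitary $\Phi^*$, the stated conclusion follows. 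The main obstacle I anticipate lies in the first step: realising $\phi$ as a genuine diffeomorphism of all of $M$ (in particular its properness and surjectivity onto the end, and a smooth interpolation across the compact core, where $\phi'(1)=0$ when $b<1$), together with the careful bookkeeping required to pass all the $\phi$-conditions through the quasi-isometry between the $\hat g$- and $\tilde g$-norms and to confirm the harmonic-radius lower bound entering $\IMM_{L,h}(M)$.
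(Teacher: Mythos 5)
Your proof is correct and is essentially the paper's own argument: the paper likewise uses the radial reparametrisation $\phi$ (solving $\phi'=\phi^{b}/f$) to produce an intermediate metric $g_{\chi}=\mathrm{e}^{2\chi}\tilde g$ with $\chi=\log\phi'$ at infinity — on the end the same metric as your $\hat g=\Phi^{*}g$ — then handles the conformal step with its conformal scattering corollary (Corollary \ref{warconf} there, Corollary \ref{cor:bunke} in your version), the isometric-at-infinity step with Theorem 4.1 of \cite{bunke}, and concludes by the chain rule exactly as at the end of the proof of Corollary \ref{cor:bunke}. The only differences are organisational: the paper keeps the isometry of the ends inside Bunke's theorem rather than promoting $\phi$ to a global diffeomorphism of $M$ (thereby sidestepping the extension-across-the-core issue you flag as your main obstacle), and it shares the same final looseness about the composite identification operator not literally being the canonical $I$.
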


\begin{proof} First of all we observe that, by Proposition \ref{condwar}, $\tilde{g}$ and $g$ are complete, with bounded sectional curvature and with positive injectivity radius. Let $\chi$ be a smooth and positive function on $M$ such that $\mathrm{e}^{2\chi}|_{M\setminus U'}$ coincides with $(\phi')^2\circ F$. Then, by \eqref{hessian} and \eqref{secondbound}, we have that also $g_{\chi}$ is complete with bounded sectional curvatures and positive injectivity radius and by Corollary \ref{warconf} we can conclude that  the wave operators $W_{\pm}(H_{g_{\chi}},H_g, I)$ exist and are complete. Now consider the metric $g$ and $g_{\chi}$. They are isometric on $M\setminus U'$ through an isometry that on $[1,\infty)\times N$ is given by $(s,p)=(\phi(r),p)$.  Therefore we can use Theorem 4.1 of \cite{bunke} to conclude  that $W_{\pm}(H_ g,H_{g_{\chi}},  I_{g, g_{\chi}})$ exist and are complete. Finally, using the same argument used at the end of the proof of Corollary \ref{cor:bunke}, we can finally conclude that the wave operators  $W_{\pm}(H_g,H_{\tilde{g}}, I)$ exist and are complete.
\end{proof}

Finally, we show how the latter results can be used to actually control the absolutely continuous spectrum of a certain metric from the knowledge of the absolutely continuous spectrum of another metric, by showing that the wave operators exist and are complete. Let $\sigma_{\mathrm{ac}}(T)$, resp. $\sigma_{\mathrm{ess}}(T)$, denote the absolutely continuous, resp. the essential spectrum of a self-adjoint operator $T$. We recall that $H^{(j)}_g$ denotes the Friedrichs realization of the Hodge-Laplacian given by the metric, acting on $j$-forms.

\begin{corollary}
\label{computations}
Let $g$ and $\tilde{g}$ be as in Proposition \ref{warpconfper}. \\
a) If $b=0$  then for every $j=0,...,n+1$ we have
\begin{equation}
\label{firstcomp}
\sigma_{\mathrm{ac}}\big(H^{(j)}_{\tilde{g}}\big) \subset \bigcup_{k\in\IN}\Big([\lambda_k^{(j)},\infty)\cup[\lambda_k^{(j-1)},\infty)\Big)
\end{equation}
with $(\lambda_k^{(j)})_{k\in\IN}$  (resp. $(\lambda_k^{(j-1)})_{k\in\IN}$) the eigenvalues of the Hodge-Laplacian  $H^{(j)}_{g_N}$ (resp. $H^{(j-1)}_{g_N}$) acting on $N$.\\ 
b) Assume now that $U'$ is diffeomorphic to the open Euclidean ball $B(0,1)\subset \mathbb{R}^{n+1}$. \\
If $b=0$ then for every $j=0,...,n+1$ we have
\begin{equation}
\label{firstcompII}
\sigma_{ac}\big(H^{(j)}_{\tilde{g}}\big) = [\overline{\lambda^{(j)}},\infty),
\end{equation}
  where $\overline{\lambda^{(j)}}:=\min\{\lambda_0^{(j)},\lambda^{(j-1)}_0\}$ is the minimum of the lowest eigenvalue $\lambda_0^{(j)}$ of $H^{(j)}_{g_{\mathbb{S}^n}}$ and the lowest eigenvalue $\lambda^{(j-1)}_0$ of  $H^{(j-1)}_{g_{\mathbb{S}^n}}$, with $g_{\mathbb{S}^n}$ the standard metric on the unit sphere $\mathbb{S}^n$.\\
Finally, if $0<b\leq1$ then for every $j=0,...,n+1$ we have 
\begin{equation}
\label{firstcompIII}
\sigma_{\mathrm{ess}}\big(H^{(j)}_{\tilde{g}}\big)=\sigma_{\mathrm{ac}}\big(H^{(j)}_{\tilde{g}}\big) = [0,\infty).
\end{equation}
\end{corollary}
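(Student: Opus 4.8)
The plan is to use the scattering result of Proposition \ref{warpconfper} to transport the absolutely continuous spectrum from the explicitly solvable model metric $g$ onto $\tilde g$, and then to read off the spectrum of $g$ itself from the Sturm--Liouville computations of Antoci \cite{antoci}. Thus the novel input is the reduction $\tilde g\rightsquigarrow g$, while the actual spectral values come from the known model.

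First I would record the conclusion of Proposition \ref{warpconfper}, that $W_{\pm}(H_g,H_{\tilde g}, I)$ exist and are complete. Since $I=\bigoplus_j I^{(j)}$ and $H_g=\bigoplus_j H^{(j)}_g$, $H_{\tilde g}=\bigoplus_j H^{(j)}_{\tilde g}$ all respect the decomposition by form degree, the wave operators split degreewise exactly as in Corollary \ref{main2}, and completeness yields a unitary equivalence $(H^{(j)}_{\tilde g})_{\mathrm{ac}}\sim (H^{(j)}_g)_{\mathrm{ac}}$ for every $j$. In particular
\[
\sigma_{\mathrm{ac}}\big(H^{(j)}_{\tilde g}\big)=\sigma_{\mathrm{ac}}\big(H^{(j)}_g\big)\qquad\text{for all }j,
\]
so all three assertions reduce to computing the absolutely continuous spectrum of the model metric $g$, whose end is $\Id r^2+r^{2b}g_N$.

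For $b=0$ the end is the Riemannian product $[1,\infty)\times N$. Writing a $j$-form there as $\omega=\alpha+\Id r\wedge\beta$ with $\alpha\in\Omega^j(N)$ and $\beta\in\Omega^{j-1}(N)$, the operator $\Delta^{(j)}_g$ diagonalizes on the end into a direct sum, indexed by the eigenforms of $H^{(j)}_{g_N}$ and of $H^{(j-1)}_{g_N}$, of half-line operators $-\partial_r^2+\lambda_k^{(j)}$ and $-\partial_r^2+\lambda_k^{(j-1)}$ on $\IL^2([1,\infty),\Id r)$. Each of these has absolutely continuous spectrum $[\lambda_k,\infty)$, independently of the self-adjoint boundary condition at $r=1$, so Antoci's analysis gives assertion (a), namely $\sigma_{\mathrm{ac}}(H^{(j)}_g)\subset\bigcup_k([\lambda_k^{(j)},\infty)\cup[\lambda_k^{(j-1)},\infty))$. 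Under the extra hypothesis of (b) that $U'$ is a ball, one has $N\cong\mathbb{S}^n$ with $g_N=g_{\mathbb{S}^n}$, and here Antoci's explicit computation pins down the spectrum exactly as the union of these bands; since the eigenvalues are ordered, that union collapses to $[\overline{\lambda^{(j)}},\infty)$ with $\overline{\lambda^{(j)}}=\min\{\lambda_0^{(j)},\lambda_0^{(j-1)}\}$, which is \eqref{firstcompII}.

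For $0<b\le 1$ the cross-sections expand, so the tangential potential terms $\lambda_k/f^2=\lambda_k/r^{2b}\to 0$ along the end; Antoci's computation then shows that the bands merge and $\sigma_{\mathrm{ac}}(H^{(j)}_g)=[0,\infty)$, whence $\sigma_{\mathrm{ac}}(H^{(j)}_{\tilde g})=[0,\infty)$ by the reduction above. The essential spectrum equality in \eqref{firstcompIII} then follows from the general sandwich
\[
[0,\infty)=\sigma_{\mathrm{ac}}\big(H^{(j)}_{\tilde g}\big)\subset\sigma_{\mathrm{ess}}\big(H^{(j)}_{\tilde g}\big)\subset[0,\infty),
\]
where the last inclusion holds because $H^{(j)}_{\tilde g}\ge 0$. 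The main obstacle is to match Antoci's hypotheses to the present warped-product framework and, above all, to justify that in the general case (a) one obtains only the inclusion: gluing the compact core $U$ back onto the end, together with the transmission conditions across $N=\{r=1\}$, may destroy part of the band spectrum but cannot create absolutely continuous spectrum beyond the bands, so that equality can be asserted only when the whole problem is explicitly solvable, as it is for the ball in (b).
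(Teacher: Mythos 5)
Your overall strategy coincides with the paper's: use Proposition \ref{warpconfper} (together with the degreewise splitting of the wave operators as in Corollary \ref{main2}) to obtain $\sigma_{\mathrm{ac}}\big(H^{(j)}_{\tilde g}\big)=\sigma_{\mathrm{ac}}\big(H^{(j)}_{g}\big)$ for every $j$, and then read off the spectrum of the model metric $g$ from Antoci's computations. The "matching of Antoci's hypotheses" that you flag as an obstacle is handled in the paper by the explicit substitution $r=\mathrm{e}^{t}$, under which the end $\Id r^2+r^{2b}g_N$ becomes $\mathrm{e}^{2t}\Id t^2+\mathrm{e}^{2tb}g_N$, the normal form in which the cited results of \cite{antoci1} and \cite{antoci} are stated; also note that in part b) the identification $g_N=g_{\mathbb{S}^n}$ is being taken as part of the hypotheses (it does not follow from $U'$ being merely diffeomorphic to a ball), both in your write-up and implicitly in the corollary.

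The one genuine gap is your justification of the inclusion \eqref{firstcomp} in part a). You decouple the end operator into half-line operators $-\partial_r^2+\lambda_k$ and then assert that gluing the compact core $U$ back on "cannot create absolutely continuous spectrum beyond the bands." As a statement about absolutely continuous spectra, this cannot be taken for granted: stability of $\sigma_{\mathrm{ac}}$ under geometric modifications is exactly the hard problem this paper is devoted to, and there is no general principle asserting that a compactly supported modification cannot enlarge the ac spectrum. What \emph{is} stable under such modifications is the essential spectrum, and this is the paper's (one-line) mechanism: $\sigma_{\mathrm{ac}}\big(H^{(j)}_{g}\big)\subset\sigma_{\mathrm{ess}}\big(H^{(j)}_{g}\big)$, and $\sigma_{\mathrm{ess}}\big(H^{(j)}_{g}\big)$ equals the union of the bands $[\lambda_k^{(j)},\infty)\cup[\lambda_k^{(j-1)},\infty)$ by Antoci's essential-spectrum computation (equivalently, by the decomposition principle applied to the decoupled end operator). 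Replacing your gluing heuristic by this inclusion closes part a). Your derivation of \eqref{firstcompIII} from $\sigma_{\mathrm{ac}}=[0,\infty)$ via the sandwich $[0,\infty)=\sigma_{\mathrm{ac}}\subset\sigma_{\mathrm{ess}}\subset[0,\infty)$ is correct and is a legitimate, slightly more self-contained alternative to citing Antoci's essential-spectrum result for that case.
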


\begin{proof}
By Proposition \ref{warpconfper} we know that $\sigma_{\mathrm{ac}}\big(H^{(j)}_{\tilde{g}}\big)=\sigma_{ac}\big(H^{(j)}_{g}\big)$. Consider a diffeomorphism of $M$ that, on $[1,\infty)\times N$, is given by $(t,p)=(\mathrm{e}^r,p)$. Through this diffeomorphism we get a metric $g'$, isometric to $g$, such that on $[1,\infty)\times N$ the metric $g\rq{}$ takes the form $\mathrm{e}^{2t}\Id t^2+\mathrm{e}^{2tb}g_N$. Now the assertions follow directly from the results stated on page 251 in \cite{antoci1} and on page 1750-1751 on \cite{antoci}, where the author explicitely calculates the essential spectrum of $g\rq{}$ in the general case, and the absolutely continuous spectrum of $g\rq{}$ in case $U\rq{}$ is the Euclidean ball. For part a) we additionally use that the essential spectrum of a self-adjoint operator always contains its absolutely continuous spectrum.
\end{proof}

\appendix 
\section{Belopol\rq{}skii-Birman theorem}\label{wave}

For the convenience of the reader we cite a variant of the Belopolskii-Birman theorem, which is precisely Theorem XI.13 in \cite{reed}:

\begin{theorem}\label{belo}\emph{(Belopol\rq{}skii-Birman)} For $k=1,2$, let $H_k$ be a self-adjoint operator in a Hilbert space $\IHH_k$, where $E_{H_k}$ denotes the operator valued spectral measure, $\mathcal{Q}_k$ the sesqui-linear form, and $P_{\mathrm{ac}}(H_k)$ the projection onto the absolutely continuous subspace of $\IHH_k$ corresponding to $H_k$. Assume that $I:\IHH_1\to \IHH_2$ is bounded operator which satisfies
\begin{itemize}
\item $I$ has a two-sided bounded inverse
\item For any bounded interval $S\subset \IR$ one has 
\begin{align*}
E_{H_2}(S)(H_2I-IH_1)E_{H_1}(S) &\in \IJJ^1(\IHH_1, \IHH_2),\\
(I^*I-1)E_{H_1}(S) &\in \IJJ^{\infty}(\IHH_1)
\end{align*}
\item either $I\dom(Q_1)=\dom(Q_2)$, or $I\dom(H_1)=\dom(H_2)$. \end{itemize}
Then the wave operators 
$$
W_{\pm}(H_{2},H_1, I)=\slim_{t\to\pm\infty}\mathrm{e}^{\mathrm{i}tH_{2}}I\mathrm{e}^{-\mathrm{i}tH_{1}}P_{\mathrm{ac}}(H_1)
$$
exist and are complete, where completeness means that 
$$
\left(\mathrm{Ker} \: W_{\pm}(H_{2},H_1, I)\right)^{\perp}=\mathrm{Im}\:  P_{\mathrm{ac}}(H_1), \quad\overline{\mathrm{Im} \: W_{\pm}(H_{2},H_1, I)}=\mathrm{Im}\:  P_{\mathrm{ac}}(H_2).
$$
Moreover, $W_{\pm}\big(H_{2},H_1, I\big)$ are partial isometries with inital space $\mathrm{Im} \: P_{\mathrm{ac}}(H_1)$ and final space $\mathrm{Im} \: P_{\mathrm{ac}}(H_2)$.
\end{theorem}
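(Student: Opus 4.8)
The plan is to recognise that the hypotheses are tailored to the Kato--Birman trace-class machinery, so I would prove existence first and deduce completeness from the symmetry $1\leftrightarrow 2$. Since $\mathrm{Im}\:P_{\mathrm{ac}}(H_1)$ is the closed span of the vectors $E_{H_1}(S)\psi$ over bounded intervals $S\subset\IR$, and since $\mathrm{e}^{\mathrm{i}tH_2}I\mathrm{e}^{-\mathrm{i}tH_1}P_{\mathrm{ac}}(H_1)$ is uniformly bounded in $t$ (as $\|I\|<\infty$), it suffices to prove convergence of $\mathrm{e}^{\mathrm{i}tH_2}I\mathrm{e}^{-\mathrm{i}tH_1}E_{H_1}(S)\psi$ as $t\to\pm\infty$ for such vectors. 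I would run Cook's method: the $t$-derivative equals $\mathrm{i}\,\mathrm{e}^{\mathrm{i}tH_2}(H_2 I-I H_1)\mathrm{e}^{-\mathrm{i}tH_1}E_{H_1}(S)\psi$, and the form-domain hypothesis $I\dom(\mathcal{Q}_1)=\dom(\mathcal{Q}_2)$ is exactly what gives $H_2 I-I H_1$ meaning as a sesquilinear form and makes the sandwiched operators appearing in the hypotheses well defined.

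The analytic core is the integrability of this derivative in $t$. I would factor the trace-class operator $E_{H_2}(S')(H_2 I-I H_1)E_{H_1}(S)=A_2^{*}A_1$ into Hilbert--Schmidt pieces $A_1,A_2\in\IJJ^2$, for an auxiliary bounded interval $S'$. The abstract input at the heart of the argument is that, along the absolutely continuous evolution, a Hilbert--Schmidt observable is integrable: for a dense set of a.c. vectors (those with locally bounded spectral density) one has $\int_{\IR}\bigl\|A\,\mathrm{e}^{-\mathrm{i}tH}\phi\bigr\|^2\,\Id t<\infty$, a Plancherel estimate for the a.c. spectral density. Applying this to $A_1$ on the $H_1$-side and to $A_2$ on the $H_2$-side and pairing by Cauchy--Schwarz produces an $\IL^1(\Id t)$ bound on the matrix elements $\bigl(\mathrm{e}^{\mathrm{i}tH_2}\chi,(H_2 I-I H_1)\mathrm{e}^{-\mathrm{i}tH_1}E_{H_1}(S)\psi\bigr)$ for $\chi$ in the range of $E_{H_2}(S')$, whence weak convergence on a dense set; this will be upgraded to strong convergence using the norm convergence established next.

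For completeness and the partial-isometry structure I would exploit the compactness hypothesis. For $\phi\in\mathrm{Im}\:P_{\mathrm{ac}}(H_1)$ one has $\mathrm{e}^{-\mathrm{i}tH_1}\phi\rightharpoonup 0$ by the Riemann--Lebesgue property of a.c. vectors, so the compact operator $(I^{*}I-1)E_{H_1}(S)$ carries it to $0$ in norm; approximating a general a.c. vector by $E_{H_1}(S)$-vectors yields $\|I\mathrm{e}^{-\mathrm{i}tH_1}\phi\|\to\|\phi\|$. Since $\mathrm{e}^{\mathrm{i}tH_2}$ is unitary, this both supplies the convergence of norms needed to pass from weak to strong in the previous step and shows that the resulting $W_{\pm}(H_2,H_1,I)$ are isometric on $\mathrm{Im}\:P_{\mathrm{ac}}(H_1)$, i.e. partial isometries with that initial space. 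The intertwining relation $W_{\pm}\mathrm{e}^{\mathrm{i}sH_1}=\mathrm{e}^{\mathrm{i}sH_2}W_{\pm}$ forces the range into $\mathrm{Im}\:P_{\mathrm{ac}}(H_2)$; running the entire existence argument again with $H_1,H_2$ interchanged and $I$ replaced by its bounded inverse $I^{-1}$ produces $W_{\pm}(H_1,H_2,I^{-1})$, and the identity $I^{-1}I=1$ then identifies these families as mutual inverses on the a.c. subspaces, giving equality of the range with $\mathrm{Im}\:P_{\mathrm{ac}}(H_2)$, that is, completeness.

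The step I expect to be the main obstacle is twofold. First, the integrability estimate together with the weak-to-strong upgrade is the genuine analytic content of the trace-class theory and is not a one-line Cook bound: one must carefully remove the auxiliary cutoff $E_{H_2}(S')$ and control the tails. Second, and more specific to the non-unitary setting, the trace-class hypothesis is stated with $E_{H_2}(S)$ on the left and $E_{H_1}(S)$ on the right, whereas the symmetric argument for $W_{\pm}(H_1,H_2,I^{-1})$ requires the opposite sandwich; taking adjoints produces $I^{*}$ rather than $I^{-1}$, and it is precisely the hypothesis $(I^{*}I-1)E_{H_1}(S)\in\IJJ^{\infty}$ that lets me replace $I^{*}$ by $I^{-1}$ modulo a compact error that is harmless on absolutely continuous subspaces. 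Reconciling $I^{*}$ and $I^{-1}$ in this way is the delicate point; the remainder is a routine adaptation of the isometric ($I^{*}I=1$) Kato--Birman scheme.
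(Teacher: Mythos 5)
First, a structural point: the paper does not prove this statement at all --- Appendix A introduces it with ``we cite a variant of the Belopol'skii--Birman theorem, which is precisely Theorem XI.13 in \cite{reed}'', so it is used as a black box and there is no in-paper proof to compare against. Your proposal is therefore an attempt to reprove a deep cited theorem from scratch. The toolbox you assemble is the right one: Cook's method on the dense set of absolutely continuous vectors with bounded spectral density, factorization of the trace-class sandwich $E_{H_2}(S')(H_2I-IH_1)E_{H_1}(S)=A_2^*A_1$ into Hilbert--Schmidt factors, the Plancherel bound $\int_{\IR}\|A\mathrm{e}^{-\mathrm{i}tH}\phi\|^2\,\mathrm{d}t\leq 2\pi M\|A\|_{\IJJ^2}^2$, the Riemann--Lebesgue property combined with compactness of $(I^*I-1)E_{H_1}(S)$ for the norm identity, and the chain rule together with the $I^*$ versus $I^{-1}$ reconciliation for completeness (the latter is indeed how the paper itself argues in Corollary \ref{cor:bunke}, via Propositions XI.5 and Lemma 2 of \cite{reed}).

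The genuine gap is the step you call the ``weak-to-strong upgrade'', which fails as described, for two separate reasons. First, your Cauchy--Schwarz/Plancherel argument gives convergence of $\bigl(\chi,\mathrm{e}^{\mathrm{i}tH_2}I\mathrm{e}^{-\mathrm{i}tH_1}\phi\bigr)$ only for $\chi$ in a dense subset of $\mathrm{Im}\,P_{\mathrm{ac}}(H_2)$, since the Plancherel bound requires $\chi$ to be absolutely continuous for $H_2$ with bounded density; it says nothing about the component of $u_t:=\mathrm{e}^{\mathrm{i}tH_2}I\mathrm{e}^{-\mathrm{i}tH_1}\phi$ in the singular subspace of $H_2$, i.e. about $\|P_{\mathrm{sing}}(H_2)I\mathrm{e}^{-\mathrm{i}tH_1}\phi\|$, which must tend to zero for strong convergence but is left completely uncontrolled. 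Second, even granting full weak convergence $u_t\rightharpoonup u$, the convergence $\|u_t\|\to\|\phi\|$ does not yield strong convergence: that would require $\|u\|=\|\phi\|$, and a weak limit can lose norm (an orthonormal sequence has constant norm and weak limit $0$). Trying instead to prove a genuine Cauchy estimate from the derivative integral reproduces the same obstruction, because the resulting pairing involves $\mathrm{e}^{-\mathrm{i}\tau H_2}(u_t-u_s)$, a vector not known to be absolutely continuous with bounded density, so the Plancherel bound cannot be applied on the $H_2$ side. This is exactly the hard core of the Kato--Rosenblum/Birman theory; its known resolutions (Rosenblum's lemma with finite-rank approximation in trace norm, Kato smoothness, Birman's local criterion and the invariance principle, which is the route taken in the proof of Theorem XI.13 in \cite{reed}) are genuinely different mechanisms rather than refinements of Cook's method. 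Since your proposal defers precisely these points as ``expected obstacles'' without resolving them, it is a correct map of the terrain but not a proof.
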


\bibliographystyle{alpha}
\bibliography{scattering-refs}

\end{document}